\newtheorem{theorem}{Theorem}[section]
\newtheorem{lemma}[theorem]{Lemma}
\newtheorem{proposition}[theorem]{Proposition}
\newtheorem{corollary}[theorem]{Corollary}
\theoremstyle{definition}
\newtheorem{definition}[theorem]{Definition}
\theoremstyle{remark}
\newtheorem{remark}[theorem]{Remark}
\numberwithin{equation}{section}
\def\la{\lambda}
\def\La{\Lambda}
\def\al{\alpha}
\def\si{\sigma}
\def\e{\varepsilon}
\def\de{\delta}
\def\NN{{\mathbb N}}
\def\RR{{\mathbb R}}
\def\CC{{\mathbb C}}
\def\TT{{\mathbb T}}
\def\Re{{\rm Re}\,}
\def\Im{{\rm Im}\,}
\def\Int{{\rm Int}\,}
\def\codim{{\rm codim}\,}
\def\conv{{\rm conv}\,}
\def\dist{{\rm dist}\,}
\def\mT{\mathcal T}
\def\mA{\mathcal A}
\def\mM{\mathcal M}
\def\mK{\mathcal K}
\def\UU{U}
\def\usim{\smash{\mathop{\sim}\limits^u}}
\def\diag{{\rm diag}\,}
\def\DD{{\mathbb D}}
\begin{document}

\title[Joint numerical ranges and compressions of powers]{Joint numerical ranges and compressions of powers of operators}

\author{Vladimir M\"uller}
\address{Institute of Mathematics,\\
Czech Academy of Sciences,\\
 \v Zitna Str. 25, 115 67 Prague,\\
 Czech Republic}
 
\email{muller@math.cas.cz}

\author{Yuri Tomilov}
\address{Institute of Mathematics,\\
Polish Academy of Sciences,\\
\'Sniadeckich Str. 8,
00-956 Warsaw,\\
 Poland}
\email{ytomilov@impan.pl}

\subjclass{Primary 47A05, 47A10, 47A12; Secondary 47A30, 47A35, 47D03}

\thanks{This work was  partially supported by the NCN grant
 2014/13/B/ST1/03153, by the EU grant  ``AOS'', FP7-PEOPLE-2012-IRSES, No 318910, by grant No. 17-27844S of GA CR and RVO:67985840.}


\begin{abstract}
We identify subsets of the joint numerical range of an operator tuple
in terms of its joint spectrum. This result helps us
to transfer weak convergence
of operator orbits into certain approximation and interpolation properties for powers in the
uniform operator topology. This is a far-reaching  generalization of one of the main results in our recent paper \cite{MullerT}.
Moreover, it yields an essential (but partial) generalization of Bourin's ``pinching'' theorem from \cite{Bourin03}.
It also allows us to revisit
several basic results on joint numerical ranges, provide them with new proofs
and find a number of new results.
\end{abstract}

\maketitle
\section{Introduction}
The theory of joint numerical ranges  is  a developing area of operator theory with several important results obtained in the
last years. The geometric structure of joint numerical ranges has got a considerable attention,
and many properties of numerical ranges have been transferred or appropriately recasted from the setting of a single operator to the framework
of operator  tuples, see e.g. \cite{Binding},  \cite{Li08}-\cite{Li11},  \cite{MullerSt}, \cite{MullerT} and references therein.
At the same time, the relations between  spectrum of an operator tuple and its numerical range remained rather obscure
until very recent time. We are aware of \cite{Wrobel} as the only important contribution to those issues, which moreover was apparently overlooked by the experts.
 Recently in \cite{MullerT}, we have discovered  new spectral inclusion results for operator tuples. Specified for tuples formed
 by powers of a single operator, the results allowed us  to identify the unit circle in the spectrum of a bounded linear
 operator on a Hilbert space in terms of orthogonality and ``quasi-orthogonality'' relations for the operator orbits, see e.g.
  \cite[Theorem 1.1]{MullerT}.
 This constituted an essential generalization of the corresponding results by Arveson \cite{Arveson}, who dealt with unitary operators only.
 Moreover, in \cite{MullerT}, by means of spectral approximations of numerical ranges, we put recent harmonic analysis considerations by Hamdan (\cite{Hamdan})
 into the operator setting and extended them by, in particular, replacing a single orbit in Hamdan's statement by an infinite-dimensional subspace of its orbits. (See below for more on that.)

The present paper brings further insights into relations between spectrum and numerical range for operator tuples,
and uses them to obtain new asymptotic properties of operator orbits under quite general assumptions.
More precisely, we extend, complement and sharpen several main results from \cite{MullerT} on numerical ranges $W(T_1, \dots, T_n)$ of tuples $\mathcal T=(T_1, \dots, T_n)$ of bounded linear operators on a Hilbert space $H$, and in this way obtain essential generalizations of results from \cite{MullerT} on asymptotic behavior of compressions of operator powers. As a consequence, we obtain
a partial generalization of the ``pinching'' theorem by Bourin \cite[Theorem 2.1]{Bourin03} in a much more demanding setting of operator tuples.
For recent applications of \cite[Theorem 2.1]{Bourin03} see \cite{Bourin16}.

One of  the novelties in our approach, stemming from \cite{MullerT}, is that in our studies of geometric properties of operator iterates we rely on the numerical ranges methodology.
It is instructive to note that the condition of orthogonality of  elements from an orbit of $T \in B(H)$ can be rewritten in terms of the joint numerical range of the tuple $\mathcal T=(T, ..., T^n).$
 On the other hand, as we prove below, the joint numerical range $W(\mathcal T)$ contains the interior of the convex hull of the joint spectrum $\sigma(\mathcal T)$ (in spite of the fact that the joint numerical range is in general not convex).
Using inductive arguments, this fact helps us to construct orbits of $T$ with special geometric properties  from the vectors resembling (essential) approximate eigenvectors of $T$.  The constructions are far from being straightforward, and we have to invoke new ideas not present in \cite{MullerT}.
More precisely, our considerations rely on the following spectral inclusion result.

\begin{theorem}\label{main3}
Let $\mathcal T=(T_1,\dots,T_n)\in B(H)^n$. Then
\begin{equation}\label{a1}
\Int  \conv\, \bigl(W_e(\mathcal T)\cup\si_p(\mathcal T)\bigr)\subset W(\mathcal T).
\end{equation}
Moreover, if the tuple $\mathcal T$ is commuting then
\begin{equation}\label{a2}
\Int \conv\, \si(\mathcal T) \subset W(\mathcal T).
\end{equation}
\end{theorem}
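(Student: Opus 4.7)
The strategy is to establish \eqref{a1} first and then deduce \eqref{a2} via a reduction special to the commuting case.

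For \eqref{a1}, fix $\la\in\Int\conv(W_e(\mathcal T)\cup\si_p(\mathcal T))$. A Carathéodory-type argument in $\CC^n\cong\RR^{2n}$ produces finitely many points $\la^{(1)},\dots,\la^{(m)}\in W_e(\mathcal T)\cup\si_p(\mathcal T)$ and weights $t_i>0$ with $\sum_i t_i=1$, $\la=\sum_i t_i\la^{(i)}$, such that already $\la$ lies in the relative interior of $\conv\{\la^{(1)},\dots,\la^{(m)}\}$. For indices $i$ with $\la^{(i)}\in\si_p(\mathcal T)$ I fix a joint unit eigenvector $v_i$. For indices $i$ with $\la^{(i)}\in W_e(\mathcal T)$ I invoke the standard characterization of the essential joint numerical range and choose an orthonormal sequence $(e_k^{(i)})_{k\ge 1}$ tending weakly to $0$ with $\langle T_j e_k^{(i)},e_k^{(i)}\rangle\to\la_j^{(i)}$ for each $j$.

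The heart of the proof is to assemble from these blocks a single unit vector $x$ with $(\langle T_j x,x\rangle)_j=\la$. Weak nullity implies $\langle T_j e_k^{(i)},w\rangle\to 0$ as $k\to\infty$ for every fixed $w$; combined with the finiteness of the list $\{v_i\}$, a diagonal extraction makes all cross terms $\langle T_j e_k^{(i)},e_l^{(i')}\rangle$ (for $i\ne i'$) and $\langle T_j e_k^{(i)},v_{i'}\rangle$ simultaneously arbitrarily small. For $s=(s_1,\dots,s_m)$ in the standard simplex $\Delta$ set $u_i:=v_i$ or $u_i:=e_{k_i}^{(i)}$ according to the type of $\la^{(i)}$, and consider $x(s):=\sum_i\sqrt{s_i}\,u_i$ with the $k_i$ large. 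A direct computation shows the continuous map $F\colon\Delta\to\CC^n$, $F(s):=\bigl(\langle T_j x(s)/\|x(s)\|,x(s)/\|x(s)\|\rangle\bigr)_j$, differs from the affine map $L(s):=\sum_i s_i\la^{(i)}$ by an error that vanishes uniformly on $\Delta$ as the $k_i$ grow. Since $\la=L(t)$ lies in the interior of $L(\Delta)=\conv\{\la^{(i)}\}$, a Brouwer / degree argument applied to $F$ on a small simplex neighbourhood of $t$ produces $s^*$ with $F(s^*)=\la$; the vector $x(s^*)/\|x(s^*)\|$ then witnesses $\la\in W(\mathcal T)$. The main difficulty here is the topological step: one has to use the \emph{interior} hypothesis essentially to turn ``$F$ close to $\la$'' into ``$F$ attains $\la$''.

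For \eqref{a2} it suffices to prove $\conv\si(\mathcal T)\subset\conv(\si_p(\mathcal T)\cup W_e(\mathcal T))$ and then invoke \eqref{a1}. Every extreme point of $\conv\si(\mathcal T)$ lies in $\partial\si(\mathcal T)$ (an interior point of the compact set $\si(\mathcal T)$ cannot be extreme in its convex hull), and in the commuting case $\partial\si(\mathcal T)\subset\si_{ap}(\mathcal T)$ by the standard theory of joint spectra. A weak-compactness dichotomy yields $\si_{ap}(\mathcal T)\subset\si_p(\mathcal T)\cup W_e(\mathcal T)$: any approximate joint eigensequence either has a nonzero weak cluster point, producing a joint eigenvector, or converges weakly to $0$, placing the corresponding point in $W_e(\mathcal T)$ via the characterization used above. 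Krein--Milman completes the argument.
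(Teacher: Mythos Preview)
Your approach to \eqref{a1} has a genuine gap. When more than one of the $\la^{(i)}$ lies in $\si_p(\mathcal T)\setminus W_e(\mathcal T)$, the corresponding joint eigenvectors $v_i,v_{i'}$ need not be even approximately orthogonal, so the cross terms $\langle v_i,v_{i'}\rangle$ and $\langle T_jv_i,v_{i'}\rangle$ are fixed nonzero quantities that do \emph{not} tend to zero as the $k_i$ grow. Hence both $\|x(s)\|^2$ and $\langle T_jx(s),x(s)\rangle$ differ from their diagonal parts $\sum_i s_i$ and $\sum_i s_i\la_j^{(i)}$ by terms of fixed size, and the claimed uniform approximation of $F$ by the affine map $L$ fails. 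One cannot in general reduce to at most one eigenvector: if all $T_j$ are compact then $W_e(\mathcal T)=\{0\}$, and an interior point of $\conv\si_p(\mathcal T)$ will typically require several eigenvalue vertices. This non-orthogonality of eigenvectors is precisely the obstacle the paper surmounts via Zenger's Lemma, which, given linearly independent $u_1,\dots,u_m$ and prescribed weights $\al_i\ge 0$ with $\sum_i\al_i=1$, produces \emph{complex} coefficients $w_i$ such that $\langle w_iu_i,\sum_l w_lu_l\rangle=\al_i$ for each $i$. The paper then combines this with an iterative construction --- building a Cauchy sequence of approximants whose eigenvector component is recalibrated by Zenger at each step and whose $W_e$-component is augmented by vectors chosen in shrinking finite-codimensional subspaces --- rather than a one-shot Brouwer argument.

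Your reduction for \eqref{a2} also contains an error: the inclusion $\partial\si(\mathcal T)\subset\si_{ap}(\mathcal T)$ is false for the Harte spectrum of commuting tuples. Take $\mathcal T=(S,0)$ with $S$ the unilateral shift; then $\si(\mathcal T)=\overline{\DD}\times\{0\}$ has empty interior in $\CC^2$, so $\partial\si(\mathcal T)=\si(\mathcal T)$, whereas $\si_{ap}(\mathcal T)=\TT\times\{0\}$. The paper instead uses Fredholm theory for commuting tuples: $\conv\si_e(\mathcal T)=\conv\si_{\pi e}(\mathcal T)\subset W_e(\mathcal T)$, and $\si(\mathcal T)\setminus\hat\si_e(\mathcal T)$ consists of eigenvalues, giving $\conv\si(\mathcal T)\subset\conv\bigl(W_e(\mathcal T)\cup\si_p(\mathcal T)\bigr)$ directly.
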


Theorem \ref{main3} can be considered as a partial generalization of the main result in \cite[Theorem 2.2]{Wrobel} dealing with
numerical ranges of operators on Banach spaces, and also as a generalization of \cite[Corollaries 4.2 and 4.4]{MullerT}, where $\si_p(\mathcal T)$ was absent in \eqref{a1}, and \eqref{a2} was stated with $\si(\mathcal T)$ replaced by $\si_e(\mathcal T).$ Note that while the result in \cite{Wrobel} allows to
find parts of the spectrum of $\mathcal T$ in $\overline{W(\mathcal T)},$ we may replace $\overline{W(\mathcal T)}$ by a smaller and more transparent set $W(\mathcal T).$ The proof of Theorem \ref{main3} requires new tools, e.g. Zenger's Lemma, and it is technically more demanding than the corresponding arguments in \cite{MullerT}.

To present our applications of Theorem \ref{main3} (or rather its predecessor from \cite{MullerT}), let us recall that,
motivated by applications in ergodic theory, Hamdan characterized in \cite{Hamdan} the size of the spectrum of some unitary operators by a new type of asymptotic assumptions. He proved  that if a unitary operator $T$ on $H$ is such that $T^n \to 0$ in the weak operator topology,
then  $\sigma(T)=\mathbb T$  if and only if for every $\epsilon >0$ there exists a unit vector $x \in H$
satisfying
\begin{equation}\label{hamda}
\sup_{n \ge 1} |\langle T^n x,x \rangle| <\epsilon.
\end{equation}
His result has been extended in \cite{MullerT} to general bounded operators and to the setting allowing to take $x$'s in \eqref{hamda} from an infinite-dimensional subspace. Namely, we proved  in \cite[Corollary 6.3 and Remark 6.4]{MullerT} that
if $T$ is a bounded linear operator on $H$  such that $T^n \to 0$ in the weak operator topology, and
$\sigma(T) \supset \mathbb T,$ then for every $\epsilon >0$ one can find an infinite-dimensional subspace $L$ of $H$ such that
the compressions $(T^n)_L$ of $T^n$ to $L$ are asymptotically small in two senses:
$$
\lim_{n \to \infty} \|(T^n)_L\|=0 \qquad \text{and} \qquad \sup_{n \ge 1} \|(T^n)_L\|<\epsilon.
$$
This result can be generalized as follows.
\begin{theorem}\label{hamdanintro}
Let $T\in B(H)$ be such that $T^n\to 0$ in the weak operator topology, and let  $\si(T)\supset\TT$. Let  $\tilde C$ be a strict contraction on a separable Hilbert space, i.e., $\|\tilde C\|<1$. Then for every $\e>0$ there exists a subspace $L\subset H$ and $C\in B(L)$ unitarily equivalent to $\tilde C$ such that
\begin{equation}\label{asymp}
\lim_{n\to\infty} \|(T^n)_L- C^n\|=0\qquad\hbox{and}\qquad
\sup_{n \ge 1} \|(T^n)_L- C^n\|\le \e.
\end{equation}
\end{theorem}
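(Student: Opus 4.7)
The plan is to construct inductively an orthonormal sequence $(x_k)_{k \ge 1} \subset H$ whose matrix $\bigl(\langle T^n x_j, x_i\rangle\bigr)_{i,j}$ approximates the matrix $\bigl(\langle \tilde C^n e_j, e_i\rangle\bigr)_{i,j}$ of $\tilde C^n$ in a fixed orthonormal basis $(e_k)$ of the separable space on which $\tilde C$ acts. Setting $L = \overline{\mathrm{span}}\{x_k\}$ and defining $C \in B(L)$ via the unitary $x_k \mapsto e_k$, the operator norm of $(T^n)_L - C^n$ equals that of the error matrix, and a Schur-type estimate on the latter will give both claims in \eqref{asymp}.

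First I observe that $T$ cannot have unimodular eigenvalues, since an eigenvector $v$ for $\la \in \TT$ satisfies $\langle T^n v, v\rangle = \la^n \|v\|^2$, contradicting $T^n \to 0$ in the weak operator topology. Hence every $\la \in \sigma(T) \cap \TT$ must lie in $\sigma_e(T)$, so $\TT \subset \sigma_e(T)$, and for each $N$ the joint essential numerical range of the commuting tuple $\mT_N = (T, T^2, \dots, T^N)$, as well as of its compressions to finite-codimensional subspaces, contains the moment curve $\Gamma_N = \{(\la, \dots, \la^N) : \la \in \TT\}$.

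At stage $k$ of the induction, I choose $N_k \uparrow \infty$ and $\de_k \downarrow 0$ suitably, set $V_k = \mathrm{span}\{x_j, T^m x_j, (T^*)^m x_j : j < k,\, m \le N_k\}$, and seek $x_k = y + z$ with $y \in V_k$, $z \in V_k^\perp$, $\|x_k\| = 1$. The orthogonality $x_k \perp x_j$ together with the off-diagonal matching conditions $\langle T^n x_k, x_j\rangle \approx \langle\tilde C^n e_k, e_j\rangle$ and $\langle T^n x_j, x_k\rangle \approx \langle\tilde C^n e_j, e_k\rangle$ for $j < k$, $n \le N_k$ translate (using $(T^{*})^n x_j, T^n x_j \in V_k$) into a finite linear system for $y$ whose targets have modulus at most $\|\tilde C\|^n$; the system is solvable with $\|y\|$ controlled by $\|\tilde C\|$ and the Gram matrix of the constraint vectors (the latter can be kept well-conditioned by inductively choosing the $x_j$'s as approximate eigenvectors of $T$ for distinct points of $\TT$). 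With $y$ fixed, the diagonal condition reduces, modulo cross-term errors absorbed into $\de_k$, to a joint numerical range requirement for a unit $z \in V_k^\perp$. The target $\bigl(\langle \tilde C^n e_k, e_k\rangle\bigr)_{n \le N_k}$ is the truncation of the Fourier coefficients of the spectral measure $\mu_k$ at $e_k$ of a unitary dilation of $\tilde C$; since $\|\tilde C\| < 1$ strictly forces $\langle \tilde C^n e_k, e_k\rangle \to 0$, the measure $\mu_k$ cannot be purely atomic (otherwise this would be a nonvanishing almost-periodic sequence converging to zero), hence $\mu_k$ has a continuous component and the target lies in $\Int \conv \Gamma_{N_k}$. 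Theorem \ref{main3} applied to the compressed tuple $\bigl((T^n)_{V_k^\perp}\bigr)_{n=1}^{N_k}$ then delivers the required $z$.

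The main difficulty is the joint treatment of the linear off-diagonal and the numerical range diagonal constraints at each stage, and choosing $N_k$, $\de_k$ so that the cross-term errors and the overall Schur norm remain within tolerance. The strict contraction hypothesis $\|\tilde C\| < 1$ is essential on both fronts: it makes $\|y\|$ controllable through the geometric decay of targets bounded by $\|\tilde C\|^n$, and it places the diagonal target inside $\Int \conv \Gamma_{N_k}$ so that Theorem \ref{main3} applies. Given the sequence $(x_k)$, the uniform bound $\sup_n \|(T^n)_L - C^n\| \le \e$ follows from a Schur estimate on the error matrix, while $\|(T^n)_L - C^n\| \to 0$ follows from $\|\tilde C^n\| \to 0$ together with the weak convergence $T^n \to 0$ and uniform tail control built into the parameter choices.
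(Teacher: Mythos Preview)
Your proposal has a genuine gap in the off-diagonal step. You want, at stage $k$, to choose $y\in V_k$ so that
\[
\langle y,(T^*)^n x_j\rangle=\langle\tilde C^n e_k,e_j\rangle,\qquad
\langle y,T^n x_j\rangle=\langle\tilde C^n e_j,e_k\rangle
\]
for all $j<k$ and all $n\le N_k$, and you suggest keeping the Gram matrix of the constraint vectors well-conditioned by taking the $x_j$ to be approximate eigenvectors of $T$ at distinct points $\la_j\in\TT$. But this choice makes the system \emph{worse}, not better: if $x_j$ is an approximate unit eigenvector for $\la_j$, then $T^n x_j\approx\la_j^n x_j$ and $(T^*)^n x_j\approx\bar\la_j^{\,n}x_j$, so for each fixed $j$ the whole family $\{T^n x_j,(T^*)^n x_j:n\le N_k\}$ collapses to essentially one direction. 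The resulting linear system is then consistent only if, up to small errors,
\[
\langle\tilde C^n e_j,e_k\rangle\approx c_j\,\la_j^{\,n}\quad\text{and}\quad
\langle\tilde C^n e_k,e_j\rangle\approx c'_j\,\bar\la_j^{\,n}
\]
for some constants $c_j,c'_j$ and \emph{all} $n\le N_k$. For a general strict contraction $\tilde C$ there is no reason for the off-diagonal entries to behave like a single geometric sequence, so the system is typically inconsistent and no $y$ of controlled norm exists. (There is a secondary issue: even when $y$ is found, $T^n y$ need not lie in $V_k$ for $n\le N_k$, so the cross terms $\langle T^n y,z\rangle$ do not vanish and cannot simply be ``absorbed into $\de_k$'' without further work.)

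The paper avoids this obstacle by never attempting to match nonzero off-diagonal entries. It first dilates $\tilde C$ to $c\tilde U$ with $\tilde U$ a bilateral shift of infinite multiplicity, then approximates $\tilde U$ uniformly by a \emph{diagonal} unitary $\tilde D$ with finite spectrum, so that $\sup_n\|(c\tilde U)^n-(c\tilde D)^n\|$ is small. For the diagonal target $c\tilde D$ one only needs $\langle T^n e_k,e_k\rangle\approx\la_k^n$ and $\langle T^n e_k,e_{k'}\rangle\approx 0$ for $k\ne k'$; this is achieved by a single-vector lemma (an extension of the $\la=0$ case) producing, in any finite-codimensional subspace, a unit vector $x$ with $\sup_n|\langle(T^n-\la^n)x,x\rangle|$ and $\sup_n|\langle T^{\pm n}x,a\rangle|$ small for finitely many prescribed $a$. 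Your diagonal argument via the spectral measure and Theorem~\ref{main3} is in the right spirit, but the reduction to the diagonal case is the missing idea that makes the construction go through.
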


It is natural to ask whether the asymptotic relation \eqref{asymp} above can be made an exact equality. Surprisingly, the answer is ``yes'', if
one restricts oneself to a finite piece of the orbit $(C^n)_{n \ge 1}$. In particular, the following theorem holds.
\begin{theorem}\label{bourinintro}
Let $T\in B(H)$ and suppose that the polynomial hull $\hat\sigma(T)$ of $\sigma(T)$ contains the unit disc $\DD$. Let $n \in\NN$ and let $\tilde C$ be a strict contraction on a separable Hilbert space. Then there exists a subspace $L\subset H$ and $C\in B(L)$ unitarily equivalent to $\tilde C$ such that $(T^k)_L=C^k$ for $k=1,\dots,n$.
\end{theorem}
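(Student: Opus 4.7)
My plan is to construct, by induction, an orthonormal sequence $\{f_k\}\subset H$ indexed by a fixed orthonormal basis $\{e_k\}$ of the Hilbert space on which $\tilde C$ acts, so that $\langle T^j f_k,f_l\rangle=\langle\tilde C^j e_k,e_l\rangle$ for all $k,l$ and every $j=1,\dots,n$. Setting $L=\overline{\mathrm{span}}\{f_k\}$ and transporting $\tilde C$ via the unitary identification $e_k\mapsto f_k$ to obtain $C\in B(L)$ then automatically yields $(T^j)_L=C^j$ for $j=1,\dots,n$, as required.

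For the inductive step, suppose $f_1,\dots,f_m$ have been constructed. I write $f_{m+1}=h+g$, where $h$ is the unique vector in the finite-dimensional subspace $V=\mathrm{span}\{f_k,T^jf_k,(T^j)^*f_k:k\le m,\,j\le n\}$ enforced by the linear constraints (orthogonality $f_{m+1}\perp f_k$ and the prescribed off-diagonal values $\langle T^jf_{m+1},f_k\rangle$ and $\langle T^jf_k,f_{m+1}\rangle$), while $g\in V^\perp$ is free. Enlarging $V$ to a slightly bigger finite-dimensional $V'$ that also absorbs $T^jh$ and $(T^j)^*h$ for $j\le n$, the remaining conditions (norm one and the prescribed diagonal values $\langle T^jf_{m+1},f_{m+1}\rangle=\langle\tilde C^je_{m+1},e_{m+1}\rangle$) collapse, for $g\in(V')^\perp$, to $\|g\|^2=1-\|h\|^2$ and $\langle T^jg,g\rangle=\gamma_j$ with an explicit $(\gamma_1,\dots,\gamma_n)\in\CC^n$. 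After normalizing $\tilde g=g/\|g\|$, this becomes the single-vector problem of realizing the point $\bigl(\gamma_j/(1-\|h\|^2)\bigr)_{j=1}^n$ in the joint numerical range of the compressed tuple $\mathcal T|_{(V')^\perp}$, where $\mathcal T=(T,T^2,\dots,T^n)$. Theorem~\ref{main3} supplies such a unit vector provided the point lies in $\Int\conv\bigl(W_e(\mathcal T|_{(V')^\perp})\cup\si_p(\mathcal T|_{(V')^\perp})\bigr)$; since $V'$ is finite-dimensional, $W_e(\mathcal T|_{(V')^\perp})=W_e(\mathcal T)$, and using the inclusion $\si(T)\subset\si_e(T)\cup\si_p(T)$ (isolated eigenvalues being captured by the point spectrum) together with the hypothesis $\hat\si(T)\supset\DD$, a Hahn-Banach/Riesz-representation argument realizes each $(z_0,z_0^2,\dots,z_0^n)$, $z_0\in\overline\DD$, as $\int(z,z^2,\dots,z^n)\,d\mu(z)$ against a probability measure on $\si_e(T)\cup\si_p(T)$. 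The full moment curve over $\overline\DD$ therefore lies in the closed convex hull of $W_e(\mathcal T)\cup\si_p(\mathcal T|_{(V')^\perp})$, and in its interior for $|z_0|<1$ by positive definiteness of the associated Toeplitz matrix.

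The main obstacle is controlling the normalization: at every stage of the induction one must keep $\|h\|<1$ uniformly \emph{and} ensure that the rescaled target $\bigl(\gamma_j/(1-\|h\|^2)\bigr)$ remains inside the interior where Theorem~\ref{main3} delivers a vector. This is precisely where the strict inequality $\|\tilde C\|<1$ is consumed: it provides the slack to enumerate the basis $\{e_k\}$ judiciously --- for instance via a Schur-type upper triangulation of $\tilde C$, or by preparing the construction for a slightly rescaled $\lambda\tilde C$ with $\lambda$ just below $1$ --- so that the off-diagonal cross-entries, and hence $h$, stay bounded away from the unit sphere at every inductive stage. Once this bookkeeping is arranged, the induction closes and produces the desired subspace $L$ and operator $C$.
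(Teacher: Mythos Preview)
Your direct inductive construction is a natural idea, but the gap you yourself flag as the ``main obstacle'' is not closed by the fixes you suggest, and it is a genuine obstruction. At step $m+1$ the component $h\in V$ is determined by linear constraints $\langle h,v\rangle=\beta_v$ for $v$ ranging over the spanning set $\{f_k,T^jf_k,(T^j)^*f_k:k\le m,\ j\le n\}$; the minimal-norm solution satisfies $\|h\|^2=\beta^*G^{-1}\beta$, where $G$ is the Gram matrix of that set. The data $\beta$ are indeed controlled by $\|\tilde C\|<1$, but $\|G^{-1}\|$ depends only on $T$ and on the previously constructed $f_1,\dots,f_m$, not on $\tilde C$. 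No reordering of the basis of $\tilde C$---Schur or otherwise, and a general bounded operator on an infinite-dimensional space need not admit a Schur triangulation anyway---controls the conditioning of $G$, so nothing prevents $\|h\|\ge 1$, nor keeps the perturbed target $\bigl(\gamma_j/(1-\|h\|^2)\bigr)_j$ inside $\Int W_e(\mT)$. (Your detour through $\si_p(\mT|_{(V')^\perp})$ has a separate issue: compressing to a finite-codimensional subspace can destroy point spectrum, so that set need not contain $\si_p(\mT)$; in any case Theorem~\ref{lambdawe} already places the whole moment curve over $\DD$ in $\Int W_e(\mT)$, making the point-spectrum argument unnecessary.)

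The paper sidesteps the obstacle by a different route. It first reduces, via power dilation, to the case $\tilde C=cU$ with $U$ unitary and $c<1$. The Weyl--von Neumann theorem then writes $cU=D+K$ with $D$ diagonal (eigenvalues on $|z|=c$) and $K$ compact of arbitrarily small norm, and one expresses the tuple $(cU,\dots,(cU)^n)$ as a \emph{convex combination} of diagonal $n$-tuples, each with diagonal entries in $W_\infty(\mT)$: the main term is a slight dilation of $(D,\dots,D^n)$, and the remaining terms absorb the real and imaginary parts of each $K_j=(cU)^j-D^j$ and have entries near $0\in\Int W_e(\mT)$. Finally a Pokrzywa-type compression lemma (Lemma~\ref{pokrzywa} and Proposition~\ref{propm}) shows that any element of $\conv\mM(W_\infty(\mT))$ is unitarily equivalent to a compression of $\mT$. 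The reduction to the normal (indeed diagonal) case is precisely what makes the off-diagonal bookkeeping that defeats your approach disappear.
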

The question when it is possible to obtain the equality $(T^k)_L=C^k$
for all $k\in\NN$ (i.e., when $T$ is a dilation of $C$) was studied by
using the Scott Brown technique. In particular, in \cite[Theorem 4.8]{Bercovici} a positive result
was obtained for so called BCP-operators (contractions with dominant
essential spectrum).

Despite the main motivation for the paper was to understand how far Hamdan's type results can be pushed by operator-theoretical technique,
as a byproduct of our approach we found new arguments for the proofs of recent characterizations of essential and infinite numerical ranges,
 as well as several new statements concerning numerical ranges which are of independent interest.
 Recall that the infinite numerical range $W_\infty(\mathcal T)$ of  a tuple $\mathcal T=(T_1, \dots, T_n)$ can be defined as
 $$
W_\infty(\mathcal T):=\bigl\{(\lambda_1, \dots, \lambda_n)\in \mathbb C^n: P T_jP =\la_jP, \quad j=1,\dots,n\bigr\}
$$
 for some infinite rank projection $P.$ We prove that the essential numerical range $W_e(\mT)$ of $\mathcal T$ can be described
 in terms of $W_\infty (\mathcal T)$ as
 $$
 W_e(\mT)=\bigcup_{\mK\in\mK(H)^n} W_\infty(\mT-\mK).
 $$
 Moreover there exists an $n$-tuple $\mK$ of trace-class operators on $H$ such that
$W_e(\mT)=\overline{W_\infty(\mT-\mK)}.$
 We also show that for every tuple $\mathcal T \in B(H)^n$ of bounded linear operators on $H$ one has
$$
\overline{\rm conv}\, W(\mathcal T) = {\rm conv}\,\bigl(W(\mathcal T)\cup W_e(\mathcal T)\bigr),
$$
and
$$
\Int \overline {W(\mathcal T)}\subset W(\mathcal T)
$$
if $\Int W_e(T)\ne\emptyset$.

\section{Notation}\label{nota}
It will be convenient to fix some notations in a separate section. In particular, we let
$H$ be a Hilbert space with the inner product $\langle\cdot ,\cdot \rangle,$ and $B(H)$ the space of all bounded linear operators on $H$.
For a bounded linear operator $T$ we denote by $\sigma (T)$ its spectrum, and by $N(T)$ its kernel.


In the following we consider an $n$-tuple $\mathcal T=(T_1,\dots,T_n)\in B(H)^n$. Note that we do not in general
assume that the operators $T_j$ commute. For $x,y\in H$ we write shortly $\langle \mathcal Tx,y\rangle= (\langle T_1x,y\rangle,\dots,\langle T_nx,y\rangle)\in\CC^n$ and ${\mathcal T}x=(T_1x,\dots,T_nx)\in H^n$.
Similarly for $\la=(\la_1,\dots,\la_n)\in\CC^n$
we write $\mathcal T-\la=(T_1-\la_1,\dots,T-\la_n)$ and $\|\la\|=\max\{|\la_1|,\dots,|\la_n|\}$.
If  $\mT=(T_1,\dots, T_n)\in B(H)^n$ and $R, S \in B(H)$ then
\begin{equation}\label{multip}
 R \mT  S :=(R T_1 S, \dots, R T_n S).
\end{equation}
Thus, in particular, if $\mT \in B(H)^n$  and $P_M$ is the orthogonal projection from $H$ onto $M$ then
$P_M \mT P_M=(P_M T_1 P_M, \dots, P_M T_n P_M).$

For a closed set $K \subset \mathbb C^n $ we denote by $\partial K$ the topological boundary of $K$,  by ${\rm Int} \, K$ the interior of $K,$
by ${\rm conv}\, K$ the convex hull of $K,$ and by $\widehat K$
the polynomial hull of $K$. Recall that if $K\subset\CC$ then $\widehat K$ is the union of $K$ with all bounded components of the complement $\CC\setminus K$.

Finally, we let $\mathbb T$ stand for the unit circle $\{\lambda \in \mathbb C: |\lambda|=1\}$, $\mathbb D$ for the unit disc $\{\lambda\in\CC: |\lambda|<1\}$ and $\mathbb R_+=[0,\infty).$

\section{Preliminaries}
We start with recalling certain basic notions and facts from the spectral theory of operator tuples on Hilbert spaces.
They can be found e.g. in \cite[Chapters 2-3]{Muller}.

Let $\mathcal T=(T_1,\dots,T_n)\in B(H)^n$ be an $n$-tuple of commuting operators.
Recall that  its  joint (Harte) spectrum $\sigma(\mathcal T)$ can be defined as the complement of the set of those $\lambda=(\la_1,\dots,\la_n) \in\mathbb C^n$ for which
$$
\sum_{j=1}^n L_j(T_j-\lambda_j)=\sum_{j=1}^{n}(T_j-\lambda_j)R_j=I
$$
for some $L_j, R_j, 1 \le j \le n,$ from the algebra $B(H).$
There are two  particularly useful subsets of $\sigma(\mathcal T).$
 The first one, the joint essential spectrum $\si_e(\mathcal T)$ of $\mathcal T,$ can be defined  as the (Harte) spectrum of the $n$-tuple $\pi(\mathcal T):=(\pi(T_1),\dots,\pi (T_n))$ in the Calkin algebra $B(H)/{\mathcal K}(H)$, where ${\mathcal K}(H)$ denotes the ideal of all compact operators on $H$, and $\pi: B(H) \to B(H)/{\mathcal K}(H)$
stands for the quotient map.
The second one, the essential approximate spectrum $\si_{\pi e}(\mathcal T)$ of $\mathcal T$ consists of all $\la=(\la_1,\dots,\la_n)\in\CC^n$ such that there exists an orthonormal sequence $(x_k)_{k \ge 1}\subset H$ satisfying
$$\sum_{j=1}^n\|(T_j-\la_j)x\|=0.$$
 It is easy to show that $\si_{\pi e}(\mathcal T)\subset \si_{e}(\mathcal T).$
Note that if $n=1$ then
$\si_{e}(T_1)=\{\la_1\in\CC: T_1-\la_1\hbox{ is not Fredholm}\},$
and
for $T\in B(H)$ and $\mathcal T=(T,T^2, \dots, T^n) \in B(H)^n,$ one has $\sigma (\mathcal T)=\{(\lambda, \dots, \lambda^n): \lambda \in \sigma (T)\},$  where $\sigma$ can be replaced by either
$\sigma_e$ or $\sigma_{\pi e}.$
It is well-known that $\sigma({\mathcal T})$ and  $\sigma_e({\mathcal T})$ are non-empty compact subsets of $\mathbb C^n,$
while  $\sigma_{\pi e}({\mathcal T})$ can be empty even if $n=1.$ Basic facts on essential spectra of operator tuples can be found
in \cite[Chapter III.19]{Muller}.

For  not necessarily commuting $n$-tuple $\mathcal T$
denote by $\si_p(\mathcal T)$ the point spectrum of $\mathcal T$, i.e., the set of all $n$-tuples $\lambda=(\la_1,\dots,\la_n)\in\CC^n$ such that $\bigcap_{j=1}^n N(T_j-\la_j)\ne\{0\}$. If $ x \in \bigcap_{j=1}^n N(T_j-\la_j)$ then
we will write $\mT x=\lambda x.$ Remark, however, that in fact we will not need a somewhat cumbersome spectral theory of non-commuting operator tuples.

As in the case of a single operator, it is often useful to relate $\sigma(\mathcal T)$ to a larger and more easily computable set $W(\mathcal T)\subset \mathbb C^n$ called the  joint numerical
range of $\mathcal T$ and defined as
$$
W(\mathcal T)=\bigl\{(\langle T_1 x, x\rangle , ..., \langle T_n x, x \rangle) : x \in H, \|x\|=1\bigr\}.
$$
The set $W(\mathcal T)$ can be identified with a subset of $\mathbb R^{2n}$ if one identifies the $n$-tuple $\mathcal T$ with the $2n$-tuple
$({\rm Re}\, T_1, {\rm Im}\, T_1, ..., {\rm Re}\, T_n, {\rm Im}\, T_n)$ of selfadjoint operators.
Unfortunately, if  $n>1,$ then $W(\mathcal T)$ is not in general convex, see e.g. \cite{Li09}.

As in the spectral theory, there is also a notion of the joint essential numerical range  $W_e(\mathcal T)$  associated to $\mathcal T.$
For $\mathcal T=(T_1,\dots,T_n)\in B(H)^n$ we define  $W_e(\mathcal T)$ as the set of all $n$-tuples
$\la=(\la_1,\dots,\la_n)\in\CC^n$ such that there exists an orthonormal sequence $(x_k)_{k \ge 1}\subset H$ with
$$
\lim_{k\to\infty}\langle  T_j x_k,  x_k\rangle=\la_j, \qquad j=1,\dots,n.
$$
Alternatively, $W_e(\mathcal T)$ can be defined as
$$
W_e(\mathcal T):= \bigcap \overline{W(T_1 + K_1,\dots,T_n+K_n)}
$$
where the intersection is taken over all $n$-tuples $K_1,\dots,K_n$ of compact operators on $H.$
Recall that $W_e(\mathcal T)$ is a nonempty, compact and, in contrast to $W(\mathcal T),$ \emph{convex} subset of $\overline{W(\mathcal T)}$, see \cite{Bercov} or \cite{Li09}.
Note that as a straightforward consequence of the definitions above, if the $n$-tuple $\mathcal T \in B(H)^n$ is commuting then  $\si_{\pi e}(\mathcal T)\subset W_e(\mathcal T).$ Then the convexity of $W_e(\mathcal T)$ implies that $\conv \si_e(T) \subset W_e(T),$ see the proof of Corollary \ref{spectrum} below.

There's also a useful and related notion of the numerical range for tuples of elements of a unital Banach algebra $\mathcal A.$
For $a=(a_1, ..., a_n)\in \mathcal A^n$ define
\begin{equation}\label{brange}
V(a, \mathcal A):=\bigl\{(f(a_1), \dots, f(a_n)): f\in \mathcal A^*, f(1)=\|f\|=1\bigr\},
\end{equation}
and recall that $f \in A^*$ such that $f(1)=\|f\|=1$ are called states.
With such a definition, one has
\begin{equation}\label{calkin}
\overline{\rm conv}\, W(\mathcal T)=V(\mathcal T, B(H))
\end{equation}
and
\begin{equation}\label{calkin1}
W_e(\mathcal T)=V\bigl(\pi(\mathcal T), B(H)/\mathcal K(H)\bigr).
\end{equation}
The proofs of \eqref{calkin} and \eqref{calkin1} can be found  in \cite[Theorem 1 and Theorem 2]{MullerSt}, respectively.
For a comprehensive account of joint essential numerical ranges one may consult \cite{Li09}.
Very recently,  several geometric properties of joint essential numerical ranges
(as e.g. convexity) were extended in \cite{Lau} to the setting of joint matricial essential
ranges.

The next result due to Zenger is used in a number of operator-theoretical constructions. Its proof can be found  e.g. in \cite[p. 18-20]{Bonsall}.
\begin{lemma}[(Zenger's Lemma)]
Let $u_1, \dots , u_n\in H$ be linearly independent, and let $\alpha_1, \dots , \alpha_n \in \mathbb R_+$ be such that
$\sum_{k=1}^n \alpha_k=1.$ Then there exist $ w_1, \dots , w_n \in \mathbb C$ and $u \in H, \|u\| \le 1,$
satisfying $\|\sum_{j=1}^n w_ju_j\|\le 1$ and
$$
\langle w_j u_j, u \rangle=\alpha_j, \qquad 1 \le j \le n.
$$
\end{lemma}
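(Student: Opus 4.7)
The plan is to realize $w_1,\dots,w_n$ as maximizers of the product $\prod_j |w_j|^{\al_j}$ subject to $\|\sum_j w_j u_j\|\le 1$, and then take $u=\sum_j w_j u_j$; the first-order conditions at the maximum will deliver the required inner-product identities. One may first reduce to the case $\al_j>0$ for all $j$: if some $\al_j=0$, simply set the corresponding $w_j=0$, so that $\langle w_j u_j,u\rangle=0=\al_j$ holds trivially, and apply the argument to the remaining, still linearly independent, subsystem.

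Since the $u_j$ are linearly independent, $w\mapsto \|\sum_j w_j u_j\|$ is a norm on $\CC^n$; hence $K:=\{w\in\CC^n:\|\sum_j w_j u_j\|\le 1\}$ is compact. The continuous function $f(w):=\prod_j |w_j|^{\al_j}$ is strictly positive whenever all $w_j\ne 0$, so it attains a positive maximum on $K$ at some $w^*=(w_1^*,\dots,w_n^*)$. Then $w_j^*\ne 0$ for all $j$ (otherwise $f(w^*)=0$), and, writing $v=\sum_j w_j^* u_j$, one has $\|v\|=1$: otherwise the rescaling $(1+\e)w^*$ would remain in $K$ for small $\e>0$ and would multiply $f$ by $1+\e$, contradicting maximality.

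To deduce $\langle w_j^* u_j,v\rangle=\al_j$, observe that $\tilde F(w):=\prod_j |w_j|^{2\al_j}\bigl/\|\sum_j w_j u_j\|^2$ is real-homogeneous of degree $0$, so $w^*$ is in fact an \emph{unconstrained} maximum of $\tilde F$ on the open subset of $\CC^n$ where all coordinates and the sum $\sum_j w_j u_j$ are nonzero. Taking the Wirtinger derivative $\partial/\partial \bar w_\ell$ of $\log\tilde F(w)=\sum_j \al_j\log|w_j|^2-\log\|\sum_j w_j u_j\|^2$ and evaluating at $w^*$ (using $\|v\|=1$) yields
\[
\frac{\al_\ell}{\overline{w_\ell^*}}=\langle v,u_\ell\rangle,
\]
equivalently $\overline{\langle w_\ell^* u_\ell,v\rangle}=\al_\ell$. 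Since $\al_\ell$ is real, $\langle w_\ell^* u_\ell,v\rangle=\al_\ell$, and setting $u=v$ (with $\|u\|=1$) completes the proof.

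The main technical point is the first-order analysis: one must stay in the open set where $f$ is smooth (all $w_j\ne 0$), and the degree-zero trick combined with Wirtinger calculus is the cleanest way to avoid introducing Lagrange multipliers on the constraint surface $\{\|v\|=1\}$ directly. A purely real-variable argument (parametrizing $w_\ell=r_\ell e^{i\theta_\ell}$ and varying amplitudes and phases separately) is equally valid but more tedious.
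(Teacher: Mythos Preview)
Your proof is correct and is essentially the classical argument. The paper itself does not prove Zenger's Lemma but cites Bonsall--Duncan \cite{Bonsall}, where the proof proceeds by exactly the optimization you carry out: maximize $\prod_j|w_j|^{\alpha_j}$ over the unit ball of the norm $w\mapsto\|\sum_j w_j u_j\|$, and read off the identities $\langle w_j u_j,u\rangle=\alpha_j$ from the first-order conditions at the maximizer with $u=\sum_j w_j u_j$.
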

Note that $\bigl\langle\sum_{j=1}^nw_ju_j,u\bigr\rangle=1$, and so $u=\sum_{j=1}^nw_ju_j$.

\section{Spectra and numerical ranges for tuples}\label{numersection}

In the following we consider an $n$-tuple $\mathcal T=(T_1,\dots,T_n)\in B(H)^n$. Note that we do not assume that the operators $T_j$ commute.

It was proved in \cite[Corollary 4.2]{MullerT} that
\begin{equation}\label{we}
\Int \, W_e(\mathcal T) \subset W(\mathcal T).
\end{equation}
Moreover,  if $\la=(\la_1, \dots, \la_n)\in\Int \, W_e(\mathcal T),$
then there exists an infinite-dimensional subspace $L$ of $H$ such that
\begin{equation}\label{ep}
P_LT_jP_L=\la_jP_L, \qquad j=1,\dots,n,
\end{equation}
where $P_L$ is the orthogonal projection on $L.$
So, despite $T_j, 1 \le j \le n,$ do not commute, in a number of situations of interest they have
diagonal compressions to the same subspace. Moreover, \eqref{we} has important spectral consequences. It was shown in \cite[Corollary 4.4]{MullerT}
that
\begin{equation}\label{sp}
\Int\,\conv\, \sigma_e(\mathcal T)\subset W(\mathcal T).
\end{equation}

On the other hand, \eqref{we} has certain drawbacks. For instance, if one of the operators $T_j, 1 \le j \le n,$ is compact then
$\Int W_e(T_1, \dots, T_n)=\emptyset,$ and \eqref{we} says nothing. Thus, it is desirable, to
obtain extensions of \eqref{we} shedding also light on $W(\mathcal T)$ in the case of tuples with ``small'' essential numerical range.
The next theorem serves just that purpose.  Extending \eqref{we}, it allows one to describe "big" subsets of  $W(\mathcal T)$ in spectral terms.
The result is also related to \cite[Theorem 2.2]{Wrobel} where weaker statements have been obtained.
As \cite[Theorem 2.2]{Wrobel}, the theorem below depends on Zenger's Lemma, and also uses the following simple statement.
\begin{lemma}\label{compres}
Let $\mT \in B(H)^n.$ If $\lambda \in W_e(\mT),$ then
for every $\de>0$ and every  subspace $M\subset H$ of finite codimension
there exists a unit vector $x\in M$ such that $\|\langle\mT x,x\rangle-\la\|<\de$.
\end{lemma}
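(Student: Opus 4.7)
The plan is to use the definition of the essential numerical range directly and then to ``project'' the witnessing orthonormal sequence into $M$, using that $M$ has finite codimension.

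First, I would fix $\lambda \in W_e(\mT)$ and invoke the definition to obtain an orthonormal sequence $(x_k)_{k \ge 1} \subset H$ with $\langle T_j x_k, x_k \rangle \to \la_j$ for $j = 1, \dots, n$. Let $P_M$ denote the orthogonal projection onto $M$, so that $Q := I - P_M$ has finite rank, hence is compact. Since $(x_k)$ is orthonormal, it converges weakly to $0$, and compactness of $Q$ gives $\|Q x_k\| \to 0$, equivalently $\|P_M x_k\| \to 1$.

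Next, I would set $y_k := P_M x_k / \|P_M x_k\|$ for $k$ large enough that $P_M x_k \ne 0$. Clearly $y_k \in M$ and $\|y_k\| = 1$. Writing $P_M x_k = x_k - Q x_k$ and expanding, for each $j$
\begin{equation*}
\langle T_j P_M x_k, P_M x_k \rangle = \langle T_j x_k, x_k \rangle - \langle T_j Q x_k, x_k\rangle - \langle T_j x_k, Q x_k\rangle + \langle T_j Q x_k, Q x_k \rangle,
\end{equation*}
and the last three terms are bounded in absolute value by $3\|T_j\|\,\|Q x_k\| \to 0$. Therefore $\langle T_j P_M x_k, P_M x_k \rangle \to \la_j$, and dividing by $\|P_M x_k\|^2 \to 1$ yields $\langle T_j y_k, y_k \rangle \to \la_j$ for every $j = 1, \dots, n$.

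Consequently $\|\langle \mT y_k, y_k \rangle - \la\| \to 0$, so for $k$ sufficiently large we may set $x := y_k$ to obtain the desired unit vector in $M$ with $\|\langle \mT x, x\rangle - \la\| < \de$. There is no genuine obstacle here; the only point requiring care is the passage from $(x_k)$ to its normalized $M$-components, which is handled uniformly by the compactness of $I - P_M$ and the weak null convergence of an orthonormal sequence.
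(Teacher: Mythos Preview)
Your proof is correct and follows essentially the same approach as the paper: both take the orthonormal sequence witnessing $\la\in W_e(\mT)$, project it onto $M$, normalize, and use that the projection onto $M^\perp$ is finite-rank (hence sends the weakly null sequence $(x_k)$ to $0$ in norm) to conclude that the normalized projections still realize $\la$ in the limit. The only difference is cosmetic: you expand $\langle T_j P_M x_k, P_M x_k\rangle$ explicitly, whereas the paper simply notes $\|u_i-x_i\|\to 0$ and invokes continuity.
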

\begin{proof}
 If $\la\in W_e(\mT)$ then there exists an orthonormal sequence $(x_i)_{i \ge 1}$ in $H$  such that $\langle\mT x_i,x_i\rangle\to \la$, $i \to \infty$. Let $M\subset H$ be a subspace of a finite codimension and $\de>0$.
 We have $\|P_{M^\perp}x_i\|\to 0$, and so $\|P_Mx_i-x_i\|\to 0$ as $i \to \infty$.
 Set $$u_i=\frac{P_Mx_i}{\|P_Mx_i\|}, \qquad i \ge 1.$$
Then $\lim_{i\to\infty}\|u_i-x_i\|=0$ and $\lim_{i\to\infty}\langle \mT u_i,u_i\rangle=\la$. Hence there exists $i_0$ such that
$\|\langle\mT u_{i_0},u_{i_0}\rangle-\la\|<\de$.
\end{proof}
\begin{theorem}\label{numranges}
Let $\mathcal T=(T_1,\dots,T_n)\in B(H)^n$. Then
$$\Int \conv\bigl(W_e(\mathcal T)\cup\si_p(\mathcal T)\bigr)\subset W(\mathcal T).$$
\end{theorem}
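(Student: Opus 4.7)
The plan is to combine Carath\'eodory's theorem with Zenger's Lemma, applied to a family consisting of joint eigenvectors (for the $\si_p$-contributions to $\la$) and one auxiliary vector (for the $W_e$-contribution), and then to verify by a direct calculation that the resulting unit vector $u$ satisfies $\langle \mT u,u\rangle=\la$ exactly.

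First, since $\la\in\Int\conv(W_e(\mT)\cup\si_p(\mT))$, Carath\'eodory's theorem lets me write $\la=\sum_{j=1}^{m}\al_j\mu_j$ with $\mu_j\in W_e(\mT)\cup\si_p(\mT)$, $\al_j>0$, and $\sum_j\al_j=1$. Since $W_e(\mT)$ is convex, I may consolidate all $\mu_j$ lying in $W_e(\mT)$ into a single point $\nu\in W_e(\mT)$ with combined weight $\beta$, thereby reducing the decomposition to
$$
\la=\sum_{j=1}^{p}\al_j\mu_j+\beta\nu,\qquad \mu_1,\dots,\mu_p\in\si_p(\mT)\ \text{distinct},\ \nu\in W_e(\mT).
$$
I would then promote $\nu$ into $\Int W_e(\mT)$ as follows: pick $\nu_0\in\Int W_e(\mT)$ (assuming the latter is non-empty), and use that $\la$ is interior to write $\la=\gamma\nu_0+(1-\gamma)\la_\gamma$ with $\la_\gamma\in\conv(W_e\cup\si_p)$ for small $\gamma>0$; consolidating $\nu_0$ with the $W_e$-part of a Carath\'eodory decomposition of $\la_\gamma$ places the combined $W_e$-point in the interior by convexity.

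Second, fix joint eigenvectors $v_1,\dots,v_p$ with $T_k v_j=\mu_{j,k}v_j$; these are linearly independent because the $\mu_j$ are distinct. Let $V=\operatorname{span}\{v_j,\,T_k^*v_j:1\le j\le p,\ 1\le k\le n\}$ and $M=V^\perp$. Since $M$ has finite codimension and $P_{M^\perp}$ is finite-rank, a standard truncate-and-renormalise argument on orthonormal sequences shows $W_e(P_M\mT P_M)=W_e(\mT)$ when $P_M\mT P_M$ is viewed as a tuple on $M$. Applying \eqref{we} to this compression at the interior point $\nu$ supplies a unit vector $v_{p+1}\in M$ with $\langle \mT v_{p+1},v_{p+1}\rangle=\nu$ \emph{exactly}.

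Finally, apply Zenger's Lemma to the linearly independent vectors $v_1,\dots,v_{p+1}$ with weights $\al_1,\dots,\al_p,\beta$, producing $w_j\in\CC$ and a unit vector $u=\sum_{j}w_jv_j$ with $\langle w_jv_j,u\rangle=\al_j$ (respectively $\beta$ for $j=p+1$). The orthogonalities $v_{p+1}\perp v_j$ for $j\le p$ give $\langle v_{p+1},u\rangle=\overline{w_{p+1}}$, whence $|w_{p+1}|^2=\beta$; combining this with $T_kv_j=\mu_{j,k}v_j$ for $j\le p$ and $v_{p+1}\perp T_k^*v_j$ for $j\le p$ collapses the expansion of $\langle T_ku,u\rangle$ to
$$
\langle T_ku,u\rangle=\sum_{j=1}^{p}\mu_{j,k}\al_j+|w_{p+1}|^2\nu_k=\la_k,
$$
giving $\la\in W(\mT)$. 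The hard part is the perturbation step forcing $\nu\in\Int W_e(\mT)$: the construction above rests on $\Int W_e(\mT)\ne\emptyset$, and the degenerate case $\Int W_e(\mT)=\emptyset$---in which $W_e(\mT)$ lies in a proper affine subspace of $\CC^n$---will need a separate treatment, presumably by placing several mutually orthogonal approximate-$\nu$ vectors into $M$ via Lemma \ref{compres} and using the extra freedom in Zenger's weights, together with the interior hypothesis on $\la$, to absorb the resulting approximation error.
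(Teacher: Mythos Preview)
Your one--shot Zenger argument is correct and elegant when $\Int W_e(\mT)\ne\emptyset$: the perturbation forcing the $W_e$--barycentre $\nu$ into the interior works (a convex combination with positive weight on an interior point of a convex set is interior), the equality $W_e(P_M\mT P_M|_M)=W_e(\mT)$ for finite--codimensional $M$ is standard, and the final computation collapses exactly as you wrote. In this regime your proof is shorter and more transparent than the paper's: a single application of \eqref{we} to the compression produces an \emph{exact} $W_e$--vector, so one Zenger application suffices, whereas the paper runs an infinite iteration even here.

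The genuine gap is the case $\Int W_e(\mT)=\emptyset$, and your proposed fix does not close it. There is no ``extra freedom in Zenger's weights'': once the convex decomposition of $\la$ is fixed, Zenger returns a \emph{specific} unit vector $u$ with $\langle w_jv_j,u\rangle=\al_j$, and replacing the exact vector $v_{p+1}$ by finitely many approximate ones from Lemma~\ref{compres} can only yield $\|\langle\mT u,u\rangle-\la\|$ small, never zero. To hit $\la$ exactly one must re--express the residual error as a new convex combination over $W_e\cup\si_p$ and iterate --- and this is precisely the mechanism of the paper's proof. The paper fixes finitely many eigenvectors $u_1,\dots,u_m$ that are $r/2$--dense in $\si_p(\mT)$, shows the $r/2$--ball lies in $\conv\bigl(W_e(\mT)\cup\{\la^{(1)},\dots,\la^{(m)}\}\bigr)$, and then builds $x_k=v_k+w_k$ inductively: at each step the (rescaled) current error is written as such a convex combination, Zenger is reapplied to update the eigenvector part $w_k\in\bigvee u_i$, and fresh mutually orthogonal approximate $W_e$--vectors are appended to $v_k$. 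The $v_k$ form a Cauchy sequence (geometric increments), while the $w_k$ live in a fixed finite--dimensional space and have a convergent subsequence; the limit $x$ realises $\la$ exactly. This iteration is exactly what is missing from your sketch, and nothing short of a limiting construction of this kind seems to cover the degenerate case.
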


\begin{proof}
Let $\la=(\la_1,\dots,\la_n)\in\Int\conv\bigl(W_e(\mathcal T)\cup\si_p(\mathcal T)\bigr)$. We show that $\la\in W(\mathcal T)$.
By considering the $n$-tuple $(T_1-\la_1,\dots,T_n-\la_n)$ instead of $(T_1,\dots,T_n)$, we can assume without loss of generality that $(\la_1,\dots,\la_n)=(0,\dots,0).$

Let $r>0$ satisfy
$$
\{(\e_1,\dots,\e_n):\max_j|\e_j|\le r\}\subset\conv \bigl(W_e(T)\cup\si_p(T)\bigr).
$$

Let $\la^{(1)},\dots,\la^{(m)}\in\si_p(\mathcal T)\setminus W_e(\mathcal T)$ be a finite set such that
$$
\min\bigl\{\|\mu-\la^{(i)}\|:i=1,\dots,m\bigr\}<\frac{r}{2}
$$
for all $\mu\in\si_p(\mathcal T)$.

We show that
\begin{equation}\label{inclu}
\Bigl \{\e\in\CC^n: \|\e\|\le\frac{r}{2}\Bigr\}\subset
\conv\bigl(W_e(\mathcal T)\cup\{\la^{(1)},\dots,\la^{(m)}\}\bigr).
\end{equation}
Let $a_1,\dots,a_n\in\CC$, $\sum_{i=1}^n|a_i|=1$, and $c\in\RR.$ Assume that
$$
\Re \sum_{i=1}^na_iz_i \ge c
$$
for all $z=(z_1,\dots,z_n)\in W_e(\mathcal T)\cup\{\la^{(1)},\dots,\la^{(m)}\}$.
Then
$$
\Re \sum_{i=1}^na_iz_i \ge c-\frac{r}{2}
$$
for all $z=(z_1,\dots,z_n)\in W_e(\mathcal T)\cup\si_p(\mathcal T)$. Consequently,
$$\Re \sum_{i=1}^na_iz_i \ge c-\frac{r}{2} \qquad \text{for all}\qquad z\in\CC^n, \quad \|z\|\le r.$$
Setting $z_i=e^{-i\arg a_i} r, 1 \le i \le n,$ in the inequality above we infer that
$c+\frac{r}{2}\le 0$.

If $z\in\CC^n$, $\|z\|\le \frac{r}{2}$, then
$$\Re \sum_{i=1}^na_iz_i \ge -\frac{r}{2}\ge c.$$

Since the convex  hull of $W_e(\mathcal T)\cup\{\la^{(1)},\dots,\la^{(m)}\}$ is the intersection of all halfspaces containing it, this shows \eqref{inclu}.

Fix now eigenvectors $u_1,\dots,u_m\in H$ such that $\mathcal T  u_i=\la^{(i)} u_i,$ $\|u_i\|=1$.
Let
$$F=\bigvee_{i=1}^m u_i \qquad \text{and} \qquad  M=F^\perp\cap\bigcap_{j=1}^n (T_jF)^\perp\cap\bigcap_{j=1}^n (T_j^{*}F)^\perp.$$
Clearly $\dim F<\infty$ and $\codim M<\infty$.
Note that
$$F\perp M, \quad T_j F \perp M \quad \text{and} \quad T_jM\perp F \quad  \text{for all}\, j=1,\dots,n.$$
We construct a unit vector $x\in H$ satisfying $\langle T_jx,x\rangle=0$ for all $1 \le j \le n$ inductively as a limit of consecutive approximations. Set $x_0=v_0=w_0=0$. It will be convenient to separate the following fact.
\bigskip

\noindent{\bf Claim.} Let $r>0$, $F, M\subset H$ be as above. Let
$k\in \mathbb N\cup\{0\}$, let  $v_{k}\in M$, $w_{k}\in F$ and $x_{k}=v_k+w_k$
satisfy
$$
w_k=\sum_{i=1}^m t_iu_i, \qquad \langle t_iu_i,w_k\rangle=\al_i\ge 0, \qquad  \|w_k\|^2=\sum_{i=1}^m\al_i,
$$
$$
\|x_k\|^2=1-2^{-k} \qquad \text{and} \qquad \|\langle \mathcal  T x_k, x_k\rangle\|<\frac{r}{2^{k+2}}.
$$

Then there exist $v_{k+1}, w_{k+1}$ and $x_{k+1},$
$$x_{k+1}=v_{k+1}+w_{k+1}, \qquad v_{k+1}\in M, \,\, w_{k+1} \in F,$$
such that
$$
w_{k+1}=\sum_{i=1}^ms_iu_i, \qquad \langle s_iu_i,w_{k+1}\rangle=\beta_i\ge 0, \qquad \|w_{k+1}\|^2=\sum_{i=1}^m\beta_i,
$$
$$
\|x_{k+1}\|^2=1-2^{-k-1}, \qquad  \|v_{k+1}-v_k\|^2\le\frac{1}{2^{k+1}}
$$
and
$$
\|\langle \mathcal T x_{k+1}, x_{k+1}\rangle\|<\frac{r}{2^{k+3}}.$$
\medskip

To prove the claim, let $\e=\langle \mathcal Tx_k,x_k\rangle$. Since $\|\e\|<\frac{r}{2^{k+2}},$ there exist
elements $\la^{(m+1)},\dots,\la^{(m')}\in W_e(\mathcal T)$ and numbers $c_1,\dots,c_{m'}\ge 0$ such that $\sum_{i=1}^{m'}c_i=1$ and
$$
\sum_{i=1}^{m'}c_i\la^{(i)}=-\e 2^{k+1}.
$$
By  Zenger's Lemma, there are complex numbers $s_1,\dots,s_n$ such that $w_{k+1}:=\sum_{i=1}^ms_iu_i$ satisfies
$$\langle s_iu_i,w_{k+1}\rangle=\al_i+\frac{c_i}{2^{k+1}} \quad \text{and}\quad
\|w_{k+1}\|^2=\sum_{i=1}^m\Bigl(\al_i+\frac{c_i}{2^{k+1}}\Bigr).$$
Thus $$\langle \mathcal Tw_{k+1},w_{k+1}\rangle=\sum_{i=1}^m\Bigl(\al_i+\frac{c_i}{2^{k+1}}\Bigr)\la^{(i)}.$$

The elements $\la^{(m+1)},\dots,\la^{(m')}$ belong to $W_e(\mathcal T)$.
Using Lemma \ref{compres}  and the induction argument, we can construct unit vectors $y_{m+1},\dots,y_{m'}$ in the following way.

Suppose that $m\le s<m'$ and that the vectors $y_{m+1},\dots,y_s$ have already been constructed.
Set
\begin{align*}
G:=&\bigvee\{v_k, T_jv_k, y_i, T_jy_i:m+1\le i\le s, 1\le j\le n\}, \\
L:=&G^\perp\cap\bigcap_{j=1}^n(T_j^*G)^\perp.
\end{align*}

 Then $\dim G<\infty$ and $\codim L<\infty$. Hence there exists a unit vector $y_{s+1}\in L\cap M$ such that
$$
\bigl\|\langle T  y_{s+1},  y_{s+1}\rangle-\la^{(s+1)}\bigr\|<\frac{r}{4}.
$$
If the vectors $y_{m+1},\dots,y_{m'}$ are constructed, set
$$v_{k+1}=v_k+\sum_{i=m+1}^{m'}\Bigl(\frac{c_i}{2^{k+1}}\Bigr)^{1/2}y_i \qquad \text{and} \qquad x_{k+1}=v_{k+1}+w_{k+1}.$$
 Then
 \begin{align*}
\|v_{k+1}\|^2=&\|v_k\|^2+\sum_{i=m+1}^{m'}\frac{c_i}{2^{k+1}},\\
\|v_{k+1}-v_k\|^2=&\sum_{i=m+1}^{m'}\frac{c_i}{2^{k+1}}\le\frac{1}{2^{k+1}}
\end{align*}
and
\begin{align*}
\|x_{k+1}\|^2=&\|v_{k+1}\|^2+\|w_{k+1}\|^2=
\|v_k\|^2+\sum_{i=m+1}^{m'}\frac{c_i}{2^{k+1}}+\sum_{i=1}^{m}\Bigl(\al_i+\frac{c_i}{2^{k+1}}\Bigr)\\
=&
\|v_k\|^2+\frac{1}{2^{k+1}}+\|w_k\|^2=
\|x_k\|^2+\frac{1}{2^{k+1}}=
1-\frac{1}{2^{k+1}}.
\end{align*}

Finally,
\begin{align*}
\bigl\|\langle {\mathcal T}  x_{k+1}, x_{k+1}\rangle\bigr\|=&
\Bigl\|\langle {\mathcal T} v_{k}, v_{k}\rangle+ \sum_{i=m+1}^{m'}\frac{c_i}{2^{k+1}}\langle {\mathcal T} y_i, y_i\rangle+\langle {\mathcal T} w_{k+1}, w_{k+1}\rangle\Bigr\|\\
\le&
\Bigl\| \langle {\mathcal T} v_{k}, v_{k}\rangle+ \sum_{i=m+1}^{m'}\frac{c_i\la^{(i)}}{2^{k+1}}
+\sum_{i=1}^m(\al_i+\frac{c_i}{2^{k+1}})\la^{(i)}\Bigr\|\\
+&
\sum_{i=m+1}^{m'} \frac{c_i}{2^{k+1}}\bigl\|\langle T  y_i, y_i\rangle-\la^{(i)}\bigr\| \\
\le&
\Bigl\|\langle T x_k,  x_k\rangle+\frac{1}{2^{k+1}}\sum_{i=1}^{m'}c_i\la^{(i)}\Bigr\|+\sum_{m+1}^{m'}\frac{c_i}{2^{k+1}}\cdot \frac{r}{4}\\
=&\frac{r}{2^{k+3}}.
\end{align*}
This finishes the proof of the claim.

\medskip

Now construct the vectors $v_k,w_k$ and $x_k=v_k+w_k, k \in \mathbb N,$ inductively as described in the Claim.
Clearly $(v_k)_{k \ge 1}$ is a Cauchy sequence, and we let $v\in M$ be its limit.
The sequence $(w_k)_{k \ge 1}$ is a bounded sequence in the finite-dimensional space $F$. By passing to a subsequence if necessary, we may assume that $(w_k)_{k \ge 1}$ is convergent, $w_k\to w\in F, k \to \infty$.
The vector
$$  x= v+  w=\lim_{k \to \infty}(v_k + w_k)=\lim_{k \to \infty} x_k$$
satisfies $\|x\|=1$ and $\langle \mathcal T  x, x\rangle=0$.
This finishes the proof.
\end{proof}

Theorem \ref{numranges} yields the following generalization of \cite[Corollary 4.4]{MullerT},
 replacing $\sigma_e(\mathcal T)$ by $\sigma(\mathcal T)$ there, cf. \eqref{sp}.
 The generalization  complements \cite[Corollary 2.3]{Wrobel} where, for a commuting tuple $\mathcal T \in B(H)^n,$ it was shown that
 \begin{equation}\label{wrobel}
 \conv\si(\mathcal T)\subset \overline {W(\mathcal T)}.
 \end{equation}
(Note that, as it will be clear below, we will be interested in spectral inclusions for the numerical range $W(\mathcal T),$ rather than for its closure.)

\begin{corollary}\label{spectrum}
Let $\mathcal T=(T_1,\dots,T_n)\in B(H)^n$ be a commuting $n$-tuple. Then $$\Int\conv\si(\mathcal T)\subset W(\mathcal T).$$
\end{corollary}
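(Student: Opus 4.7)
The plan is to reduce Corollary \ref{spectrum} to Theorem \ref{numranges} by establishing the inclusion
$$\conv \si(\mathcal T)\subset \conv\bigl(W_e(\mathcal T)\cup\si_p(\mathcal T)\bigr),$$
after which taking interiors and invoking Theorem \ref{numranges} finishes everything. The whole difficulty is packed into this set-theoretic inclusion, and it relies on three standard spectral facts plus a convex-geometric observation.

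First I would note the elementary convex-geometric fact that for any compact set $K\subset\RR^m$ one has $\conv K=\conv \partial K$: every extreme point of $\conv K$ belongs to $K$ (by Krein--Milman applied in finite dimensions) and cannot lie in the interior of $K$, so all extreme points sit in $\partial K$, and the conclusion follows from Krein--Milman once more. Applied to $K=\si(\mathcal T)\subset\CC^n\cong\RR^{2n}$, this reduces the task to showing $\partial\si(\mathcal T)\subset W_e(\mathcal T)\cup\si_p(\mathcal T)$.

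Next I would invoke the classical Harte boundary theorem for commuting tuples in a Banach algebra: $\partial\si(\mathcal T)\subset \si_\pi(\mathcal T)$, where $\si_\pi(\mathcal T)$ is the joint approximate point spectrum (sequences of unit vectors $x_k$ with $\|(T_j-\la_j)x_k\|\to 0$ for all $j$); this is proved in \cite[Chapter III.19]{Muller}. Then I would argue that
$$\si_\pi(\mathcal T)\subset \si_p(\mathcal T)\cup \si_{\pi e}(\mathcal T).$$
Indeed, if $\la\in \si_\pi(\mathcal T)\setminus\si_p(\mathcal T)$ with approximating unit vectors $(x_k)$, then $(x_k)$ has no nonzero weak limit point (else it would be an eigenvector), so $x_k\to 0$ weakly; a routine gliding-hump/Gram--Schmidt extraction from $(x_k)$ produces an orthonormal sequence that still approximates $\la$, placing $\la$ in $\si_{\pi e}(\mathcal T)$. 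Combining with $\si_{\pi e}(\mathcal T)\subset W_e(\mathcal T)$ (immediate from the definition of $W_e$, since $\langle T_jx_k,x_k\rangle-\la_j=\langle (T_j-\la_j)x_k,x_k\rangle\to 0$) gives $\partial\si(\mathcal T)\subset W_e(\mathcal T)\cup\si_p(\mathcal T)$, hence the desired convex inclusion. This also yields the remark made in the excerpt that $\conv \si_e(\mathcal T)\subset W_e(\mathcal T)$, since $W_e$ is convex and $\partial \si_e(\mathcal T)\subset \si_{\pi e}(\mathcal T)$ by the same argument applied to the Calkin algebra.

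The main obstacle, to the extent there is one, is the promotion of an approximate eigensequence to an orthonormal one in the step $\si_\pi\setminus\si_p\subset\si_{\pi e}$; once this and the Harte boundary result are in hand, the rest is purely book-keeping and an appeal to Theorem \ref{numranges}.
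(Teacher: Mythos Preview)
Your argument is correct, and it reaches the same endpoint as the paper --- the inclusion $\conv\si(\mathcal T)\subset\conv\bigl(W_e(\mathcal T)\cup\si_p(\mathcal T)\bigr)$ followed by an appeal to Theorem \ref{numranges} --- but the route is genuinely different. The paper splits $\si(\mathcal T)$ into $\widehat\si_e(\mathcal T)$ and its complement, quoting from \cite{Muller} first that $\widehat\si_e(\mathcal T)=\widehat\si_{\pi e}(\mathcal T)$ (hence $\conv\si_e(\mathcal T)\subset W_e(\mathcal T)$ by convexity of $W_e$), and second that $\si(\mathcal T)\setminus\widehat\si_e(\mathcal T)$ consists of isolated eigenvalues. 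You instead pass to the topological boundary via $\conv K=\conv\partial K$, invoke the Harte boundary inclusion $\partial\si(\mathcal T)\subset\si_\pi(\mathcal T)$, and then prove by hand that $\si_\pi(\mathcal T)\setminus\si_p(\mathcal T)\subset\si_{\pi e}(\mathcal T)$ using the weak-limit/Gram--Schmidt extraction.

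Each approach has its merits. The paper's decomposition is structurally stronger (it locates every spectral point either in $\widehat\si_e$ or in $\si_p$, not merely the boundary points), but it leans on two nontrivial citations about polynomial hulls and the Fredholm-theoretic description of $\si\setminus\widehat\si_e$. Your route is more elementary: the boundary theorem $\partial\si\subset\si_\pi$ is a softer fact than the polynomial-hull equality, and the promotion $\si_\pi\setminus\si_p\subset\si_{\pi e}$ is carried out directly rather than extracted from the essential-spectrum machinery. The one step you should state a bit more carefully is the weak-limit argument: a weak cluster point $x$ of the approximating unit vectors satisfies $(T_j-\la_j)x=0$ for each $j$ because $T_j-\la_j$ is weak--weak continuous and $(T_j-\la_j)x_k\to 0$ in norm; since $\la\notin\si_p(\mathcal T)$ this forces $x=0$, and then boundedness plus reflexivity gives $x_k\to 0$ weakly, after which the gliding-hump extraction is routine.
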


\begin{proof}
First note that by \cite[Corollary 19.16]{Muller}, the polynomial hulls $\widehat \si_e(\mathcal T)$ and $\widehat \si_{\pi e}(\mathcal T)$ coincide,
so $\conv \si_e(\mathcal T)=\conv \si_{\pi e}(\mathcal T).$ In view of convexity of $W_e(\mathcal T)$, it follows that
$\conv \si_e(\mathcal T)\subset W_e(\mathcal T),$ and thus, in particular, $\widehat \si_{e}(\mathcal T) \subset W_e(\mathcal T)$.
Moreover, by \cite[Theorem 19.18]{Muller}, the set $\si (\mathcal T)\setminus \widehat \sigma_e(\mathcal T)$ consists of isolated eigenvalues of $\mathcal T$. Therefore, we have
$$\conv \si(\mathcal T)= \conv (\widehat \si_{e}(\mathcal T) \cup\si_p(\mathcal T)) \subset \conv (W_{e}(\mathcal T) \cup\si_p(\mathcal T)).$$
The statement follows then from Theorem \ref{numranges}.
\end{proof}

Statements like Theorem \ref{numranges} specified for tuples $(T,T^2,\dots,T^n)\in B(H)^n$
allow one to find appropriate tuples of powers of complex numbers in their joint numerical ranges $W(T,T^2,\dots,T^n)$,  thus revealing
certain geometric properties of the orbits of $T.$ For instance, the fact that $(0, \dots, 0) \in W(T,T^2,\dots,T^n)$
yields an element $x \in H$ such that $x \perp T^k x$ for all $k$ between $1$ and $n.$
The latter property was introduced and characterized in spectral terms for unitary $T$ by Arveson,  \cite{Arveson}.
For its  generalizations  see \cite{MullerT}. In general, the structure of $W(T,T^2,\dots,T^n)$
can be rather complicated even if $H$ is finite-dimensional, see e.g. \cite{Davis}.

The next theorem was proved in \cite[Corollary 4.2 and Corollary 4.7]{MullerT}. It will be instrumental in Section \ref{restrict}
below dealing with asymptotic properties of compressions of powers.

\begin{theorem}\label{lambdawe}
Let $T\in B(H)$ and let $\la$ belong to the interior of polynomial hull of $\sigma(T)$.  Then
$$
(\la,\la^2,\dots,\la^n)\in  \Int W_e(T, T^2, \dots, T^n) \subset W(T,T^2,\dots,T^n).
$$
for all $n\in\NN.$
\end{theorem}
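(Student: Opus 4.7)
The plan is to reduce to proving the spectral inclusion $(\la,\la^2,\dots,\la^n)\in \Int W_e(T,T^2,\dots,T^n)$, since the second inclusion $\Int W_e(\mathcal T)\subset W(\mathcal T)$ is precisely \eqref{we}. Write $\mathcal T:=(T,T^2,\dots,T^n)$ and $\phi(\mu):=(\mu,\mu^2,\dots,\mu^n)$ for $\mu\in\CC$. First I would observe that $\Int \widehat{\si(T)}\subset \Int \widehat{\si_e(T)}$: indeed $\si(T)\setminus\si_e(T)$ is a countable set of isolated points of $\si(T)$ (isolated eigenvalues of finite multiplicity), so any open disc contained in $\widehat{\si(T)}$ cannot meet the unbounded component of $\CC\setminus\si_e(T)$ in a nonempty open set, and hence must lie in $\widehat{\si_e(T)}$. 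Thus one may assume $\la\in\Int\widehat{\si_e(T)}$ and fix $r>0$ with $\overline{D(\la,r)}\subset\widehat{\si_e(T)}$.

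Next I would lift $\widehat{\si_e(T)}$ into $W_e(\mathcal T)$. For every $\mu\in\widehat{\si_e(T)}$ the Hahn--Banach theorem produces a probability measure $\nu_\mu$ on $\si_e(T)$ with $\int \zeta^k\,d\nu_\mu(\zeta)=\mu^k$ for all $k\in\NN$, and so $\phi(\mu)=\int \phi(\zeta)\,d\nu_\mu(\zeta)\in\conv \phi(\si_e(T))$. Using the spectral mapping identity $\si_e(\mathcal T)=\phi(\si_e(T))$, the inclusion $\si_e(\mathcal T)\subset W_e(\mathcal T)$ for a commuting tuple (cf.\ the proof of Corollary \ref{spectrum}), and the convexity of $W_e(\mathcal T)$, one obtains
\[
\conv \phi(\overline{D(\la,r)})\subset W_e(\mathcal T).
\]

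The core geometric step is to place $\phi(\la)$ in the \emph{interior} of the left-hand side in $\CC^n\cong\RR^{2n}$. For this I would take the $2n+1$ equispaced boundary points $\mu_j:=\la+r\omega^j$, $j=0,1,\dots,2n$, with $\omega:=e^{2\pi i/(2n+1)}$, and use the Taylor expansion
\[
\phi(\la+z)=\phi(\la)+\sum_{l=1}^{n} z^l v_l,\qquad v_l:=\Bigl(\binom{k}{l}\la^{k-l}\Bigr)_{k=1}^{n}\in\CC^n.
\]
The $n\times n$ matrix with columns $v_1,\dots,v_n$ is triangular with unit diagonal, so $v_1,\dots,v_n$ are $\CC$-linearly independent in $\CC^n$. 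The identity $\sum_{j=0}^{2n}\omega^{jl}=0$ for $1\le l\le 2n$ immediately gives $\phi(\la)=\frac{1}{2n+1}\sum_{j=0}^{2n}\phi(\mu_j)$. To upgrade this centroid identity to the desired interior statement, I would verify $\RR$-affine independence of $\phi(\mu_0),\dots,\phi(\mu_{2n})$: if $\sum_{j=0}^{2n} a_j\phi(\mu_j)=0$ with $a_j\in\RR$ and $\sum_j a_j=0$, then substituting the expansion and using the $\CC$-linear independence of the $v_l$ forces $\sum_j a_j\omega^{jl}=0$ for $l=1,\dots,n$; taking complex conjugates (since $a_j\in\RR$) extends this to $l=n+1,\dots,2n$, and so the discrete Fourier transform of $(a_j)_{j=0}^{2n}$ vanishes identically, whence $a_j\equiv 0$.

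Thus $\phi(\la)$ is the centroid of a $2n$-dimensional simplex contained in $W_e(\mathcal T)$, which places it in $\Int W_e(\mathcal T)$; combined with \eqref{we} this completes the proof. The main obstacle I anticipate is exactly the $\RR$-affine independence step: without it the centroid identity only yields $\phi(\la)\in W_e(\mathcal T)$, whereas the conclusion requires membership in the full-dimensional interior of $W_e(\mathcal T)$ so that \eqref{we} may be invoked. The remaining pieces---the reduction from $\widehat{\si(T)}$ to $\widehat{\si_e(T)}$, the representing measure construction, the spectral mapping identity, and the averaging---are routine given the material already recalled in the paper.
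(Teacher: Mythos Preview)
The paper does not give its own proof of Theorem~\ref{lambdawe}; it is quoted from \cite[Corollary~4.2 and Corollary~4.7]{MullerT}. Your argument is therefore a self-contained substitute, and its architecture is sound: reduce from $\widehat{\sigma(T)}$ to $\widehat{\sigma_e(T)}$, push the closed disc $\overline{D(\la,r)}$ into $W_e(\mathcal T)$ via representing measures, the spectral mapping identity $\sigma_e(\mathcal T)=\phi(\sigma_e(T))$, and convexity of $W_e(\mathcal T)$, and finally exhibit $\phi(\la)$ as the barycentre of a full-dimensional simplex inside $W_e(\mathcal T)$ using $2n+1$ equispaced points and the discrete Fourier transform. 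The affine-independence step via the DFT is correct and is indeed the crux; together with \eqref{we} it yields the conclusion.

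There is one slip in the first reduction. The assertion that $\sigma(T)\setminus\sigma_e(T)$ is a countable set of isolated points of $\sigma(T)$ is false in general: for the unilateral shift $S$ one has $\sigma(S)\setminus\sigma_e(S)=\DD$. What is true---and what you actually need---is that $\sigma(T)\setminus\widehat{\sigma_e(T)}$ is a discrete set of isolated eigenvalues (the $n=1$ case of the fact invoked in the proof of Corollary~\ref{spectrum}). With this correction the inclusion $\Int\widehat{\sigma(T)}\subset\Int\widehat{\sigma_e(T)}$ follows cleanly: $\sigma(T)\subset\widehat{\sigma_e(T)}\cup E$ with $E$ discrete in the connected set $\CC\setminus\widehat{\sigma_e(T)}$, so $\widehat{\sigma(T)}\subset\widehat{\sigma_e(T)}\cup E$ and the interiors coincide. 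No other changes are needed.
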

Note  that the assumption on $\la$ as in the theorem above is quite natural and apparently  close to optimal as the following statements show.

\begin{proposition}
Let $T\in B(H)$, $n\in\NN$. Suppose that
\begin{enumerate}
\item [(i)] $(0,\dots,0)\notin{\rm Int}\,{\rm conv}\, W(T, T^2, \dots,T^n)$;
\item [(ii)] there exists a unit vector $x\in H$ such that $x\perp Tx,T^2x,\dots,T^{2n}x$.
\end{enumerate}
Then $\sigma_p(T)\ne\emptyset$.
\end{proposition}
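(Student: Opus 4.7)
The plan is to argue by contradiction: assume $\sigma_p(T) = \emptyset$ and extract a contradiction from (i) and (ii). As a preliminary reduction, we may assume that the vectors $x, Tx, T^2 x, \dots, T^n x$ are linearly independent; otherwise, taking a nonzero polynomial $r$ of minimal degree $d \le n$ with $r(T) x = 0$ and factoring $r(z) = (z - \la) s(z)$, minimality gives $s(T) x \ne 0$ while $(T - \la) s(T) x = 0$, producing an eigenvector immediately.

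Next, hypothesis (ii) places $(0, \dots, 0)$ in $W(T, T^2, \dots, T^n)$, so by (i) the origin lies on the boundary of $\conv W(T, T^2, \dots, T^n)$. A supporting hyperplane in $\CC^n \cong \RR^{2n}$ then supplies coefficients $(a_1, \dots, a_n) \in \CC^n \setminus \{0\}$ with
\[
\Re \sum_{j=1}^n a_j \langle T^j v, v \rangle \ge 0 \qquad \text{for every unit vector } v \in H,
\]
and setting $p(z) := \sum_{j=1}^n a_j z^j$, this rewrites as the operator inequality $p(T) + p(T)^* \ge 0$.

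The decisive step is to upgrade this soft hyperplane relation to the hard annihilation $p(T) x = 0$. Using $\langle T^k x, x\rangle = 0$ for $k = 1, \dots, n$, we have $\langle (p(T) + p(T)^*) x, x\rangle = 2\Re \langle p(T) x, x\rangle = 0$, and positivity forces $(p(T) + p(T)^*) x = 0$, i.e.\ $p(T)^* x = -p(T) x$. The full strength of (ii) enters now: expanding $p(T)^2 = \sum_{j,k=1}^n a_j a_k T^{j+k}$, all exponents lie in $\{2, \dots, 2n\}$, so the orthogonality relations in (ii) give $\langle p(T)^2 x, x\rangle = 0$; on the other hand $\langle p(T)^2 x, x\rangle = \langle p(T) x, p(T)^* x\rangle = -\|p(T) x\|^2$, and the two identities together force $p(T) x = 0$.

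To conclude, write $p(z) = z q(z)$ with $q(z) = \sum_{j=0}^{n-1} a_{j+1} z^j$, a nonzero polynomial of degree $\le n-1$. The linear independence of $x, Tx, \dots, T^{n-1} x$ gives $q(T) x \ne 0$, while $T(q(T) x) = p(T) x = 0$, so $0 \in \sigma_p(T)$, contradicting the standing assumption. The main obstacle, and the reason orthogonality must reach $T^{2n} x$ rather than only $T^n x$, is precisely the identity $\langle p(T)^2 x, x\rangle = 0$: without it, the supporting hyperplane would only furnish $(p(T) + p(T)^*) x = 0$, which is too weak to produce an annihilator for $T$.
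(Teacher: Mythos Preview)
Your proof is correct and follows essentially the same route as the paper: a supporting hyperplane gives $\Re\langle p(T)u,u\rangle \ge 0$ for all $u$, the orthogonality up to $T^{2n}x$ is used to deduce $p(T)x = 0$, and factoring produces an eigenvector. The only cosmetic differences are that the paper tests the positivity inequality at $y = \alpha x + p(T)x$ (varying $\alpha\in\CC$) to force $p(T)x=0$ in one stroke rather than your two-step argument via $(p(T)+p(T)^*)x = 0$ and $\langle p(T)^2 x,x\rangle = -\|p(T)x\|^2$, and it invokes the spectral mapping theorem for $\sigma_p$ in place of your preliminary linear-independence reduction and explicit factoring $p(z) = zq(z)$.
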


\begin{proof}
Suppose $x\in H$ satisfies (ii). Then, by (i), there exist complex numbers $c_1,\dots,c_n$ such that
$$
{\rm Re}\,\sum_{j=1}^n \langle c_jT^ju,u\rangle\ge 0
$$
for all $u\in H$.
Let $S=\sum_{j=1}^nc_jT^j$. Let $\alpha\in\CC$ and $y=\alpha x+Sx$.
Then
$$
0\le
{\rm Re}\, \langle Sy,y\rangle =
{\rm Re}\,\langle \alpha Sx+S^2x,Sx\rangle=
{\rm Re}\,\Bigl(\alpha\|Sx\|^2+\langle S^2x,Sx\rangle\Bigr).
$$
Since this is true for all $\alpha\in \CC$, we have $Sx=0$. So $0\in\sigma_p(S)=\sigma_p(\sum_{j=1}^nc_jT^j)=\bigl\{\sum_{j=1}^n c_j\lambda^j:\lambda\in\sigma_p(T)\bigr\}$ by the spectral mapping theorem for the point spectrum. Hence $\sigma_p(T)\ne\emptyset$.
\end{proof}

\begin{corollary}\label{cor}
Let $T\in B(H)$, $\sigma_p(T)=\emptyset$. Suppose that for all $n \in \mathbb N$ one has $(0,\dots, 0) \in W(T,\dots,T^n)$. Then
$$
(0,\dots,0)\in{\rm Int}\,{\rm conv}\, W(T,\dots,T^n)
$$
for all $n\in\NN$.
\end{corollary}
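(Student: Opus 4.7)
The plan is to derive Corollary \ref{cor} as an immediate contrapositive application of the preceding Proposition. The only mild subtlety is matching indices: the Proposition's condition (ii) demands orthogonality up to $T^{2n}x$, not merely $T^nx$, so I need to feed into it the hypothesis applied at the doubled index.

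First I would fix $n\in\NN$ and apply the standing hypothesis with the index $2n$ in place of $n$. This gives
$(0,\dots,0)\in W(T,T^2,\dots,T^{2n})$, i.e., a unit vector $x\in H$ with $\langle T^k x,x\rangle=0$ for every $k=1,\dots,2n$. Equivalently, $x\perp Tx,T^2x,\dots,T^{2n}x$, which is precisely condition (ii) of the preceding Proposition (for the index $n$).

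Now I would argue by contraposition. Suppose that
$(0,\dots,0)\notin\Int\conv\, W(T,T^2,\dots,T^n)$. Together with the vector $x$ produced above, this furnishes both hypotheses (i) and (ii) of the Proposition, and so the Proposition yields $\sigma_p(T)\ne\emptyset$, contradicting our assumption $\sigma_p(T)=\emptyset$. Hence
$(0,\dots,0)\in\Int\conv\, W(T,T^2,\dots,T^n),$
as required, for every $n\in\NN$.

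The only potential obstacle is a bookkeeping one: one must verify that applying the hypothesis at $2n$ (rather than $n$) is legitimate, but this is explicitly granted by the assumption that $(0,\dots,0)\in W(T,\dots,T^m)$ \emph{for all} $m\in\NN$. No additional analytical input is required beyond the Proposition itself.
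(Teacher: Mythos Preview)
Your argument is correct and is exactly the contrapositive deduction from the preceding Proposition that the paper intends; the paper gives no separate proof, treating the corollary as immediate. The one point worth noting, which you handled correctly, is the need to invoke the hypothesis at the index $2n$ to produce condition (ii) of the Proposition.
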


\section{Joint numerical ranges revisited}
In this section we use the results proved above to provide alternative and, we believe, sometimes simpler proofs
of the theorems describing essential and so-called infinite numerical ranges for tuples in terms of their compressions and higher rank numerical ranges.
The results were (essentially) obtained in
\cite{Li09} and \cite{Li11}, see also
\cite{Li08},  \cite{Roldan}, and \cite{Wo08} for their single operator analogues,
and \cite{MullerSt} for complementary results.
Moreover, our techniques allow us
to prove several new results of independent interest.

Let for the rest of this section $H$ will stand for an infinite-dimensional separable Hilbert space. We start with general considerations on joint numerical ranges.
Recall that the joint numerical range $W(\mathcal T)$ is, in general, neither convex nor closed.
Thus, it makes sense to describe the closed convex hull of $W(\mathcal T)$ in terms of $W(\mathcal T)$
and the related set $W_e(\mathcal T).$
The following statement is an extension of a similar theorem due to Lancaster for single operators \cite{Lancaster}.
Its proof is based on an idea of Williams from \cite{Williams}.
For a different, geometrical proof of the statement see  \cite{Cho} (and also \cite[Theorem 2.1 and Corollary 2.3]{Chan} for related results).
\begin{theorem}
Let $\mathcal T =(T_1,\dots,T_n)\in B(H)^n$. Then
$$
\overline{\rm conv}\, W(\mathcal T) = {\rm conv}\,\bigl(W(\mathcal T)\cup W_e(\mathcal T)\bigr).
$$
\end{theorem}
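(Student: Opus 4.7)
The inclusion $\conv(W(\mathcal T)\cup W_e(\mathcal T))\subset\overline{\rm conv}\, W(\mathcal T)$ is automatic, since both $W(\mathcal T)$ and $W_e(\mathcal T)$ lie in $\overline{W(\mathcal T)}$, which in turn sits in the closed convex set $\overline{\rm conv}\, W(\mathcal T)$. My plan for the reverse direction is to establish the pointwise inclusion
\[
\overline{W(\mathcal T)}\subset\conv\bigl(W(\mathcal T)\cup W_e(\mathcal T)\bigr)
\]
and then take convex hulls. Since $\overline{W(\mathcal T)}\subset\CC^n$ is compact, $\conv\,\overline{W(\mathcal T)}$ is already closed, hence
\[
\overline{\rm conv}\, W(\mathcal T)=\conv\,\overline{W(\mathcal T)}\subset\conv\bigl(W(\mathcal T)\cup W_e(\mathcal T)\bigr),
\]
which finishes the argument.

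To prove the pointwise inclusion, fix $\mu\in\overline{W(\mathcal T)}$ and choose unit vectors $(x_k)$ with $\langle\mathcal T x_k,x_k\rangle\to\mu$. After passing to a weakly convergent subsequence, $x_k\rightharpoonup x$ for some $x\in H$, and I would split the argument on $t:=\|x\|^2\in[0,1]$. If $t=1$, then $\|x_k\|\to\|x\|$ combined with weak convergence forces $x_k\to x$ in norm, so $\mu=\langle\mathcal T x,x\rangle\in W(\mathcal T)$. If $t=0$, I would run a Williams-type extraction: inductively thin $(x_k)$ so that the pairwise inner products of the subsequence decay summably (possible because $x_k\rightharpoonup 0$), and then Gram--Schmidt delivers an orthonormal sequence $(y_j)$ with $\|y_j-x_{k_j}\|\to 0$, whence $\langle\mathcal T y_j,y_j\rangle\to\mu$ and $\mu\in W_e(\mathcal T)$.

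The remaining case $0<t<1$ is the heart of the proof. Set $e=x/\sqrt t$ and $\alpha_k=\langle x_k,e\rangle$; writing $x_k=\alpha_k e+\beta_k z_k$ with $z_k\perp e$, $\|z_k\|=1$ and $\beta_k=\|x_k-\alpha_k e\|\ge 0$, one has $\alpha_k\to\sqrt t$, $\beta_k\to\sqrt{1-t}$, and $z_k\rightharpoonup 0$ (since $x_k-\alpha_k e\rightharpoonup 0$ while $\beta_k$ is eventually bounded below). Expanding
\[
\langle T_jx_k,x_k\rangle=|\alpha_k|^2\langle T_je,e\rangle+\alpha_k\beta_k\langle T_je,z_k\rangle+\overline{\alpha_k}\beta_k\langle T_jz_k,e\rangle+\beta_k^2\langle T_jz_k,z_k\rangle,
\]
both cross-terms vanish because $z_k\rightharpoonup 0$ implies $\langle T_je,z_k\rangle\to 0$ and $\langle T_jz_k,e\rangle=\overline{\langle T_j^*e,z_k\rangle}\to 0$. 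Passing to a further subsequence along which $\langle\mathcal T z_k,z_k\rangle\to\nu$, the $t=0$ case applied to $(z_k)$ places $\nu\in W_e(\mathcal T)$, and I obtain $\mu=t\,\langle\mathcal T e,e\rangle+(1-t)\,\nu$, exhibited as a convex combination from $W(\mathcal T)\cup W_e(\mathcal T)$.

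The only technically delicate ingredient is the weak-null-to-orthonormal extraction that preserves the limit of $\langle\mathcal T\,\cdot\,,\,\cdot\,\rangle$; this is a standard Gram--Schmidt perturbation, and it survives the passage to tuples since each $T_j$ is bounded and the perturbation estimates act coordinatewise. No commutativity or structural hypothesis on $\mathcal T$ is required.
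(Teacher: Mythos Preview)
Your argument is correct. The weak-compactness split on $t=\|x\|^2$, the vanishing of the cross terms via $z_k\rightharpoonup 0$, the Gram--Schmidt extraction from a weakly null unit sequence, and the reduction $\overline{\conv}\,W(\mathcal T)=\conv\,\overline{W(\mathcal T)}$ (Carath\'eodory plus compactness in $\CC^n$) all go through as stated, and no step relies on any structure of $\mathcal T$ beyond boundedness of each $T_j$.

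Your route is genuinely different from the paper's. The paper argues via the state-space identification $\overline{\conv}\,W(\mathcal T)=V(\mathcal T,B(H))$: given $\lambda\in\overline{\conv}\,W(\mathcal T)$ it picks a state $f$ with $f(T_j)=\lambda_j$, invokes Dixmier's decomposition $f=\alpha f_0+(1-\alpha)f_1$ into a singular state $f_0$ (annihilating $\mathcal K(H)$) and a normal state $f_1$ (given by a positive trace-class $S$), then places $f_0(\mathcal T)\in W_e(\mathcal T)$ via $W_e(\mathcal T)=V(\pi(\mathcal T),B(H)/\mathcal K(H))$ and $f_1(\mathcal T)\in\conv W(\mathcal T)$ as an \emph{infinite} convex combination $\sum_k\beta_k\langle\mathcal T e_k,e_k\rangle$ (using that convex subsets of $\CC^n$ are closed under countable convex combinations). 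Your proof is more elementary and self-contained: it avoids Dixmier's theorem, the Calkin-algebra description of $W_e$, and the infinite-convex-combination lemma, at the price of the sequential extraction step. The paper in fact notes that a geometric proof along different lines exists (attributed to Takaguchi--Ch\=o); your argument is in that spirit. A minor trade-off is that the paper's approach directly handles every point of $\overline{\conv}\,W(\mathcal T)$, whereas you first treat $\overline{W(\mathcal T)}$ and then pass through $\conv\,\overline{W(\mathcal T)}=\overline{\conv}\,W(\mathcal T)$; both are clean.
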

\begin{proof}
Since $W_e(\mathcal T)\subset \overline{W(\mathcal T)}$, we have the inclusion
``$\supset$''.

Conversely, let $(\lambda_1,\dots,\lambda_n)\in \overline{\rm conv}\, W(\mathcal T).$ Recall that by \eqref{calkin} one has
$\overline{\rm conv}\, W(\mathcal T)=V(\mathcal T, B(H))$. So there exists a state $f\in B(H)^*$ such that $f(T_j)=\lambda_j$ for all $j=1,\dots,n$. By Dixmier's theorem \cite{Dixmier},  one has a decomposition $f=\alpha f_0+(1-\alpha)f_1$, where $0\le\alpha \le 1$ and $f_0, f_1$ are states on $B(H)$ such that $f_0$ annihilates the ideal of compact operators $\mathcal K(H)$ on $H$, and  $f_1(A):={\rm trace}\, (AS)$ for a fixed trace class operator $S\ge 0$ and all $A\in B(H).$  Hence there exist an orthonormal system $(e_k)_{k \ge 1}\subset H$ and positive numbers $\beta_k$ with $\sum_{k\ge 1}\beta_k=1$ such that $S=\sum_{k\ge 1} \beta_k e_k\otimes e_k$.
Thus
$$f_1(A)=\sum_{k \ge 1}\beta_k\langle A e_k,e_k\rangle$$
for all $A\in B(H)$.
Recall that  a convex set in $\mathbb C^n$  is invariant
with respect to taking infinite convex combinations of its elements (note that the set may be not closed),
see e.g. \cite{Cook} or \cite{Rubin}. Thus, since clearly $\{(T e_k,e_k): k \ge 1\} \subset  {\rm conv}\, W(\mathcal T),$ we have
$$f_1(\mathcal T) \in {\rm conv}\, W(\mathcal T).$$

By \eqref{calkin1},  $V\bigl(\pi(T_1),\dots, \pi(T_n), B(H)/\mathcal K(H)\bigr)=W_e(\mathcal T)$,
where $\pi: B(H) \to B(H)/\mathcal K(H)$ is the quotient map.
Hence
$f_0(\mathcal T) \in W_e(\mathcal T),$
and thus
$$
(\lambda_1,\dots,\lambda_n)= (f(T_1),\dots, f(T_n))\in
{\rm conv}\,\bigl(W(\mathcal T)\cup W_e(\mathcal T)\bigr).
$$
\end{proof}

Despite the properties of joint numerical ranges are much more involved than the properties
of numerical ranges for single operators, joint numerical ranges can be described in terms of other numerical ranges
that are somewhat simpler to deal with.
Let us recall now the definition of higher rank numerical ranges.

\begin{definition}
Let $\mathcal T=(T_1,\dots,T_n)\in B(H)^n$. Let $k\in\NN\cup\{\infty\}$. Define the $k$-th rank numerical range of $\mT$ as the set of all $\la=(\la_1,\dots,\la_n)\in\CC^n$ such that there exists a subspace $L\subset H$ with $\dim L=k$ satisfying
$$
P_LT_jP_L=\la_jP_L, \qquad j=1,\dots,n.
$$
The set $W_{\infty}(\mathcal T)$ is called the infinite numerical range of $\mathcal T.$
\end{definition}

Clearly $W_1(\mT)$ is the usual joint numerical range and
$$
W_1(\mT)\supset W_2(\mT)\supset\cdots\supset W_\infty(\mT).
$$
It is easy to see that $W_\infty(\mT)$ can be empty even for $n=1.$ (Consider an injective compact operator $T_1$.)
Using \cite[Corollary 4.2]{MullerT} and the definition of $W_e(\mT)$ it follows that
\begin{equation}\label{einf}
\Int (W_e(\mathcal T)) \subset W_{\infty}(\mathcal T) \subset W_e(\mT)
\end{equation}
 for any $\mathcal T \in B(H)^n$. So $W_\infty(\mT)$ is large whenever $W_e(\mathcal T)$ is large.
On the other hand, in infinite-dimensional spaces the $k$-th rank numerical range is  always nonempty for each $k \in \mathbb N,$
as the following proposition (implicit in \cite{Li11}) shows.

\begin{proposition}\label{non-empty}
Let
$\mT=(T_1,\dots,T_n)\in B(H)^n$. Then $W_k(\mT)\ne\emptyset$ for all $k\in\NN$.
\end{proposition}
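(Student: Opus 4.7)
My plan is to split into two cases based on whether $\Int W_e(\mT)\ne\emptyset$ in $\CC^n$, using inclusion \eqref{einf} for the first case and a dimensional reduction for the second. Since $H$ is infinite-dimensional and separable, $W_e(\mT)$ is a nonempty compact convex subset of $\CC^n\cong\RR^{2n}$.

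\emph{Generic case.} If $\Int W_e(\mT)\ne\emptyset$, then by \eqref{einf} one has $\Int W_e(\mT)\subset W_\infty(\mT)$. For $\la\in\Int W_e(\mT)$ there is an infinite-dimensional subspace $L_\infty\subset H$ with $P_{L_\infty}T_jP_{L_\infty}=\la_j P_{L_\infty}$ for every $j$. Any $k$-dimensional subspace $L\subset L_\infty$ satisfies $P_LT_jP_L=P_LP_{L_\infty}T_jP_{L_\infty}P_L=\la_jP_L$, so $\la\in W_k(\mT)$ and $W_k(\mT)\ne\emptyset$.

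\emph{Degenerate case.} If $\Int W_e(\mT)=\emptyset$, then $W_e(\mT)$ lies in a proper real-affine subspace of $\RR^{2n}$. By \eqref{calkin1}, this yields real numbers $\alpha_j,\beta_j$ (not all zero) and $c\in\RR$ such that the selfadjoint operator $S=\sum_{j=1}^n(\alpha_j\Re T_j+\beta_j\Im T_j)-cI$ lies in $\mathcal K(H)$. I would use this relation to express one ``real or imaginary part'' of the $T_j$'s as a real-linear combination of the remaining ones modulo a compact correction, thus reducing the ambient dimension from $2n$ to $2n-1$. Iterating, one either falls into the generic case in a reduced ambient space, or arrives at $\mT=\Lambda\cdot I+\mathcal K$ for some $\Lambda\in\CC^n$ and compact tuple $\mathcal K\in\mathcal K(H)^n$, in which case $W_k(\mT)=\Lambda+W_k(\mathcal K)$. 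In the latter subcase, $W_k(\mathcal K)\ne\emptyset$ is obtained by an inductive construction of orthonormal vectors $e_1,\dots,e_k$ in cofinite subspaces, using Lemma \ref{compres} (since $W_e(\mathcal K)=\{(0,\dots,0)\}$) together with Zenger's Lemma to enforce simultaneous scalar compressions, very much in the spirit of the proof of Theorem \ref{numranges}.

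The main obstacle is the bookkeeping in the degenerate case: after the linear reduction, the $k$-dimensional subspace constructed for the reduced tuple must also make the eliminated operator compress to a scalar, which introduces a compact correction term that must be simultaneously controlled. I expect this to require an averaging construction analogous to the one in the proof of Theorem \ref{numranges}, where the vectors witnessing $0\in W_e$ on cofinite subspaces are combined via Zenger's Lemma to absorb the compact remainders.
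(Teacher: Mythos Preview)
Your generic case is fine, but the degenerate case has a genuine gap that I do not see how to close along the lines you sketch. The reduction step replaces one selfadjoint component by a real-linear combination of the remaining ones \emph{plus a compact operator} $K$. To recover membership in $W_k(\mT)$ you must find a $k$-dimensional subspace $L$ on which not only the reduced tuple but also $K$ compresses to a scalar; this is precisely a $W_k$-nonemptiness question for a tuple of the same length as before, so nothing has been gained. In the terminal subcase $\mT=\Lambda I+\mK$ with $\mK$ compact you are always in the degenerate branch (since $W_e(\mK)=\{0\}$ has empty interior), so the iteration does not terminate in an easier problem. Moreover, the Zenger-type averaging from Theorem~\ref{numranges} that you invoke produces a \emph{single} unit vector $x$ with $\langle\mT x,x\rangle=\la$; it gives no control over the off-diagonal entries $\langle T_j x,y\rangle$ for $x\perp y$, which is exactly what is needed for $P_L\mT P_L=\la P_L$ on a $k$-dimensional $L$. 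There is no obvious way to iterate that construction to obtain $k$ orthonormal vectors with the required mutual orthogonality relations while keeping the diagonal values equal.

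The paper's argument avoids all of this by a short induction on $n$ that never touches $W_e$ or compact perturbations. For $n=1$ one quotes the finite-dimensional result of Li--Poon--Sze \cite{Li091}: $W_k(T_1)\ne\emptyset$ whenever the ambient space has dimension at least $3k-2$. For the inductive step one takes $\la'\in W_{4k}(T_1,\dots,T_{n-1})$, picks a $4k$-dimensional subspace $L$ realizing it, and then applies the $n=1$ case to the compression $P_LT_nP_L$ on $L$ (since $4k\ge 3k-2$) to obtain a $k$-dimensional $L'\subset L$ with $P_{L'}T_nP_{L'}=\la_n P_{L'}$; the first $n-1$ compressions are automatically scalar on $L'\subset L$. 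This is both shorter and self-contained once the single-operator input is granted.
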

\begin{proof}
We prove the statement by induction on $n$.
 By e.g. \cite[Theorem 1]{Li091}, one infers that $W_k(T_1)\ne\emptyset$ for any operator $T_1\in B(K)$ with $\dim K\ge 3k-2$. In particular,
 $W_k(T_1)\ne\emptyset$ for all $k\in\NN$.
Suppose the statement is true for some $n-1\ge 1$. Let $k\in\NN$. By the induction assumption, $W_{4k}(T_1,\dots,T_{n-1})\ne\emptyset$. So there exists $(\la_1,\dots,\la_{n-1})\in \CC^{n-1}$ and a subspace $L\subset H$ with $\dim L= k$ such that $$P_LT_jP_L=\la_j P_L, \qquad j=1,\dots,n-1.$$ By the same result from \cite{Li091}, $W_k(P_LT_nP_L)\ne\emptyset$. So there exists $\la_n\in\CC$ and a subspace $L'\subset L$ with $\dim L'=k$ and $P_{L'}T_nP_{L'}=\la_n P_{L'}$.  Hence $P_{L'}T_iP_{L'}=\la_i P_{L'}, 1 \le i \le n,$ so that $(\la_1,\dots,\la_{n})\in W_k(T_1,\dots,T_n)$ and $W_k(T_1,\dots,T_n)\ne\emptyset$.
\end{proof}

For $k<\infty$ the higher rank numerical ranges $W_k(\mT)$  are, in general, not convex. However, they are always star-shaped, as we prove below.
See \cite[Proposition 4.1]{Li11} for an analogous statement.

\begin{theorem}\label{star}
Let
$\mathcal T=(T_1,\dots,T_n)\in B(H)^n$. Then for every $k \in \mathbb N$ the set
$W_k(\mT)$  is star-shaped with star centers taken from $W_m(\mT)$ for any
$m>k(2n+1).$
\end{theorem}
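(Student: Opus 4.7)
The plan is to show that any $\mu\in W_m(\mT)$ with $m>k(2n+1)$, witnessed by a subspace $M\subset H$ of dimension $m$ with $P_MT_jP_M=\mu_jP_M$, is a star center of $W_k(\mT)$. Fix an arbitrary $\la\in W_k(\mT)$ and a witnessing subspace $L\subset H$ of dimension $k$ with $P_LT_jP_L=\la_jP_L$. For each $t\in[0,1]$ I aim to produce a $k$-dimensional subspace $L_t$ realizing the segment point $\mu_t:=(1-t)\la+t\mu\in W_k(\mT)$; the strategy is to build $L_t$ as an orthogonal rotation of $L$ into $M$ by the angle $\theta=\arcsin\sqrt{t}$.

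Concretely, I would pick an orthonormal basis $e_1,\dots,e_k$ of $L$, a (yet to be chosen) orthonormal family $f_1,\dots,f_k\subset M$, and set $g_i:=\cos\theta\, e_i+\sin\theta\, f_i$. Using $\langle T_je_i,e_l\rangle=\la_j\de_{il}$ (from $P_LT_jP_L=\la_jP_L$) and $\langle T_jf_i,f_l\rangle=\mu_j\de_{il}$ (since $f_i,f_l\in M$), a direct expansion yields
\[
\langle T_jg_i,g_l\rangle=\bigl((1-t)\la_j+t\mu_j\bigr)\de_{il}+\cos\theta\sin\theta\bigl(\langle T_je_i,f_l\rangle+\langle T_jf_i,e_l\rangle\bigr).
\]
Thus if $L':=\bigvee_{i=1}^k f_i$ is orthogonal to $L$ (which makes $\{g_i\}$ orthonormal) and to $T_jL$ and $T_j^*L$ for all $j$, then the cross terms vanish and $L_t:=\bigvee_{i=1}^k g_i$ is a $k$-dimensional witness for $\mu_t\in W_k(\mT)$. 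Consolidating the orthogonality requirements, what I need is $L'\subset M$ of dimension $k$ with $L'\perp W$, where
\[
W:=L+\sum_{j=1}^n T_jL+\sum_{j=1}^n T_j^*L.
\]

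The main obstacle, and the point where the hypothesis $m>k(2n+1)$ enters, is producing such an $L'$. Since $\dim W\le k+2kn=k(2n+1)$, the rank-nullity theorem applied to $P_W\colon M\to W$ gives $\dim(M\cap W^\perp)\ge m-k(2n+1)\ge 1$. To extract a full $k$-dimensional $L'$ from $M\cap W^\perp$ I would proceed iteratively, picking unit vectors $f_1,\dots,f_k$ one at a time inside $M\cap W^\perp$ and peeling off the already-chosen span at each step; the sharpness needed to make this fit within the slack $m-k(2n+1)$ comes from exploiting that $P_LT_j|_L=\la_jI_L$ forces $T_jL\subseteq\la_jL+L^\perp$, so each $T_jL$ (and each $T_j^*L$) contributes at most $k$ fresh directions to $W$ beyond $L$ itself, and the overlap with $L$ is absorbed into the $L$-summand. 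This dimensional bookkeeping is the technical heart of the proof; once $L'$ is in hand, the rotation formula above finishes the argument and certifies that $W_k(\mT)$ is star-shaped with $\mu$ as a star center.
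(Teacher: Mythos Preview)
Your approach is the same as the paper's: both fix an orthonormal basis of the $W_k$-witness $L$, seek an orthonormal $k$-frame $f_1,\dots,f_k$ inside the $W_m$-witness $M$ orthogonal to $W:=L+\sum_jT_jL+\sum_jT_j^*L$, and interpolate via $g_i=\sqrt{1-t}\,e_i+\sqrt{t}\,f_i$ (the paper writes $u_s=\sqrt{t}\,x_s+\sqrt{1-t}\,y_s$). Your expansion of $\langle T_jg_i,g_l\rangle$ is correct, and the cross terms do vanish once $L'=\bigvee f_i$ is orthogonal to $W$.

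The gap is in your dimension count. The observation $T_jL\subset\la_jL+L^\perp$ is true but does \emph{not} sharpen the estimate $\dim W\le k(2n+1)$: saying $P_LT_jL\subset L$ is exactly what yields $\dim(L+T_jL)\le 2k$, and that bound is already the one feeding into $k+2nk$. So after your ``refinement'' you still only know $\dim(M\cap W^\perp)\ge m-k(2n+1)$, which for the borderline value $m=k(2n+1)+1$ equals $1$, not $k$. Your iterative procedure of peeling off $f_1,\dots,f_k$ inside $M\cap W^\perp$ cannot produce $k$ orthonormal vectors unless that intersection is at least $k$-dimensional to begin with; the argument as written needs $m\ge k(2n+2)$. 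The paper glosses over this same count --- its extra requirements $y_s\perp T_jy_i,\,T_j^*y_i$ are in fact redundant for $y_s\in M$ once $y_s\perp y_i$, since $P_MT_jy_i$ and $P_MT_j^*y_i$ are scalar multiples of $y_i$, so the effective constraint there is identical to yours. The star-shapedness conclusion survives for $m\ge k(2n+2)$ (and hence whenever $m$ may be taken large, as in all later uses), but neither argument establishes the stated threshold $m>k(2n+1)$.
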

\begin{proof}
Let $k\in\NN$.
Fix $m >k(2n+1).$
By Proposition \ref{non-empty},
it follows that $W_m(\mT)\ne\emptyset$, so we can choose $\la\in W_m(\mT)\subset W_k(\mT)$. We show that $W_k(\mT)$ is star-shaped with the center $\la$.

Let $\mu\in W_k(\mT)$ and $t\in[0,1]$. There exists a subspace $L\subset H$ with $\dim L=k$ and $P_L\mT P_L=\mu P_L$.
Let $x_1,\dots,x_k$ be an orthonormal set in $L$.
Let $M\subset H$ satisfy $\dim M=m$ and $P_M\mT P_M=\la P_M$.
We construct an orthonormal set $y_1,\dots,y_k\in M$ in the following way:
Let $y_1$ be any unit vector in $M\cap\{L, T_jL, T_j^*L:1\le j\le n\}^\perp$. Choose inductively unit vectors $y_s\in M, 2\le s \le k,$ such that
\begin{align*}
y_s &\perp \{ L, T_jL, T^*_jL: \quad 1\le j\le n,\} \\
y_s &\perp\{y_i,T_j y_i,T_j^*y_i:\quad 1\le i\le s-1, 1\le j\le n\}.\
\end{align*}
Let
$$
u_s:=\sqrt{t}x_s+\sqrt{1-t}y_s, \quad s=1,\dots,k \qquad \text{and} \qquad  L':=\bigvee_{s=1}^ku_s.
 $$
 Clearly $\dim L'=k$ and the vectors $u_1,\dots,u_k$ form an orthonormal basis in $L'$. If $y\in L', \|y\|=1$, then $y=\sum_{s=1}^k\al_su_s$ for some $\{\al_s: 1 \le s \le k\}\subset \mathbb C$ with $\sum_{s=1}^k|\al_s|^2=1$. We have
\begin{align*}
\langle \mT y,y\rangle=&
\Bigl\langle \mT\sum_{s=1}^k\al_s\sqrt{t}x_s,\sum_{s=1}^k\al_s\sqrt{t}x_s\Bigr\rangle
+
2\Re \Bigl\langle \mT\sum_{s=1}^k\al_s\sqrt{t}x_s,\sum_{s=1}^k\al_s\sqrt{1-t}y_s\Bigr\rangle\\
+&
\Bigl\langle \mT\sum_{s=1}^k\al_s\sqrt{1-t}y_s,\sum_{s=1}^k\al_s\sqrt{1-t}y_s\Bigr\rangle\\
=&\mu\Bigl\|\sum_{s=1}^k\al_s\sqrt{t}x_s\Bigr\|^2+
\la\Bigl\|\sum_{s=1}^k\al_s\sqrt{1-t}x_s\Bigr\|^2
\\
=&
t\mu+(1-t)\la.
\end{align*}
Hence $t\mu+(1-t)\la\in W_k(\mT),$ so the set $W_k(\mT)$ is star-shaped with the center at $\la \in W_m(\mT),$ as required.
\end{proof}

\begin{remark} It is easy to see that the closure $\overline{W_k(\mT)}$ is also star-shaped.
\end{remark}

The infinite and essential numerical range  have ``infinite-dimensional'' nature. However
it is possible to describe them in terms of ``finite-dimensional'' higher rank numerical ranges.
Moreover, we  characterize  the infinite and essential numerical ranges of tuples by means of compressions
of tuples to infinite-dimensional subspaces.

The equivalence (i)$\Leftrightarrow$(ii) in the proposition below was fist proved in \cite[Theorem 4.1]{Li11}.
\begin{proposition}\label{infinite}
Let $\mathcal T=(T_1,\dots,T_n)\in B(H)^n$ and $\la=(\la_1,\dots,\la_n)\in\CC^n$. The following statements are equivalent:
\begin{itemize}
\item [(i)] $\la\in W_\infty(\mT)$;

\item [(ii)] $\la\in\bigcap_{k=1}^\infty W_k(\mT)$;

\item [(iii)] for every subspace $M\subset H$ of finite codimension there exists a unit vector $x\in M$ such that $\langle\mT x,x\rangle=\la$.
\end{itemize}
\end{proposition}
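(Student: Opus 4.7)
I would prove the three equivalences by establishing the cyclic implications (i) $\Rightarrow$ (ii) $\Rightarrow$ (iii) $\Rightarrow$ (i).

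The implication (i) $\Rightarrow$ (ii) is immediate: if $L$ is an infinite-dimensional subspace with $P_L T_j P_L = \la_j P_L$ for all $j$, then any $k$-dimensional subspace $L_k \subset L$ inherits the same diagonal property, so $\la \in W_k(\mT)$ for every $k \in \NN$.

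For (ii) $\Rightarrow$ (iii), let $M \subset H$ have finite codimension, and set $k := \codim M + 1$. By hypothesis $\la \in W_k(\mT)$, so there exists a $k$-dimensional subspace $L_k$ with $P_{L_k} T_j P_{L_k} = \la_j P_{L_k}$. A dimension count forces $L_k \cap M \ne \{0\}$, so we may pick a unit vector $x \in L_k \cap M$. Then $x = P_{L_k} x$ gives $\langle T_j x, x \rangle = \langle P_{L_k} T_j P_{L_k} x, x \rangle = \la_j$ for each $j$, which is the desired conclusion.

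The main work is in (iii) $\Rightarrow$ (i), which I would handle by an inductive construction producing an orthonormal sequence $(e_k)_{k \ge 1}$ such that $\langle T_j e_i, e_s \rangle = \la_j \de_{is}$ for all $i, s \in \NN$ and $1 \le j \le n$. Suppose $e_1, \dots, e_k$ have already been constructed with this property. Consider the finite-codimensional subspace
$$M_{k+1} := \Bigl(\bigvee_{i=1}^k \{e_i, T_j e_i, T_j^* e_i : 1 \le j \le n\}\Bigr)^\perp.$$
By (iii) there is a unit vector $e_{k+1} \in M_{k+1}$ with $\langle \mT e_{k+1}, e_{k+1}\rangle = \la$. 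The fact that $e_{k+1} \perp e_i$ gives orthonormality; the fact that $e_{k+1} \perp T_j^* e_i$ yields $\langle T_j e_{k+1}, e_i \rangle = 0$; and the fact that $e_{k+1} \perp T_j e_i$ yields $\langle T_j e_i, e_{k+1} \rangle = 0$. Setting $L := \overline{\bigvee_{k \ge 1} e_k}$, a direct computation on the orthonormal basis $(e_k)$ of $L$ shows $P_L T_j P_L e_i = \sum_s \langle T_j e_i, e_s \rangle e_s = \la_j e_i$, and extending by linearity and continuity gives $P_L T_j P_L = \la_j P_L$, so $\la \in W_\infty(\mT)$.

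The only delicate point is ensuring that the diagonal property on a basis upgrades to the operator identity $P_L T_j P_L = \la_j P_L$ on the whole subspace, which is why I include \emph{both} $T_j e_i$ and $T_j^* e_i$ in the list of vectors to be avoided; this guarantees that the off-diagonal matrix entries of each $T_j$ in the basis $(e_k)$ all vanish, not just those on one side of the diagonal.
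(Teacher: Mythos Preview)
Your proof is correct and follows essentially the same route as the paper: the paper also proves (i)$\Rightarrow$(ii) trivially, (ii)$\Rightarrow$(iii) by the same dimension-count intersection argument, and (iii)$\Rightarrow$(i) by the identical inductive construction of an orthonormal sequence orthogonal to $\{e_i, T_j e_i, T_j^* e_i\}$ at each step. The only cosmetic difference is that the paper verifies $P_L\mT P_L=\la P_L$ by computing $\langle \mT y,y\rangle$ for a generic unit vector $y\in L$, whereas you verify it via the matrix entries $\langle T_j e_i, e_s\rangle$; both are equivalent and your closing remark about needing both $T_j e_i$ and $T_j^* e_i$ is exactly the point.
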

\begin{proof}
The implication
(i)$\Rightarrow$(ii) is clear.
\smallskip

(ii)$\Rightarrow$(iii):
Let $M\subset H$ be a subspace of finite codimension. Let $k\in\NN$, $k>\codim M$. By  (ii), there exists a subspace $F\subset H$ with $\dim F=k$ and $P_F\mT P_F=\la P_F$. Then $F\cap M\ne\{0\},$ and any unit vector in $F\cap M$ satisfies {\it (iii)}.
\smallskip

(iii)$\Rightarrow$(i):
Using (iii), find a unit vector $x_1\in H$ such that $\langle\mathcal T x_1,x_1\rangle=\la$. Construct inductively a sequence $(x_i)_{i \ge 1}\subset H$ of unit vectors such that
$$
x_{i+1}\perp\bigl\{x_{m}, T_jx_m,T_j^{*}x_m: 1\le m\le i, 1\le j\le n\bigr\}
$$
and
$$
\langle\mathcal T x_i,x_i\rangle=\la
$$
for all $i\in\NN$.
Let $L=\bigvee_{i=1}^\infty x_i$. Clearly $L$ is an infinite-dimensional subspace with an orthonormal basis $(x_i)_{i \ge 1}$.
Let $y\in L$, $\|y\|=1$, so that $y=\sum_{i=1}^\infty\al_i x_i$ where $\sum_{i=1}^\infty|\al_i|^2=1.$
Then
$$
\langle \mT y, y\rangle=
\sum_{i=1}^\infty |\al_i|^2 \langle \mT x_i,x_i\rangle=\la\sum_{i=1}^\infty|\al_i|^2=\la,
$$
so that $P_L\mathcal  T P_L=\mu P_L$.
\end{proof}

Incidentally, in the general setting of operator tuples, Proposition \ref{infinite} gives a partial answer to an old question of Fillmore, Stampfli and Pearcy \cite[p. 190, Remark (4)]{Fillmore} on the description for $T \in B(H)$ of the set of $\la \in \mathbb C$ such that $P(T-\la)P=0$ for an infinite-rank projection $P.$ It can also be considered as a sharper version of \cite[Theorem 3.1.1]{Binding} where
an approximate version of the proposition has been proved.

The equivalence (i)$\Leftrightarrow$(ii) in the next result has been first obtained in \cite[Corollary 4.5]{Li11}.
\begin{proposition}\label{essential}
Let $\mathcal T=(T_1,\dots,T_n)\in B(H)^n$ and $\la=(\la_1,\dots,\la_n)\in\CC^n$. The following statements are equivalent:
\begin{itemize}
\item [(i)] $\la\in W_e(\mT)$;

\item [(ii)] $\la\in\bigcap_{k=1}^\infty \overline{W_k(\mT)}$;

\item [(iii)] for every $\de>0$ and every  subspace $M\subset H$ of finite codimension
there exists a unit vector $x\in M$ such that $\|\langle\mT x,x\rangle-\la\|<\de$.
\end{itemize}
\end{proposition}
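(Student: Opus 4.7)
The plan is to prove (i) $\Rightarrow$ (iii), (iii) $\Rightarrow$ (i), (ii) $\Rightarrow$ (iii), and (i) $\Rightarrow$ (ii), using only tools already at hand.

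The implication (i) $\Rightarrow$ (iii) is exactly Lemma \ref{compres}. For (iii) $\Rightarrow$ (i), I would mimic the inductive construction in the proof of Proposition \ref{infinite} but in an approximate form: set $M_k = \{x_1, \dots, x_{k-1}\}^\perp$, of finite codimension, and apply (iii) with tolerance $1/k$ to produce a unit vector $x_k \in M_k$ with $\|\langle \mathcal T x_k, x_k\rangle - \la\| < 1/k$. The resulting orthonormal sequence $(x_k)$ has moments converging to $\la$, placing $\la$ in $W_e(\mathcal T)$ by definition.

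For (ii) $\Rightarrow$ (iii), fix $\de > 0$ and $M \subset H$ of finite codimension, and pick $k > \codim M$. Since $\la \in \overline{W_k(\mathcal T)}$, there exist $\mu \in W_k(\mathcal T)$ with $\|\mu - \la\| < \de$ and a $k$-dimensional subspace $L$ with $P_L \mathcal T P_L = \mu P_L$. A dimension count forces $L \cap M \ne \{0\}$ (the map $P_{M^\perp}|_L$ cannot be injective as $\dim L > \dim M^\perp$), and any unit vector $x \in L \cap M$ satisfies $\langle \mathcal T x, x\rangle = \mu$, which lies within $\de$ of $\la$.

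The most delicate direction is (i) $\Rightarrow$ (ii). Starting from $\la \in W_e(\mathcal T)$ and a fixed $k$, I would use Lemma \ref{compres} inductively to build an orthonormal sequence $(y_i)$ such that each $y_{i+1}$ is perpendicular to the finite set $\{y_l, T_j y_l, T_j^* y_l : l \le i, 1 \le j \le n\}$ and $\|\langle \mathcal T y_{i+1}, y_{i+1}\rangle - \la\| < 1/(i+1)$. On the infinite-dimensional subspace $V := \overline{\mathrm{span}}\{y_i\}$, the orthogonality conditions ensure that $D_j := P_V T_j P_V$ is diagonal in the basis $(y_i)$ with entries $a_i^{(j)} = \langle T_j y_i, y_i\rangle \to \la_j$, so each $C_j := D_j - \la_j I_V$ is compact on $V$. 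Given $\e > 0$, compactness of $C_1, \dots, C_n$ supplies a finite-codimension subspace $L_0 \subset V$ with $\|P_{L_0} C_j P_{L_0}\| < \e$ for every $j$; Proposition \ref{non-empty} applied to the infinite-dimensional $L_0$ and the compressed tuple then yields a $k$-dimensional $L \subset L_0$ together with scalars $\nu_j$ satisfying $P_L C_j P_L = \nu_j P_L$, necessarily with $|\nu_j| < \e$. Since $L \subset V$, one has $P_L T_j P_L = P_L D_j P_L = (\la_j + \nu_j) P_L$, so $(\la_1 + \nu_1, \dots, \la_n + \nu_n) \in W_k(\mathcal T)$ lies within $\e$ of $\la$, whence $\la \in \overline{W_k(\mathcal T)}$. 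I expect the main obstacle to be exactly this step of upgrading an approximate diagonal compression on $V$ to an \emph{exact} scalar compression on a $k$-dimensional subspace; the route above sidesteps the heavier finite-dimensional higher-rank numerical range machinery (Choi--Kribs--Życzkowski, Li--Sze) by first making the compact remainders small on a large subspace, and then invoking Proposition \ref{non-empty} to turn them into scalars.
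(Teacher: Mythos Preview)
Your proposal is correct and follows essentially the same route as the paper: (i)$\Rightarrow$(iii) via Lemma~\ref{compres}, (ii)$\Rightarrow$(iii) by the dimension count $\dim L > \codim M$, (iii)$\Rightarrow$(i) by the obvious inductive orthonormal construction, and the hard step by building a doubly-orthogonal sequence $(y_i)$ on whose span the compression of $\mT$ is diagonal, then invoking Proposition~\ref{non-empty}. The only difference is cosmetic: for that last step the paper fixes $\de>0$ \emph{before} building the sequence, so that $\|P_V T_jP_V-\la_jI_V\|<2\de$ holds outright on $V=\bigvee y_i$ and Proposition~\ref{non-empty} applies immediately; you instead take tolerances $1/(i+1)\to 0$, obtain a compact remainder, and then pass to a finite-codimensional tail $L_0$ to make it small --- a harmless detour that the fixed-$\de$ choice avoids.
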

\begin{proof}
(iii)$\Rightarrow$(i) is clear.
\smallskip

(ii)$\Rightarrow$(iii):
Let $M\subset H$ be a subspace of a finite codimension and $\de>0$. Let $k\in\NN$, $k>\codim M$. By \emph{(ii)}, there exists
$\mu\in W_k(\mT)$ such that $\|\la-\mu\|<\de$.
Let $F$ be a subspace of $H$ with $\dim F=k$ and $P_F\mT P_F=\mu P_F$. Then $F\cap M\ne\{0\}$. Let $x\in F\cap M$ be any unit vector.  Then
$$
\|\langle\mT x,x\rangle-\la\|=
\|\mu-\la\|<\de.
$$
\smallskip

(i)$\Rightarrow$(iii): This is proved in Lemma \ref{compres}.
\smallskip

(iii)$\Rightarrow$(ii): Let $\la \in \mathbb C^n$ satisfy \emph{(iii)} for some $\de>0.$ Let $k\in\NN$ be fixed. Choose inductively an orthonormal sequence $(x_i)_{i \ge 1}\subset H$ such that
$$
x_{i+1}\perp \bigl\{x_m, T_jx_m, T_j^*x_m: 1\le m\le i, 1\le j\le n\bigr\}
$$
and
$$
\bigl\|\langle \mT x_{i+1},x_{i+1}\rangle-\la\bigr\|<\de.
$$
Let $L=\bigvee_{i=1}^\infty x_i$. Then $\dim L=\infty$ and $\|\langle\mT y,y\rangle-\la\|<\de$ for all $y\in L$, $\|y\|=1$.
Hence $$\bigl\|P_LT_jP_L-\la_jP_L\bigr\|<2\de,\qquad j=1,\dots,n.$$
Note that $W_k(P_L\mT P_L)\ne\emptyset$, and let $\mu\in W_k(P_L\mT P_L)$. Then $\|\mu-\la\|<2\de$.
Since $\de>0$ was arbitrary, $\la\in\overline{W_k(\mT)}$.
\end{proof}

One important application of Propositions \ref{infinite},  (iii) and \ref{essential}, (iii) is the proof of convexity for $W_\infty(\mT)$ and $W_e(\mT).$
By different arguments, the convexity of $W_e(\mathcal T)$ was first proved in \cite[Lemma 3.1]{Bercov} (see also \cite[Theorem 3.1]{Li09}), while the fact that  $W_\infty(\mT)$ is convex was discovered in \cite[Theorem 4.2]{Li11}, see also \cite{Roldan}.
\begin{theorem}
Let
$\mathcal T=(T_1,\dots,T_n)\in B(H)^n$. Then the sets $W_\infty(\mT)$ and $W_e(\mT)$ are convex.
\end{theorem}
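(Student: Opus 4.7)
The plan is to prove both convexity statements uniformly, using the characterizations (iii) from Propositions \ref{infinite} and \ref{essential} as the main tool. The common mechanism is to produce, from two vectors realizing two given elements of the range (on a prescribed subspace of finite codimension), a third vector which realizes their convex combination, by taking a suitable orthogonal superposition that kills the cross terms.

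Specifically, for $W_\infty(\mT)$, suppose $\la,\mu\in W_\infty(\mT)$ and let $t\in[0,1]$. I would fix any subspace $M\subset H$ of finite codimension and apply Proposition \ref{infinite}(iii) to $\la$ to obtain a unit vector $x_1\in M$ with $\langle\mT x_1,x_1\rangle=\la$. Then set
$$
M' := M\cap\bigl\{x_1,\,T_jx_1,\,T_j^*x_1:1\le j\le n\bigr\}^\perp,
$$
which still has finite codimension in $H$, and apply Proposition \ref{infinite}(iii) to $\mu$ to find a unit vector $x_2\in M'$ with $\langle\mT x_2,x_2\rangle=\mu$. By construction $x_1\perp x_2$ and $\langle T_jx_1,x_2\rangle=\langle T_jx_2,x_1\rangle=0$ for $1\le j\le n$. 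Consider
$$
x:=\sqrt{t}\,x_1+\sqrt{1-t}\,x_2\in M,\qquad \|x\|=1.
$$
The cross terms in the quadratic form vanish, so
$$
\langle\mT x,x\rangle=t\langle\mT x_1,x_1\rangle+(1-t)\langle\mT x_2,x_2\rangle=t\la+(1-t)\mu.
$$
Since $M$ was arbitrary of finite codimension, Proposition \ref{infinite}(iii) gives $t\la+(1-t)\mu\in W_\infty(\mT)$.

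For $W_e(\mT)$ the argument is identical, with Proposition \ref{essential}(iii) replacing Proposition \ref{infinite}(iii). Given $\la,\mu\in W_e(\mT)$, $t\in[0,1]$, $\de>0$ and a finite codimension subspace $M$, pick $x_1\in M$ with $\|\langle\mT x_1,x_1\rangle-\la\|<\de$, form $M'$ as above, then pick $x_2\in M'$ with $\|\langle\mT x_2,x_2\rangle-\mu\|<\de$. The same superposition $x=\sqrt{t}x_1+\sqrt{1-t}x_2$ is a unit vector in $M$, the cross terms again vanish by the orthogonality built into $M'$, and one obtains
$$
\|\langle\mT x,x\rangle-(t\la+(1-t)\mu)\|\le t\de+(1-t)\de=\de.
$$
So $t\la+(1-t)\mu$ satisfies condition (iii) of Proposition \ref{essential}, hence lies in $W_e(\mT)$.

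There is no substantial obstacle here once the characterizations (iii) are available; the only point requiring a bit of care is the choice of $M'$, so that $x_2$ is orthogonal not just to $x_1$ but also to $T_jx_1$ and $T_j^*x_1$ for every $j$, which is exactly what forces the off-diagonal terms $\langle T_jx_1,x_2\rangle$ and $\langle T_jx_2,x_1\rangle$ to vanish and makes the computation of $\langle\mT x,x\rangle$ clean. The finite codimension of $M'$ in $H$ (guaranteed because we remove only finitely many vectors' orthogonal complements from $M$) is precisely what allows the second application of condition (iii). Thus condition (iii) is ideally suited for convexity, and this uniform approach avoids the more technical matrix-theoretic considerations used in the earlier proofs in \cite{Bercov}, \cite{Li09}, \cite{Li11}.
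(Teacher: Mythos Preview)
Your proof is correct and follows essentially the same approach as the paper's own proof: pick a unit vector realizing $\la$ in the given finite-codimensional subspace, pass to the smaller finite-codimensional subspace orthogonal to $x_1,T_jx_1,T_j^*x_1$, pick a second vector realizing $\mu$ there, and take the superposition $\sqrt{t}\,x_1+\sqrt{1-t}\,x_2$. The paper only sketches the $W_e$ case with the phrase ``can be proved similarly using Proposition \ref{essential},'' whereas you spell it out, but the content is identical.
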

\begin{proof}
Let $\la,\mu\in W_\infty(\mT)$ and $t\in[0,1]$. Let $M\subset H$, $\codim M<\infty$. By Proposition \ref{infinite}, there exists $x\in M$, $\|x\|=1$ and $\langle \mT x,x\rangle=\la$.
Similarly there exists a unit vector $y\in M\cap\{x,T_jx,T_j^*x:1\le j\le n\}^\perp$ such that $\langle\mT y,y\rangle=\mu$.
Let $u=\sqrt{t}x+\sqrt{1-t}y$. Clearly $u\in M$, $\|u\|=1$ and
$$
\langle \mT u,u\rangle=t\langle\mT x,x\rangle+(1-t)\langle\mT y,y\rangle=t\la+(1-t)\mu.
$$
By Proposition \ref{infinite} again, $t\la+(1-t)\mu\in W_\infty(\mT)$.

The convexity of $W_e(\mT)$ can be proved similarly using Proposition \ref{essential} instead of Proposition \ref{infinite}.
\end{proof}

Clearly $W_e(\mT)$ is stable under compact perturbations. The behaviour of $W_\infty(\mT)$ under compact perturbations
is described in Theorem
\ref{infiniterange} below. To prove it, we need the following result of independent interest.
\begin{proposition}\label{propinf}
Let $\mT=(T_1,\dots,T_n)\in B(H)^n$, and let $\La\subset W_e(\mT)$ be a countable set. Then there exists an $n$-tuple $\mK=(K_1,\dots,K_n)$ of trace-class normal operators on $H$ such that
$$
\La\subset W_\infty(\mT-\mK).
$$
\end{proposition}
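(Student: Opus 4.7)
The plan is to enumerate $\La=\{\la^{(1)},\la^{(2)},\dots\}$ and construct a single orthonormal sequence $(x_k)_{k\ge 1}$ in $H$, together with a partition $\NN=\bigsqcup_m I_m$ with each $I_m$ infinite, such that: (a) the vector $x_k$ is a good approximate ``Lemma \ref{compres}-witness'' for $\la^{(m_k)}$, where $m_k$ is the unique index with $k\in I_{m_k}$; and (b) the whole family $\{x_k\}$ is ``mutually biorthogonal'' with respect to each $T_j$, i.e.\ $\langle T_j x_k,x_{k'}\rangle=0=\langle T_j x_{k'},x_k\rangle$ whenever $k\ne k'$. Once (a) and (b) are in force, $T_j$ will be diagonal on the closed span $N:=\bigvee_k x_k$ in the basis $(x_k)$, with diagonal entries $\alpha_{j,k}:=\langle T_j x_k,x_k\rangle$ close to $\la_j^{(m_k)}$, and it will suffice to define $K_j$ as the diagonal correction that kills the error.

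Concretely, fix a bijection $k\mapsto (m_k,i_k)$ of $\NN$ onto $\NN\times\NN$ (so each $I_m:=\{k:m_k=m\}$ is infinite), and build $(x_k)$ inductively as follows. Having chosen $x_1,\dots,x_{k-1}$, let
$$
M_k:=\bigl\{x_{k'},T_jx_{k'},T_j^{*}x_{k'}:1\le k'<k,\ 1\le j\le n\bigr\}^{\perp},
$$
which has finite codimension. Since $\la^{(m_k)}\in W_e(\mT)$, Lemma \ref{compres} (equivalently Proposition \ref{essential}~(iii)) yields a unit vector $x_k\in M_k$ with $\|\langle\mT x_k,x_k\rangle-\la^{(m_k)}\|<2^{-k}$. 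By construction, for $k'<k$ we have $\langle T_jx_k,x_{k'}\rangle=0$ (since $x_k\perp T_j^{*}x_{k'}$) and $\langle T_jx_{k'},x_k\rangle=0$ (since $x_k\perp T_jx_{k'}$), yielding (b). Set $L_m:=\bigvee_{k\in I_m}x_k$; each $L_m$ is an infinite-dimensional closed subspace.

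Now define $K_j\in B(H)$ to be diagonal in the orthonormal basis $(x_k)$ of $N$ with
$$
K_j x_k:=\bigl(\alpha_{j,k}-\la_j^{(m_k)}\bigr)x_k,\qquad k\in\NN,
$$
and $K_j:=0$ on $N^{\perp}$. Then $K_j$ is diagonal (hence normal) in a suitable orthonormal basis of $H$, and its trace norm is bounded by
$$
\sum_{k\ge 1}|\alpha_{j,k}-\la_j^{(m_k)}|\le\sum_{k\ge 1}\|\langle\mT x_k,x_k\rangle-\la^{(m_k)}\|<\sum_{k\ge 1}2^{-k}=1,
$$
so $\mK=(K_1,\dots,K_n)$ is a tuple of normal trace-class operators. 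Using (b) and the definition of $K_j$, for any $k,k'\in I_m$ we compute
$$
\langle(T_j-K_j)x_k,x_{k'}\rangle=\alpha_{j,k}\delta_{kk'}-(\alpha_{j,k}-\la_j^{(m)})\delta_{kk'}=\la_j^{(m)}\delta_{kk'},
$$
hence $P_{L_m}(T_j-K_j)P_{L_m}=\la_j^{(m)}P_{L_m}$ for every $j$, i.e.\ $\la^{(m)}\in W_\infty(\mT-\mK)$.

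The whole argument is really a single inductive construction; the only substantive point is ensuring (b) while also securing the tight estimate in (a), which is handled uniformly by Lemma \ref{compres} applied to the finite-codimension subspaces $M_k$. The geometric ``main obstacle'' is thus reduced to arranging that $K_j$ is simultaneously normal and trace-class, and this is precisely why we arranged the biorthogonality in (b): it forces $T_j|_N$ to be diagonal in $(x_k)$, so the correction $K_j$ is automatically diagonal (hence normal) and its trace norm collapses to the summable series $\sum_k|\alpha_{j,k}-\la_j^{(m_k)}|$.
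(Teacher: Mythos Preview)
Your proof is correct and follows essentially the same approach as the paper: both enumerate $\La$, use a bijection $\NN\to\NN\times\NN$ to assign infinitely many indices to each $\la^{(m)}$, inductively build an orthonormal sequence biorthogonal with respect to each $T_j$ and with $\langle\mT x_k,x_k\rangle$ close to the target $\la^{(m_k)}$, and then subtract the diagonal (hence normal, trace-class) error. The only cosmetic difference is that you invoke Lemma~\ref{compres} directly to land in $M_k$, whereas the paper re-derives that step by projecting a witnessing vector onto $F_s^{\perp}$ and estimating the resulting perturbation; your shortcut is slightly cleaner but the two arguments are otherwise identical.
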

\begin{proof}
Let $\La=\{\la_1,\la_2,\dots\}$. Let $f:\NN\to\NN\times\NN$ be a bijection. For $s\in\NN$ write $f(s)= (f_1(s),f_2(s))$.
We construct inductively an orthonormal sequence $(e_s)_{s\ge 1}\subset H$ in the following way:
Choose a unit vector $e_1$ arbitrarily, fix $s\ge 2$ and suppose that the vectors $e_1,\dots,e_{s-1}\in H$ have already been constructed. Since $\la_{f_1(s)}\in W_e(\mT)$, there exists an orthonormal sequence $(x_k)_{k \ge 1}\subset H$ such that $\lim_{k\to\infty}\langle \mT x_k,x_k\rangle=\la_{f_1(s)}$.
Let $$F_{s}=\bigvee\{e_i,T_je_i,T_j^*e_i:1\le i\le s-1,1\le j\le n\}.$$
 Since $\dim F_s<\infty$, there exists $m\in\NN$ such that
$$
\|P_{F_s}x_m\|\le 2^{-s} \qquad \text{and} \qquad \|\langle \mT x_m,x_m\rangle-\la_{f_1(s)}\|\le 2^{-s}.
$$
Set $e_s=\frac{(I-P_{F_s})x_m}{\|(I-P_{F_s})x_m\|}$. Then $\|e_s\|=1$ and $e_s \perp F_s.$
We also have
\begin{align*}
\|x_m-e_s\|\le&
\bigl\|x_m-(I-P_{F_s})x_m\bigl\|+ \Bigl\|(I-P_{F_s})x_m-\frac{(I-P_{F_s})x_m}{\|(I-P_{F_s})x_m\|}\Bigr\|\\
\le&
\|P_{F_s}x_m\|+\Bigl(1-\frac{1}{1-2^{-s}}\Bigr)\\
\le& 3\cdot 2^{-s}.
\end{align*}

Moreover, if $\e_s=(\e_{s,1},\dots,\e_{s,n})\in\CC^n$ is given by $\e_s=\langle\mT e_s,e_s\rangle-\la_{f_1(s)}$
then
\begin{align*}
|\e_{s,j}|=&
|\langle T_je_s,e_s\rangle-\la_{f_1(s),j}|\\
\le&
|\langle T_j(e_s-x_m),e_s\rangle|+|\langle T_jx_m,e_s-x_m\rangle|+
|\langle T_j x_m,x_m\rangle-\la_{f_1(s),j}|\\
\le&
3\cdot 2^{-s}\|T_j\|+3\cdot 2^{-s}\|T_j\|+2^{-s}, \qquad 1 \le j \le n.
\end{align*}
Now, for every  $j=1,\dots,n$ define $K_j\in B(H)$ by
$$K_j=\sum_{s=1}^\infty \e_{s,j}e_s\otimes e_s.$$ By construction, $K_j$ is a trace-class normal operator and $\langle (T_j-K_j)e_s,e_s\rangle=\la_{f_1(s),j}$ for all $s\in\NN$.

For $k\in\NN$ let $L_k=\bigvee\{e_s: f_1(s)=k\}$. Clearly $\dim L_k=\infty$.
If $f_1(s)=k$ then $\langle T_je_s,e_s\rangle=\la_{k,j}$. Moreover, if $f_1(s)=k=f_1(s')$ and $s\ne s'$ then $\langle T_je_s,e_{s'}\rangle=0=\langle T_je_{s'},e_s\rangle$ for all $j=1,\dots,n$. It is easy to see that this means that
$$
P_{L_k}(\mT-\mK)P_{L_k}=\la_kP_{L_k}.
$$
Hence $\la_k\in W_\infty(\mT-\mK)$.
\end{proof}

Using Proposition \ref{propinf}, we can now express $W_e(\mathcal T)$ in terms of the infinite numerical ranges of
compact perturbations of $W_{\infty}(\mathcal T).$ The result below seems to be new even for single operators.
\begin{theorem}\label{infiniterange}
Let $\mT\in B(H)^n$. Then:
\begin{itemize}
\item [ (i)] $W_e(\mT)=\bigcup_{\mK\in\mK(H)^n} W_\infty(\mT-\mK)$;

\item [(ii)] there exists an $n$-tuple $\mK$ of compact operators such that
$W_e(\mT)=\overline{W_\infty(\mT-\mK)}$.
\end{itemize}
\end{theorem}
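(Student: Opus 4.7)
My plan is to derive both parts directly from Proposition \ref{propinf} together with two elementary facts: the compact-perturbation invariance of $W_e$ (namely $W_e(\mT-\mK)=W_e(\mT)$ for any $\mK\in\mK(H)^n$), and the inclusion $W_\infty(\mathcal S)\subset W_e(\mathcal S)$ recorded in \eqref{einf}, valid for every tuple $\mathcal S$. Combining these, for any tuple $\mK$ of compact operators
\[
W_\infty(\mT-\mK)\subset W_e(\mT-\mK)=W_e(\mT),
\]
which gives the ``$\supset$'' inclusion in (i) and the inclusion $\overline{W_\infty(\mT-\mK)}\subset W_e(\mT)$ needed for (ii), since $W_e(\mT)$ is closed.

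For the converse inclusion in (i), I would fix $\la\in W_e(\mT)$ and apply Proposition \ref{propinf} to the one-point (hence countable) set $\La=\{\la\}$. This produces an $n$-tuple $\mK$ of trace-class normal operators (in particular compact) with $\la\in W_\infty(\mT-\mK)$, so $\la\in\bigcup_{\mK\in\mK(H)^n}W_\infty(\mT-\mK)$. This establishes (i).

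For (ii), the essential step is to pick a \emph{single} countable dense subset $\La$ of $W_e(\mT)$; such a $\La$ exists because $H$ is separable, which forces $W_e(\mT)$ to be a separable subset of $\CC^n$ (in fact it is compact). Applying Proposition \ref{propinf} to this $\La$ yields one $n$-tuple $\mK$ of trace-class normal operators with $\La\subset W_\infty(\mT-\mK)$. Taking closures and using the inclusion noted above,
\[
W_e(\mT)=\overline{\La}\subset\overline{W_\infty(\mT-\mK)}\subset W_e(\mT),
\]
which yields the equality in (ii).

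The proof is essentially a bookkeeping exercise on top of Proposition \ref{propinf}; there is no serious obstacle, since the hard inductive construction of the perturbation $\mK$ handling countably many target points simultaneously has already been done there. The only small point to keep track of is that the selection of a countable dense $\La$ in (ii) must be made once and for all before invoking the proposition, so that a single $\mK$ works for the whole dense set rather than a different $\mK$ for each $\la$.
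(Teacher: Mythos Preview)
Your proposal is correct and follows essentially the same route as the paper: both use the inclusion $W_\infty(\mT-\mK)\subset W_e(\mT-\mK)=W_e(\mT)$ for one direction in (i) and (ii), and invoke Proposition \ref{propinf} (with a singleton for (i), with a countable dense subset of $W_e(\mT)$ for (ii)) for the other. One minor remark: separability of $H$ is not needed to find a countable dense $\La$, since $W_e(\mT)$ is a compact subset of $\CC^n$ and hence automatically separable.
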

\begin{proof}
To show  (i), observe that  $W_e(\mT)=W_e(\mT-\mK)$ and $W_\infty(\mT-K)\subset W_e(\mT)$ for any $n$-tuple of compact operators $\mK.$ Hence
we have the inclusion ``$\supset$''. The other inclusion is clear by Proposition \ref{propinf}.

To prove  (ii), it suffices to apply Proposition \ref{propinf} with $\Lambda$ being any dense countable set
in $W_e(T)$ and to use once again that $W_e(\mT)$ is invariant under compact perturbations.
Since
$$
W_e(\mT)=\overline {\Lambda} \subset \overline{W_{\infty}(\mT-\mK)} \subset W_e(\mT),
$$
the assertion follows.
\end{proof}
Theorem \ref{infiniterange} is a counterpart of \cite[Corollary 13]{MullerSt} where it was proved
that for any $\mT \in B(H)^n$ there exists an $n$-tuple of compact  operators $\mK$
such that $W_e(T)=\overline{W(\mT-\mK)}.$
Note  that the theorem provides a one more proof of convexity of $W_e(\mathcal T)$ once the convexity of $W_{\infty}(\mT)$ is established.

The notion of the infinite numerical range allows us to prove an inclusion result for numerical ranges which complements  Theorem \ref{numranges} and partially generalizes \eqref{we}. (Note however that its proof uses \eqref{we} essentially.)
Let us first remark that
if $V\subset\CC$ is a convex set, then $\Int (\overline{V}) \subset V$.
Indeed, let $\la\in \Int\overline{V}$. We show that $\la\in V$.
Without loss of generality we can assume that $\la=0$. Since $0\in\Int\overline{V}$, there exists $r>0$ such that $\{a\in\CC: |a|\le r\}\subset\overline{V}$. In particular, $a_0:=r$, $a_1:=r\eta$ and $a_2:=r\eta^2$ are elements of $\overline{V}$, where $\eta=e^{2\pi i/3}$.
Now if the elements $b_0, b_1,$ and $ b_2$ belong to $V$, and are sufficiently close to $a_0, a_1$ and $a_2$, respectively, then $0\in\conv\{b_0,b_1,b_2\}$. Since $V$ is convex, we have $0\in V$.

Thus by convexity of $W(T)$ for any $T \in B(H),$ we infer that
\begin{equation}\label{quasi}
\Int \overline{W(T)}\subset W(T).
\end{equation}

While for tuples $\mT \in B(H)^n$ the set $W(\mT)$ is in general not convex, the property \eqref{quasi} nevertheless holds also for $\mT$
if $\Int W_e(\mT)\ne\emptyset$.

\begin{theorem}
Let $\mT=(T_1,\dots,T_n)\in B(H)^n$, $\Int \, (W_e(\mT))\ne\emptyset$. Then
$\Int \overline{W(\mT)}\subset W(\mT)$.
\end{theorem}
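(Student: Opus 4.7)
Let $\la\in\Int\overline{W(\mT)}$. My plan is to write $\la$ as a proper convex combination $\la=\tfrac{1}{1+s}\nu+\tfrac{s}{1+s}\mu$ with $\nu\in W(\mT)$ and $\mu\in\Int W_e(\mT)$, and then glue unit vectors realizing $\nu$ and $\mu$ into a single unit vector realizing $\la$. The gluing is possible because $\Int W_e(\mT)\subset W_\infty(\mT)$ by \eqref{einf}, so Proposition \ref{infinite}(iii) gives enormous flexibility in positioning the vector that realizes $\mu$.

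If $\la\in\Int W_e(\mT)$, then $\la\in W(\mT)$ directly by \eqref{we}. Otherwise, fix $r>0$ with $B(\la,r)\subset\overline{W(\mT)}$. Since $\Int W_e(\mT)$ is bounded, for all sufficiently small $s>0$ the affine image
\begin{equation*}
V_s:=\{(1+s)\la-s\mu:\mu\in\Int W_e(\mT)\}
\end{equation*}
lies inside $B(\la,r)\subset\overline{W(\mT)}$. As the image of the nonempty open set $\Int W_e(\mT)$ under an affine homeomorphism, $V_s$ itself is nonempty and open. Any nonempty open subset of $\overline{W(\mT)}$ must meet $W(\mT)$, since $W(\mT)$ is dense in $\overline{W(\mT)}$. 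Hence there exist $\nu\in V_s\cap W(\mT)$ and $\mu\in\Int W_e(\mT)$ with $\la=\tfrac{1}{1+s}\nu+\tfrac{s}{1+s}\mu$.

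Now pick a unit vector $y\in H$ with $\langle\mT y,y\rangle=\nu$. Since $\mu\in\Int W_e(\mT)\subset W_\infty(\mT)$, Proposition \ref{infinite}(iii) applied to the finite-codimensional subspace $M:=\{y,T_jy,T_j^{*}y:1\le j\le n\}^\perp$ produces a unit vector $z\in M$ with $\langle\mT z,z\rangle=\mu$. Set $u:=\sqrt{\tfrac{1}{1+s}}\,y+\sqrt{\tfrac{s}{1+s}}\,z$: then $\|u\|=1$ because $z\perp y$, and for each $j$ the relations $\langle T_jy,z\rangle=\langle T_jz,y\rangle=0$ (consequences of $z\perp T_jy$ and $z\perp T_j^{*}y$) annihilate the cross terms, so that $\langle T_ju,u\rangle=\tfrac{1}{1+s}\nu_j+\tfrac{s}{1+s}\mu_j=\la_j$. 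Hence $\la=\langle\mT u,u\rangle\in W(\mT)$. The main conceptual hurdle is the splitting of $\la$: one must cut along a line through an interior point of $W_e(\mT)$ and land in the actual set $W(\mT)$, not merely in its closure, for which the density argument $V_s\cap W(\mT)\ne\emptyset$ is the critical ingredient.
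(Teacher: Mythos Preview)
Your proof is correct and follows essentially the same strategy as the paper's: write $\la$ as a proper convex combination of a point $\nu\in W(\mT)$ and a point $\mu\in\Int W_e(\mT)\subset W_\infty(\mT)$, then glue the realizing vectors using orthogonality. The only cosmetic differences are that the paper normalizes so that $0\in\Int W_e(\mT)$ and approximates $(1+s)\la$ by a point of $W(\mT)$ (whereas you run the density argument through the open set $V_s$), and the paper invokes Theorem~\ref{star} for the gluing step where you carry it out directly via Proposition~\ref{infinite}(iii)---but the underlying construction is identical.
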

\begin{proof}
Let $\Int (W_e(\mT) \ne\emptyset$. Without loss of generality we may assume that $(0,\dots,0)\in \Int W_e(\mT)$.
Let $r>0$ satisfy
$$
\{(z=(z_1,\dots,z_n):\|z\|\le r\}\subset \Int\, (W_e(\mT)).
$$
Let $\la\in\Int\overline{W(\mT)}$, so there exists $s>0$ such that $(1+s)\la\in\Int \, (\overline{W(\mT)})$.
Let $0<\de<sr$. Then there is $\mu\in W(\mT)$ such that $\|\mu-(1+s)\la\|<\de$.
Set $\eta=\la+\frac{\la-\mu}{s}$.
Then
$$
\|\eta\|\le \left \|\la+\frac{\la-(1+s)\la}{s}\right \|+\frac{\de}{s}<r.
$$
So, by \eqref{einf}, $\eta\in\Int \, (W_e(\mT))\subset W_\infty (\mT)$.
Furthermore,
$$
\frac{1}{1+s} \mu+\frac{s}{1+s} \eta=
\frac{1}{1+s}\mu+\frac{s}{1+s}\la+\frac{\la-\mu}{1+s}=\la.
$$
By Theorem \ref{star}, $\la\in W(\mT)$.
\end{proof}

Since the interior of the essential numerical range played an important role above, it is natural
to realize  when it is non-empty. The following simple proposition clarifies the situation
in algebraic terms.
Let $\mathcal S$ stand for the real linear subspace of $B(H)$ formed by the sums of  selfadjoint compact operators on $H$ and real scalar multiples of the identity.

\begin{proposition}
Let $\mT=(T_1,\dots,T_n)\in B(H)^n$. The following statements are equivalent:
\begin{itemize}
\item [(i)] $\Int \, W_e(\mT)\ne\emptyset$;

\item [ii)] $\Int \, W_\infty(\mT)\ne\emptyset$;

\item [(iii)] the operators $\Re \, T_1,\Im \, T_1,\dots,\Re \, T_n,\Im \, T_n$ are linearly independent in the real vector space of all selfadjoint operators modulo $\mathcal S$.
\end{itemize}
More precisely, if $c, t_1,\dots,t_{2n}$ are real numbers such that $\sum_{j=1}^n(t_{2j-1}\Re \, T_j+ t_{2j}\Im \, T_j)-cI$ is compact, then $t_1=\cdots=t_{2n}=0.$
\end{proposition}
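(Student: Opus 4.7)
The plan is to treat (i)$\Leftrightarrow$(ii) as an almost immediate consequence of the sandwich \eqref{einf}, and to prove (i)$\Leftrightarrow$(iii) via a hyperplane separation argument on the convex set $W_e(\mT)\subset\RR^{2n}$, combined with the standard description of the essential numerical range of a single selfadjoint operator. The ``more precisely'' clause will drop out of the argument.

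For (i)$\Leftrightarrow$(ii), by \eqref{einf} one has $\Int W_e(\mT)\subset W_\infty(\mT)\subset W_e(\mT)$. Since the leftmost set is already open, the first inclusion yields $\Int W_e(\mT)\subset\Int W_\infty(\mT)$, while the second inclusion yields the reverse by monotonicity of the interior. Thus $\Int W_e(\mT)=\Int W_\infty(\mT)$, so one is empty iff the other is.

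For (i)$\Leftrightarrow$(iii), I would identify $\CC^n$ with $\RR^{2n}$ via $\mu_j\leftrightarrow(\Re\mu_j,\Im\mu_j)$ and view $W_e(\mT)$ as the essential numerical range of the $2n$-tuple of selfadjoint operators $(\Re T_1,\Im T_1,\dots,\Re T_n,\Im T_n)$, which is a nonempty compact \emph{convex} subset of $\RR^{2n}$. A convex set in $\RR^{2n}$ has empty interior precisely when it lies in some affine hyperplane, i.e., when there exist reals $t_1,\dots,t_{2n}$, not all zero, and $c\in\RR$, such that every point $\mu\in W_e(\mT)$ satisfies $\sum_{j=1}^n(t_{2j-1}\mu_{2j-1}+t_{2j}\mu_{2j})=c$. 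Setting $A:=\sum_{j=1}^n(t_{2j-1}\Re T_j+t_{2j}\Im T_j)$, this hyperplane condition is precisely the assertion $W_e(A)\subset\{c\}$, hence $W_e(A)=\{c\}$.

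To close the loop, I would invoke the fact that for a selfadjoint operator $A\in B(H)$ one has $W_e(A)=[\min\sigma_e(A),\max\sigma_e(A)]$, which follows from \eqref{calkin1} together with selfadjointness of $\pi(A)$ in the Calkin algebra. Thus $W_e(A)=\{c\}$ is equivalent to $\sigma_e(A)=\{c\}$, which, since $\pi(A-cI)$ is a selfadjoint Calkin element with singleton spectrum $\{0\}$, is equivalent to $\pi(A-cI)=0$, i.e., to $A-cI\in\mathcal K(H)$. Combining, $\Int W_e(\mT)=\emptyset$ iff there exist nontrivial real $t_1,\dots,t_{2n}$ and some $c\in\RR$ with $A-cI$ compact, which is exactly the negation of (iii) and simultaneously yields the ``more precisely'' clause. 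The only step beyond pure convex geometry is the identification $W_e(A)=\{c\}\Leftrightarrow A-cI\in\mathcal K(H)$ for selfadjoint $A$; this is the main (and rather mild) obstacle, reducing to the standard $C^*$-algebra fact that a selfadjoint element with singleton spectrum is a scalar.
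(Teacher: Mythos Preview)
Your proposal is correct and follows essentially the same route as the paper: the equivalence (i)$\Leftrightarrow$(ii) via \eqref{einf}, and (i)$\Leftrightarrow$(iii) via the hyperplane characterization of empty interior for the convex set $W_e(\mT)\subset\RR^{2n}$, reducing to $W_e(A-cI)=\{0\}$ for a selfadjoint $A$. The only difference is that you spell out the implication $W_e(A)=\{c\}\Leftrightarrow A-cI\in\mathcal K(H)$ via $\sigma_e$ and the $C^*$-fact that a selfadjoint Calkin element with singleton spectrum is scalar, whereas the paper simply asserts this step.
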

\begin{proof}
The equivalence (i)$\Leftrightarrow$(ii) was proved above. Thus, it is enough to prove the equivalence (i)$\Leftrightarrow$ (iii).
 Recall that $W_e(\mT)$ is a nonempty convex set. We consider $W_e(\mT)$ to be a subset of $\RR^{2n}$. So $\Int W_e(\mT)=\emptyset$ if and only if $W_e(\mT)$ is contained in a proper hyperplane in $\RR^{2n}$.
This is equivalent to the existence of a non-trivial $(2n)$-tuple $(t_1,\dots,t_{2n})\in\RR^{2n}$ and $c\in\RR$ such that
$$
\sum_{j=1}^{2n}t_jz_j-c=0
$$ for all $(z_1,\dots,z_{2n})\in W_e(\mT)\subset\RR^{2n}.$
This means that
$$
W_e\Bigl(\sum_{j=1}^n (t_{2j-1}\Re \, T_j+t_{2j}\Im \, T_j)-cI\Bigr)\subset\{0\},
$$
i.e.,
$$\sum_{j=1}^n (t_{2j-1}\Re \, T_j+t_{2j}\Im \, T_j) -cI$$ is a compact operator.
\end{proof}

\section{Asymptotics  of compressions for operator iterates}\label{restrict}

In this section the numerical ranges ideology will be used to study asymptotical properties of powers of bounded operators.
We will  show that if the powers of $T \in B(H)$  vanish in the \emph{weak} operator topology and the spectrum of $T$ is large enough, then for any strict contraction $C$ it is possible to find a subspace $L\subset H$ such that the compressions of $(T^n)_L$ to $L$ match asymptotically the powers of $C$ in the \emph{uniform} operator topology. Moreover,  if the  assumption $T^n\to 0$ in the weak operator topology  is dropped
then for each $k\in\NN$ we are able to construct a subspace $L\subset H$ such that $(T^n)_L=C_u^n, 1 \le n \le k$, where $C_u\in B(L)$ is a  contraction unitarily equivalent to $C$.

In the rest of this section we fix a separable infinite-dimensional Hilbert space $H$.
We first recall several additional notions and notations for operator tuples needed for the sequel.
 Let $\mA_j\in B(H)^n, 1 \le j \le r,$ so that $\mA_j=(A_{j1},\dots,A_{jn})$ for every $j.$
 The direct sum  $\bigoplus_{j=1}^r\mA_j$ is then defined as the $n$-tuple
$$\Bigl(\bigoplus_{j=1}^r A_{j1},\dots,\bigoplus_{j=1}^r A_{jn}\bigr) \in B\Bigl(\bigoplus_{j=1}^r H\Bigr)^n.$$
Note that if $M$ is a subspace of a Hilbert space $H,$ and  $T_M:M\to M$ is the compression $P_M T P_M$ of $T \in B(H)$ to $M$,
then $T_M=J_M^*TJ_M,$ where the natural embedding $J_M:M\to H$ is defined by $Jx=x, x \in M$.
So, if  $M\subset \bigoplus_{j=1}^r H$ and $\mA_j\in B(H)^n, 1 \le j \le r,$, then we define the compression $\Bigl(\bigoplus_{j=1}^r\mA_j\Bigr)_M$ as the $n$-tuple
$$
J_M^*\Bigl(\bigoplus_{j=1}^r\mA_j\Bigr)J_M=
\Bigl(J_M^*\Bigl(\bigoplus_{j=1}^rA_{j1}\Bigr)J_M,\dots,
J_M^*\Bigl(\bigoplus_{j=1}^rA_{jn}\Bigr)J_M\Bigr)\in B(M)^n.
$$

The next statement, of interest in itself, is an extension of \cite[Proposition 1.1]{PokrzywaJOT} from the case of a single operator to the case
of operator tuples. It will allow us to identify a convex combination of operator tuples with a compression of their direct sum.
\begin{lemma}\label{pokrzywa}
Let $n,r\in\NN$, and let $\mA_j\in B(H)^n,  \quad j=1,\dots,r$. Let $\al_1,\dots,\al_r\ge 0$, $\sum_{j=1}^r\al_j=1$. Then there exists
a subspace $M\subset \bigoplus _{j=1}^r H$ such that
$$ \Bigl(\bigoplus_{j=1}^r \mA_j\Bigr)_M\ \usim\  \sum_{j=1}^r\al_j\mA_j.$$
\end{lemma}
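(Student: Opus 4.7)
The plan is to exhibit an explicit isometry that realizes the convex combination as a compression of the direct sum. Set
$H_{\oplus}:=\bigoplus_{j=1}^r H$ and define the map $V:H\to H_{\oplus}$ by
$$
Vx=\bigl(\sqrt{\al_1}\,x,\sqrt{\al_2}\,x,\dots,\sqrt{\al_r}\,x\bigr),\qquad x\in H.
$$
A direct computation gives $\|Vx\|^2=\sum_{j=1}^r\al_j\|x\|^2=\|x\|^2$, so $V$ is an isometry. Take
$M:=V(H)$; then $M$ is a closed subspace of $H_{\oplus}$ and, viewed as a map $H\to M$, $V$ is unitary.

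Next I would verify, by a one-line calculation, that for each coordinate $i=1,\dots,n$,
$$
V^{*}\Bigl(\bigoplus_{j=1}^r A_{ji}\Bigr)V=\sum_{j=1}^r\al_j A_{ji}.
$$
Indeed, for $y=(y_1,\dots,y_r)\in H_{\oplus}$ one has $V^*y=\sum_{j=1}^r\sqrt{\al_j}\,y_j$, so
$$
V^{*}\Bigl(\bigoplus_{j=1}^r A_{ji}\Bigr)Vx=\sum_{j=1}^r\sqrt{\al_j}\cdot\sqrt{\al_j}\,A_{ji}x=\sum_{j=1}^r\al_j A_{ji}x.
$$

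Finally, I would pass from this intertwining identity to the compression statement. The natural embedding $J_M:M\to H_{\oplus}$ satisfies $J_MV=V$ (as maps into $H_{\oplus}$), whence $V^{*}J_M^{*}=V^{*}$ on $H_{\oplus}$. Thus, with $U:=V$ regarded as a unitary $H\to M$,
$$
U^{-1}\Bigl(\bigoplus_{j=1}^r A_{ji}\Bigr)_M U
=V^{*}J_M^{*}\Bigl(\bigoplus_{j=1}^r A_{ji}\Bigr)J_MV
=V^{*}\Bigl(\bigoplus_{j=1}^r A_{ji}\Bigr)V
=\sum_{j=1}^r\al_j A_{ji}
$$
for every $i=1,\dots,n$. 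Applying this coordinatewise yields
$\bigl(\bigoplus_{j=1}^r\mA_j\bigr)_M\usim\sum_{j=1}^r\al_j\mA_j$ via the single unitary $U$, as required.

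There is no real obstacle here; the only conceptual point is guessing the right isometry, and the choice of the weights $\sqrt{\al_j}$ (rather than, say, $\al_j$) is forced by the requirement that $V$ be an isometry while $V^{*}(\bigoplus_j A_{ji})V$ produce the weighted average with weights $\al_j$. The argument makes no use of commutation between the operators $A_{ji}$, so it applies verbatim to arbitrary (non-commuting) tuples.
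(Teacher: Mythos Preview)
Your proof is correct. The isometry $V$ you introduce is exactly the right object, the computation of $V^*\bigl(\bigoplus_j A_{ji}\bigr)V$ is clean, and the passage from this identity to the compression via $J_M V=V$ is valid.

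Your route differs from the paper's in a useful way. The paper argues by induction on $r$: for $r=2$ it writes down the $2\times 2$ unitary
\[
\UU=\begin{pmatrix}\sqrt{\al_1}&\sqrt{\al_2}\\ \sqrt{\al_2}&-\sqrt{\al_1}\end{pmatrix}
\]
on $H\oplus H$, conjugates $\mA_1\oplus\mA_2$ by $\UU$ to see $\al_1\mA_1+\al_2\mA_2$ in the upper-left corner, and sets $M=\UU(H\oplus\{0\})$; the general case is then reduced to repeated applications of the $r=2$ step. Your construction short-circuits the induction by observing that only the first column of such a unitary matters, and that the obvious $r$-column isometry $x\mapsto(\sqrt{\al_1}x,\dots,\sqrt{\al_r}x)$ already does the job in one stroke. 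This is genuinely simpler and yields the subspace $M$ explicitly for all $r$ at once; the paper's inductive approach, on the other hand, makes the connection with Pokrzywa's single-operator argument more visible and constructs $M$ as a nested chain of subspaces.
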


\begin{proof}
For $r=1$ the statement is trivial.
We prove the statement first for $r=2$.

Consider the operator $\UU:H\oplus H\to H\oplus H$ defined by
$$
\UU=\begin{pmatrix}
\sqrt{\al_1}&\sqrt{\al_2}\cr
\sqrt{\al_2}&-\sqrt{\al_1}\end{pmatrix}.
$$
It is easy to verify that $\UU^*=\UU=\UU^{-1}$. Moreover,
$$
\UU\begin{pmatrix}
\mA_1&0\cr
0&\mA_2
\end{pmatrix}\UU=
\begin{pmatrix}
\al_1\mA_1+\al_2\mA_2 &*\cr
*&*
\end{pmatrix}.
$$
Let $M=\UU(H\oplus\{0\})$. Let $J_M:M\to H\oplus H$ be the natural embedding. Then
$$
J_MJ_M^*=P_M=\UU\begin{pmatrix}1&0\cr
0&0
\end{pmatrix}
\UU,
$$
where $P_M$ is the orthogonal projection onto $M.$
Let $J_1:H\to H\oplus H$ be defined by $J_1h=h\oplus 0, \quad h\in H$. Then $J_M^*\UU J_1:H\to M$ is a unitary operator.

We have
\begin{align*}
\begin{pmatrix}
\mA_1&0\\
0&\mA_2
\end{pmatrix}_M&=
J_M^*\begin{pmatrix}
\mA_1&0\cr
0&\mA_2
\end{pmatrix}
J_M\\
&\usim\
(J_1^*\UU J_M)J_M^*
\begin{pmatrix}
\mA_1&0\cr
0&\mA_2\end{pmatrix}
J_M(J_M^*\UU J_1)\\
&=
J_1^*\UU P_M \begin{pmatrix}
\mA_1&0\cr
0&\mA_2\end{pmatrix}
P_M\UU J_1\\
&=
J_1^*
\begin{pmatrix}
1&0\cr
0&0\end{pmatrix}
\UU
\begin{pmatrix}
\mA_1&0\cr
0&\mA_2\end{pmatrix}
\UU
\begin{pmatrix}
1&0\cr
0&0\end{pmatrix}
J_1\\
&=J_1^*
\begin{pmatrix}
1&0\cr
0&0\end{pmatrix}
\begin{pmatrix}
\al_1\mA_1+\al_2 \mA_2&*\cr
*&*\end{pmatrix}
\begin{pmatrix}
1&0\cr
0&0\end{pmatrix}
J_1\\
&=
J_1^*
\begin{pmatrix}
\al_1\mA_1+\al_2 \mA_2&0\cr
0&0\end{pmatrix}
J_1\\
&=\al_1\mA_1+\al_2 \mA_2.
\end{align*}

For $r>2 $ the statement can be proved by induction.
Let $r\ge 3$ be fixed and suppose the statement is true for $r-1$. We may assume that $\sum_{j=1}^{r-1}\al_j\ne 0$.
By the induction hypothesis, there exists a subspace $L\subset\bigoplus_{j=1}^{r-1}H$ such that
$$
J_L^*\Bigl(\bigoplus_{j=1}^{r-1}\mA_j\Bigr)J_L
\ \usim\
\Bigl(\sum_{j=1}^{r-1}\al_j\Bigr)^{-1}\sum_{j=1}^{r-1}\al_j\mA_j.
$$
Consider the Hilbert space
$L\oplus H$. By the statement for $r=2,$ there exists a subspace $M\subset L\oplus H\subset\bigoplus_{j=1}^r H$ such that
$$
J_M^*\Bigl(\bigoplus_{j=1}^r\mA_j\Bigr)J_M
\ \usim\
\Bigl(\sum_{j=1}^{r-1}\al_j\Bigr)\cdot\frac{\sum_{j=1}^{r-1}\al_j\mA_j}
{\sum_{j=1}^{r-1}\al_j}
+\al_r\mA_r
=\sum_{j=1}^r\al_j\mA_j,
$$
and the statement is thus true for $r.$ This completes the proof.
\end{proof}

\begin{remark}
Let $\mA_j\in B(H)^n$ and $\al_1,\dots,\al_r$ be as above. Let $\widetilde\mA_j\in B(H)^n$ be $n$-tuples unitarily equivalent to $\mA_j$, i.e., $\widetilde\mA_j=U_j^{-1}\mA_j U_j$ for some unitary operators $U_j\in B(H), j=1,\dots,r$.

Since $\bigoplus_{j=1}^r\mA_j\ \usim\ \bigoplus_{j=1}^r\widetilde\mA_j$, the previous lemma implies also that there exists a subspace $\widetilde L\subset H$ such that
$$\Bigl(\bigoplus_{j=1}^r\mA_j\Bigr)_{\widetilde L}\ \usim\ \sum_{j=1}^r\al_j\widetilde\mA_j.$$
\end{remark}

Let $S\subset\CC^n$. Denote by $\mM(S)$ the set of all $n$-tuples of operators $\mA=(A_1,\dots,A_n)\in B(H)^n$  such that there exist an orthonormal basis $(x_i)_{i \ge 1}$ in $H$ and elements $\la_i\in S, i \ge 1,$ satisfying  $\mA x_i=\la_i x_i, i \ge 1$.

Using Proposition \ref{pokrzywa} we will further identify a compression of a tuple $\mT$ with a tuple of diagonal operators $\mA$
whose diagonals belong to the infinite numerical range of $\mT.$
\begin{proposition}\label{propm}
Let $\mT=(T_1,\dots,T_n)\in B(H)^n$. Let $\mA\in\conv\mM(W_\infty(\mT))$. Then there exists a subspace $L\subset H$ such that the compression $\mT_L$ is unitarily equivalent to $\mA$.
\end{proposition}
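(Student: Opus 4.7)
The plan is to write $\mA = \sum_{j=1}^r \alpha_j \mA_j$ with $\mA_j \in \mM(W_\infty(\mT))$ and $\alpha_j \ge 0$, $\sum_j \alpha_j = 1$, realize each $\mA_j$ (up to unitary equivalence) as a compression of $\mT$ to a subspace $M_j \subset H$, arrange the $M_j$ to be mutually orthogonal and ``$\mT$-decoupled,'' and then apply Lemma \ref{pokrzywa} (via its remark) inside $M_1 \oplus \cdots \oplus M_r$ to extract a further subspace whose compression is $\mA$.

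For each $j$, fix an orthonormal basis $(x_i^{(j)})_{i \ge 1}$ of $H$ with $\mA_j x_i^{(j)} = \lambda_i^{(j)} x_i^{(j)}$ for some $\lambda_i^{(j)} \in W_\infty(\mT)$. Enumerate the pairs $\{(i,j) : i \ge 1,\ 1 \le j \le r\}$ and construct a single orthonormal system $\{y_i^{(j)}\} \subset H$ inductively as follows. At each step, having selected finitely many $y_{i'}^{(j')}$, the set
$$F := \bigvee\bigl\{y_{i'}^{(j')},\ T_k y_{i'}^{(j')},\ T_k^* y_{i'}^{(j')}\bigr\}$$
is finite-dimensional; since $\lambda_i^{(j)} \in W_\infty(\mT)$, Proposition \ref{infinite}(iii) produces a unit vector $y_i^{(j)} \in F^\perp$ with $\langle \mT y_i^{(j)}, y_i^{(j)} \rangle = \lambda_i^{(j)}$. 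By construction $\langle T_k y_i^{(j)}, y_{i'}^{(j')}\rangle = 0$ whenever $(i,j) \ne (i',j')$ and $\langle T_k y_i^{(j)}, y_i^{(j)}\rangle$ equals the $k$-th coordinate of $\lambda_i^{(j)}$. Setting $M_j := \bigvee_i y_i^{(j)}$ and defining the isometry $U_j : H \to M_j$ by $U_j x_i^{(j)} = y_i^{(j)}$, one reads off $U_j^* (T_k)_{M_j} U_j = A_{j,k}$, so $\mT_{M_j} \usim \mA_j$. Moreover, for $j \ne j'$ the decoupling relation $\langle T_k y_i^{(j)}, y_{i'}^{(j')}\rangle = 0$ implies $P_{M_{j'}} T_k P_{M_j} = 0$.

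Consequently, on the orthogonal sum $N := M_1 \oplus \cdots \oplus M_r \subset H$, the compression $\mT_N$ is block-diagonal, and the unitary $U := U_1 \oplus \cdots \oplus U_r : \bigoplus_{j=1}^r H \to N$ satisfies $U^* \mT_N U = \bigoplus_{j=1}^r \mA_j$. By Lemma \ref{pokrzywa} (and the remark following it), there is a subspace $\tilde L \subset \bigoplus_j H$ with
$$\Bigl(\bigoplus_{j=1}^r \mA_j\Bigr)_{\tilde L} \usim \sum_{j=1}^r \alpha_j \mA_j = \mA.$$
Setting $L := U(\tilde L) \subset N \subset H$ and using the identity $\mT_L = (\mT_N)_L$ for nested subspaces yields $\mT_L \usim \mA$, as required.

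The main obstacle is the joint orthogonality/decoupling construction in the second paragraph: the system $\{y_i^{(j)}\}$ has to be simultaneously orthonormal \emph{and} satisfy $\langle T_k y_i^{(j)}, y_{i'}^{(j')}\rangle = 0$ for all $k$ and all distinct index pairs, while the diagonal entries are prescribed. The point that makes everything run is that the imposed constraints at each step form a finite-codimensional subspace, so the characterization of $W_\infty(\mT)$ by Proposition \ref{infinite}(iii) delivers the required vector with the exact diagonal value; this is precisely where the hypothesis $\lambda_i^{(j)} \in W_\infty(\mT)$ (and not merely in the closure) is used.
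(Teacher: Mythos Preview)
Your proof is correct and follows essentially the same route as the paper: decompose $\mA=\sum_j\alpha_j\mA_j$, use the finite-codimension characterization of $W_\infty(\mT)$ (Proposition~\ref{infinite}(iii)) to build an orthonormal system $\{y_i^{(j)}\}$ that is $\mT$-decoupled across distinct index pairs, obtain $\mT_{M_j}\usim\mA_j$ and block-diagonality on $N=\bigoplus_j M_j$, and then invoke Lemma~\ref{pokrzywa} together with the nested-compression identity $\mT_L=(\mT_N)_L$. If anything, your write-up is slightly more explicit than the paper's about the transport via the unitary $U$ and the nested compression step.
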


\begin{proof}
 By assumption there exists $r \in \mathbb N$ such that
 $$\mA=\sum_{j=1}^r\al_j\mA_j$$ for some $\mA_j\in\mM(W_\infty(\mT)),1 \le j \le r,$ and $(\al_j)_{1 \le j \le k}$ are nonnegative numbers satisfying $\sum_{j=1}^r\al_j=1$. So for every $j$, $1 \le j \le r,$ we have $\mA_j=\diag(\la_{j,1},\la_{j,2},\dots)$ with $(\la_{j,i})_{ i \ge 1}\in W_\infty(\mT)$.

Consider the set $\{1,\dots,r\}\times\NN$ with the lexicographic order:
$(j,i)\prec (j',i')$ if either $i<i'$ or $i=i'$ and $j<j'$.
We construct inductively an orthonormal sequence $(x_{j,i})\subset H, i, j \in \mathbb N,$ in the following way:
Fix a unit vector $x_{1,1} \in H,$ let $j\in\{1,\dots,r\}, i\in\NN$ and suppose that we have already constructed vectors $x_{j',i'}$ for all $(j',i')\prec (j,i)$.
Since $\la_{j,i}\in W_\infty(\mT)$, we can find a unit vector $x_{j,i}\in H$ such that
\begin{align*}
x_{j,i}\perp x_{j',i'},& \qquad (j',i')\prec(j,i),\\
x_{j,i}\perp T_sx_{j',i'},& \qquad (j',i')\prec(j,i),\,\, s=1,\dots,k,\\
x_{j,i}\perp T^*_sx_{j',i'},& \qquad (j',i')\prec(j,i), \,\, s=1,\dots,k,
\end{align*}
and
$$
\langle\mT x_{j,i},x_{j,i}\rangle=\la_{j,i}.
$$
Suppose we have constructed the vectors $x_{j,i}$ in this way. For $j=1,\dots,r$ let $H_j=\bigvee_{i\in\NN} x_{j,i}$. Let $\widetilde H=\bigoplus_{j=1}^r H_j$.
For $j=1,\dots,r$ we have
$$
J_{H_j}^*\mT J_{H_j}\ \usim\ \mA_j
$$
and
$$
J_{\widetilde H}^*\mT J_{\widetilde H}\ \usim\
\bigoplus_{j=1}^r \mA_j.
$$
By Lemma \ref{pokrzywa}, there exists a subspace $L\subset \widetilde H\subset H$ such that
$$J_L^*\mT J_L\ \usim\ \sum_{j=1}^r\al_j\mA_j.$$
\end{proof}

Now we are able to prove a partial generalization of \cite[Theorem 2.1]{Bourin03}
by putting the result into the setting of operator tuples.

\begin{theorem}\label{bourin}
Let $T\in B(H)$ be such that the polynomial hull $\hat\sigma(T)$ contains $\DD$. Let $n\in\NN$ and $\mT=(T,T^2,\dots,T^n)$. Let $A\in B(H)$, $\|A\|<1$ and let $\mA=(A,A^2,\dots,A^n)$. Then there exists a subspace $L\subset H$ such that
$$
(\mT)_L\ \usim\ \mA.
$$
\end{theorem}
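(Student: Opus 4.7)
The proof will apply Proposition \ref{propm} to the tuple $\mA$. Since $\hat\sigma(T)\supset\DD$, Theorem \ref{lambdawe} yields $(\la,\la^2,\dots,\la^n)\in W_\infty(\mT)$ for every $\la\in\DD$, so the set $\Gamma:=\{(\la,\la^2,\dots,\la^n):\la\in\DD\}$ is contained in $W_\infty(\mT)$ and hence $\mM(\Gamma)\subset\mM(W_\infty(\mT))$. It will therefore suffice to produce a tuple in $\mM(\Gamma)$ of which $\mA$ is the compression to a distinguished subspace.

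To construct such a tuple, pick $r$ with $\|A\|<r<1$ and set $B:=A/r$, a strict contraction. I would invoke a finite-order version of the Sz.-Nagy unitary dilation: $B$ admits a unitary power dilation $V$ on a separable Hilbert space $K\supset H$ whose spectrum is purely atomic in $\TT$ and which satisfies $B^k=P_HV^k|_H$ for $k=0,1,\dots,n$. Setting $N:=rV$, the operator $N$ is normal on $K$ with pure point spectrum in $r\TT\subset\DD$, and $A^k=P_HN^k|_H$ for $k=0,1,\dots,n$. After identifying $K$ with $H$ by a unitary isomorphism, the tuple $\mathcal N:=(N,N^2,\dots,N^n)$ becomes an element of $\mM(\Gamma)\subset\conv\mM(W_\infty(\mT))$.

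Proposition \ref{propm}, applied to $\mathcal N$, now yields a subspace $M\subset H$ and a unitary $W:M\to K$ with $W(T^k)_MW^{-1}=N^k$ for $k=1,\dots,n$. Set $L:=W^{-1}(H)\subset M$; since $L\subset M$, one has $(T^k)_L=((T^k)_M)_L$, and this sub-compression is conjugated by $W|_L$ to $P_HN^k|_H=A^k$. Consequently $(\mT)_L\usim\mA$, as required.

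The crux of the argument is producing the unitary dilation $V$ of $B$ with pure point spectrum matching the first $n$ moments of $B$. The minimal Sz.-Nagy dilation typically has absolutely continuous spectrum and therefore fails this requirement, but only finitely many moments must be matched here. The strict inequality $\|B\|<1$ provides the strict block-Toeplitz positivity underlying the operator-valued truncated trigonometric moment problem (of Carath\'eodory--Toeplitz / Naimark type), and this permits the extending moment sequence to come from an atomic spectral measure on $\TT$. This dilation-theoretic step is essentially classical and independent both of the operator $T$ and of the numerical-range machinery developed earlier in the paper.
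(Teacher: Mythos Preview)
Your overall strategy---produce a diagonal tuple $\mathcal N\in\mM(W_\infty(\mT))$ that power-dilates $A$ up to order $n$, apply Proposition~\ref{propm} to $\mathcal N$, and then pass to the sub-compression $L=W^{-1}(H)$---is sound, and the compression bookkeeping in your last paragraph is correct. The problem is the step you yourself flag as the crux: the existence of a unitary $V$ with \emph{pure point spectrum} satisfying $P_HV^k|_H=B^k$ for $k=1,\dots,n$. This is not a classical fact one can simply cite. The Sz.-Nagy and Egerv\'ary dilations match the required powers but have no reason to be diagonalizable; conversely, Weyl--von Neumann--Berg lets one approximate a given unitary dilation in norm by a diagonalizable unitary, but then the compressions are only close to $B^k$, not equal. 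The operator-valued truncated trigonometric moment problem does have solutions (Naimark), but the assertion that strict block-Toeplitz positivity forces a \emph{countably atomic} solution is not standard in infinite dimensions, and the simple ``place atoms at $N$-th roots of unity and Fourier-invert'' construction fails to give positive weights once $\|B\|$ is close to $1$. So as written the proof has a genuine gap at exactly the point you identify.

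The paper's argument avoids this difficulty by exploiting the \emph{convex hull} in Proposition~\ref{propm} rather than trying to land in $\mM(\Gamma)$ itself. First it uses the ordinary Sz.-Nagy dilation to reduce to $A=cU$ with $U$ unitary (no atomicity required, since all powers are matched). Then Weyl--von Neumann is applied to the normal operator $U$, giving $A=D+K_1$ with $D$ diagonal on $c\TT$ and $K_1$ compact and as small as one likes; the errors $K_j=A^j-D^j$ are split into real and imaginary parts, each a small diagonal operator. Finally $\mA$ is written as a genuine convex combination
\[
\mA=\tfrac{c}{c'}\cdot\tfrac{c'}{c}(D,\dots,D^n)+\eta\cdot\eta^{-1}(\Re K_1,0,\dots,0)+\cdots+\eta\cdot\eta^{-1}(0,\dots,0,i\,\Im K_n),
\]
where each summand lies in $\mM(W_\infty(\mT))$---the first because $\tfrac{c'}{c}(\la,\dots,\la^n)$ stays in the \emph{open} set $W_\infty(\mT)$ for $|\la|=c$ and $c'$ close to $c$, and the remaining $2n$ because small tuples $(\e_1,\dots,\e_n)$ lie in $W_\infty(\mT)$. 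The point is that these diagonal summands are \emph{not} of the form $(\la,\la^2,\dots,\la^n)$; the paper uses a full neighbourhood of $\Gamma$ inside $W_\infty(\mT)$, which is exactly what lets it absorb the Weyl--von Neumann error without ever needing an exact atomic dilation.
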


\begin{proof}
Let $\|A\|=c<1$. Since $A$ has the (power) dilation $cU$ where $U$ is a unitary operator, we can assume that $A=cU$.

By Theorem \ref{lambdawe} and \eqref{einf}, we have $(\la,\la^2,\dots,\la^n)\in  W_\infty(\mT)$ for all $\la\in\DD$.
Since the function
$$
\la\mapsto \dist\bigl\{(\la,\dots,\la^n),\CC^n\setminus W_\infty(\mT)\bigr\}
$$ is continuous, we can
find $c'$  such that $c<c'<1$ and $\frac{c'}{c}(\la,\la^2,\dots,\la^n)\in W_{\infty}(\mT)$ for all $\la$ with $|\la|=c$.
Set $$\eta=\Bigl(1-\frac{c}{c'}\Bigr)(2k)^{-1}.$$
 Let $\de$ be such that  $(\e_1,\dots,\e_n)\in W_e(\mT)$ for all $\e_1,\dots,\e_n\in\CC$ with $\max_j|\e_j|<\de$.
By the Weyl-von Neumann diagonalization theorem (see e.g. \cite[Chapter 6.37-38]{Conway}),
we can decompose $A$ as $A=D+K_1$, where $D$ is a diagonal operator with entries of modulus $c$ and $K_1$ is a compact operator satisfying $\|K_1\|<\de\eta k^{-1}$.
Set $K_j:=A^j-D^j, 1 \le j \le n$. We have
$$
K_j=A^j-D^j=\sum_{i=0}^{j-1}A^i(A-D)D^{j-i-1}.
$$
So for every $j$ the operator $K_j$ is compact and $$\|K_j\|\le n c^{j-1}\|A-D\|\le n\|K_1\|\le\de\eta.$$
Write $K_j=\Re K_j+ i\,\Im K_j,1 \le j \le n$.
The operators $\Re K_j$ and $i\,\Im K_j$ are diagonal operators with entries of modulus at most $\de\eta$.
We have
\begin{align*}
(A,A^2,\dots,A^n)&=
\frac{c}{c'}\cdot\frac{c'}{c}(D,D^2,\dots,D^n)+
\eta\cdot\eta^{-1}(\Re K_1,0,\dots,0)\\
&+\eta\cdot\eta^{-1}(i\,\Im K_1,0,\dots,0)+
\eta\cdot\eta^{-1}(0,\Re K_2,0,\dots,0)+\cdots\\
&\cdots+
\eta\cdot\eta^{-1}(0,\dots,0,i\,\Im  K_n),
\end{align*}
where
$\frac{c}{c'}+2k\eta=1$ and the $n$-tuples
$$
\frac{c'}{c}(D,D^2,\dots,D^n),\,\,
\eta^{-1}(\Re K_1,0,\dots,0),\dots,\,\,
\eta^{-1}(0,\dots,0,i\,\Im K_n)
$$
belong to $\mM(W_\infty(\mT))$. By Proposition \ref{propm}, there exists a subspace $L\subset H$ such that
$$
(\mT)_L\ \usim \ \mA.
$$
\end{proof}

We proceed with an asymptotic  version of Theorem \ref{bourin}. To this aim several auxiliary lemmas will be needed.
First we prove a slight generalization of \cite[Lemma 6.1]{MullerT} where we replace the element $(0,\dots, 0)$ by $(\la,\la^2,\dots,\la^n), |\la|<1,$
and choose a unit vector $x \in H$ more carefully.
The next statement can be considered as a variant of the main result, Theorem \ref{maint} below, for a single vector.
\begin{lemma}\label{lemmahamdan}
Let $T\in B(H)$ be such that $T^n\to 0$ in the weak operator topology, and let $\la \in \mathbb C, |\la|<1.$ Suppose that
\begin{equation}\label{wen}
(\la,\la^2,\dots,\la^n)\in W_\infty(T,\dots,T^n)
\end{equation}
for all $n\in \NN.$
Let $A\subset H$ be a finite set, $\e>0$ and $M\subset H$ be a subspace of a finite codimension. Then there exists a unit vector $x \in M\cap A^\perp$ such that
$$
\sup_{n\ge 1}|\langle (T^n-\lambda^n) x,x\rangle|\le\e,\quad
\sup_{n\ge 1}|\langle T^nx,a\rangle|\le\e\quad\hbox{and}\quad
\sup_{n\ge 1}|\langle T^{*n}x,a\rangle|\le\e
$$
for all $a\in A.$
\end{lemma}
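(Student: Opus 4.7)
The proof exploits the strong form of the hypothesis -- that $(\la,\la^2,\dots,\la^n)\in W_\infty(T,T^2,\dots,T^n)$ holds for \emph{every} $n$ -- to secure exact matching $\langle T^k x, x\rangle = \la^k$ up to arbitrarily large cutoffs, and combines it with the decay coming from $T^n\to 0$ in the weak operator topology to handle the tail.

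By Banach--Steinhaus, $C_0 := \sup_{n\ge 1}\|T^n\| < \infty$. Since $|\la|<1$, fix $N_0\in\NN$ with $|\la|^n < \e/2$ for all $n > N_0$. Apply Proposition \ref{infinite}(iii) to the tuple $(T,T^2,\dots,T^{N_0})$ and the finite-codimensional subspace
\[
M_0 := M \cap A^\perp \cap \bigl\{T^k a,\ T^{*k}a : 1 \le k \le N_0,\ a\in A\bigr\}^\perp
\]
to obtain a unit vector $y \in M_0$ satisfying $\langle T^k y,y\rangle = \la^k$ for $k=1,\dots,N_0$ and, automatically from $y\in M_0$, $\langle T^k y,a\rangle = 0 = \langle T^{*k} y,a\rangle$ for $k\le N_0$ and $a\in A$. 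Thus the three bounds of the lemma are satisfied trivially for $k\le N_0$, and since $|\la|^k < \e/2$ for $k > N_0$ it remains only to show that $|\langle T^k y, y\rangle|$, $|\langle T^k y, a\rangle|$ and $|\langle T^{*k} y, a\rangle|$ are all $\le\e/2$ for $k>N_0$.

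For any fixed $y$ each of these three sequences tends to $0$ as $k\to\infty$ (by the weak operator convergence $T^k\to 0$, which also forces $T^{*k}\to 0$ weakly), hence drops below $\e/2$ past some cutoff $K_y$; the delicate point is the intermediate range $(N_0, K_y]$. To overcome this, $y$ is not chosen naively but built as the normalized average $y = J^{-1/2}(e_1+\dots+e_J)$ of orthonormal vectors $e_1,\dots,e_J\in M_0$, constructed inductively: having chosen $e_1,\dots,e_{j-1}$ with individual weak-decay cutoffs $K_{e_1},\dots,K_{e_{j-1}}$, one applies Proposition \ref{infinite}(iii) with an enlarged matching cutoff $m_j \ge \max_{i<j} K_{e_i}$ inside the finite-codimensional subspace of $M_0$ further restricted by orthogonality to the finitely many vectors $\{T^k e_i,\ T^{*k}e_i : i<j,\ k\le m_j\}$. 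A bookkeeping of the matrix entries $\langle T^k e_i, e_l\rangle$ -- combining exact matching for $k\le m_{\min(i,l)}$, the built-in cross-orthogonalities, and the weak decay of each individual $\langle T^k e_i, e_l\rangle$ as $k\to\infty$ -- shows that for $J$ sufficiently large all three required suprema fall below $\e$.

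The principal obstacle is the simultaneous control of the three quantities on the whole range $k>N_0$: the "middle" indices, where neither the exact $W_\infty$-matching nor the weak-operator decay alone is decisive, are tamed only through the $J^{-1/2}$ smoothing provided by the average together with the careful choice of the increasing sequence of cutoffs $m_j$ and the finite-codim orthogonality conditions imposed at each step.
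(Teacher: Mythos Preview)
Your plan is correct and essentially identical to the paper's proof: both build $x$ as the normalized average $s^{-1/2}\sum_{k=1}^{s}u_k$ of $s$ (your $J$) orthonormal vectors in $M\cap A^\perp$, chosen inductively via Proposition~\ref{infinite}(iii) with increasing matching cutoffs $n_r$ (your $m_j$) and cross-orthogonality to the finitely many $T^k u_i,\,T^{*k}u_i,\,T^k a,\,T^{*k}a$, and both split the estimate into the same three regimes (exact matching for small $n$, the middle range controlled by the $s^{-1/2}$ factor together with power-boundedness, and the tail by weak decay). The paper simply makes the choice of $s$ explicit ($s>25K^2\e^{-2}$) and carries out the ``bookkeeping'' you outline.
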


\begin{proof}
Clearly $T$ is power bounded by the uniform boundedness principle. Let $K=\sup\{\|T^n\|:n=0,1,\dots\}$.
It is apparent that also $T^{*n}\to 0$ in the weak operator topology.
Without loss of generality we may assume that $\max\{\|a\|:a\in A\}\le 1.$

Choose $s\in\NN$ such that $s>25K^2\e^{-2}$, and find $n_0\in\NN$ such that $|\la|^{n_0}<\frac{\e}{5}$.

We construct unit vectors $u_1, u_2,\dots,u_s\in M$ and positive integers $n_0<n_1<\cdots<n_s$ in the following way:
Fix a unit vector $u_1 \in M,$ let $1\le r\le s-1$ and suppose that the unit vectors $u_1,\dots,u_r\in M$ and numbers $n_1<\cdots<n_r$ have already been constructed.

By Proposition \ref{essential}, there exists a unit vector $u_{r+1}\in M$ such that
$$
u_{r+1}\perp\{T^nu_k, T^{*n}u_k,T^na,T^{*n}a:\, 0\le n\le n_r, 1\le k\le r, a\in A\}
$$
and
$$
\langle T^nu_{r+1},u_{r+1}\rangle=\la^n, \qquad 1\le n\le n_r.
$$
Find $n_{r+1}>n_r$ such that
\begin{align*}
|\langle T^nu_{r+1},u_k\rangle|<&\frac{\e}{5s},\\
|\langle T^{*n}u_{r+1},u_k\rangle|<&\frac{\e}{5s},\\
|\langle T^nu_{r+1},a\rangle|<&\frac{\e}{5s},
\end{align*}
and
$$
|\langle T^{*n}u_{r+1},a\rangle|<\frac{\e}{5s}
$$
for all $n\ge n_{r+1}$, $1\le k\le r+1$ and $a\in A$.

Let $u_1,\dots,u_s$ and $n_0,\dots,n_s$ be constructed in this way.
Set $$x=\frac{1}{\sqrt{s}}\sum_{k=1}^s u_k.$$
Clearly $x\in M.$ Moreover, $\|x\|=1$ since the vectors $u_k$ are orthonormal.

For $n> n_s$ we have
$$
\bigl|\langle (T^n-\la^n)x,x\rangle\bigr|\le
s^{-1}\sum_{k,k'=1}^s|\langle T^nu_k,u_{k'}\rangle|+|\la|^n\le
s^{-1}s^2\frac{\e}{5s}+\frac{\e}{5}<\e.
$$

Let $0\le r\le s-1$ and $n_r<n\le n_{r+1}$. Then
\begin{align*}
\bigl|\langle (T^n-\la^n)x,x\rangle\bigr|&\le
|\langle T^n x,x\rangle|+\frac{\e}{5}\\
&\le
s^{-1}\sum_{k,k'=1}^{r}|\langle T^nu_k,u_{k'}\rangle| +
s^{-1}\sum_{k=1}^{r+1}|\langle T^nu_{r+1},u_{k}\rangle|\\
&+s^{-1}\sum_{k=1}^{r}|\langle T^nu_k,u_{r+1}\rangle|
+
s^{-1}\sum_{k=r+2}^{s}|\langle T^n u_k,u_{k}\rangle|\\
&+
s^{-1}\sum_{1\le k,k'\le s, k\ne k'\atop \max\{k,k'\}\ge r+2}|\langle T^nu_k,u_{k'}\rangle|+\frac{\e}{5},
\end{align*}
where the last sum is equal to $0$ by the construction.
So
\begin{align*}
\bigl|\langle (T^n-\la^n)x,x\rangle\bigr|\le&
s^{-1}r^2\frac{\e}{5s}+
s^{-1}\|T^nu_{r+1}\|\cdot\Bigl\|\sum_{k=1}^{r+1}u_k\Bigr\|\\
+&
s^{-1}\|T^{*n}u_{r+1}\|\cdot\Bigl\|\sum_{k=1}^{r}u_k\Bigr\|
+s^{-1}(s-r-1)|\la|^n+\frac{\e}{5}\\
\le&
\frac{\e}{5}
+s^{-1}K\sqrt{r+1}+s^{-1}K\sqrt{r}+\frac{\e}{5}+\frac{\e}{5}\\
\le&\e.
\end{align*}

Let $1\le n\le n_0$. Then
\begin{align*}
&\bigl\langle (T^n-\la^n)x,x\bigr\rangle\\
=&
s^{-1}\sum_{k=1}^s\langle (T^n-\la^n)u_k,u_k\rangle
+s^{-1}\sum_{1\le k,k'\le s, k\ne k'}\langle (T^n-\la^n)u_k,u_{k'}\rangle=0.
\end{align*}

Hence $$\sup_{n\ge 1}|\langle (T^n-\la^n)x,x\rangle|\le \e.$$

Let $a\in A$. For $n\ge n_{s}$ we have
$$
|\langle T^nx,a\rangle|\le
\frac{1}{\sqrt{s}}\sum_{k=1}^s|\langle T^nu_k,a\rangle|\le
\frac{1}{\sqrt{s}}\cdot s\cdot\frac{\e}{5s}<\e.
$$
Let $0\le r\le s-1$ and $n_r\le n<n_{r+1}$. Then
\begin{align*}
|\langle T^nx,a\rangle|&\le
\frac{1}{\sqrt{s}}\sum_{k=1}^r|\langle T^nu_k,a\rangle|
+\frac{1}{\sqrt{s}}|\langle T^nu_{r+1},a\rangle|
+\frac{1}{\sqrt{s}}\sum_{k=r+2}^s|\langle T^nu_k,a\rangle|\\
&\le
\frac{1}{\sqrt{s}}\cdot r\cdot\frac{\e}{5s}+
\frac{1}{\sqrt{s}}\cdot K+0<\e.
\end{align*}
Finally, for $1\le n\le n_0$ we have
$$
\langle T^nx,a\rangle= s^{-1}\sum_{k=1}^s\langle T^nu_k,a\rangle=0.
$$
Thus $$\sup_{n\ge 1}|\langle T^nx,a\rangle|\le\e$$ for all $a\in A$.

The property $\sup_{n\ge 1}|\langle T^{*n}x,a\rangle|\le\e$ for all $a \in A$ can be proved similarly.

\end{proof}

The previous lemma enables us to prove the main result in a particular situation when  the compression of powers a bounded operator is approximated
 by powers
of any strictly contractive diagonal operator.
\begin{lemma}\label{diagonal}
Let $T\in B(H)$ be such that $T^n\to 0$ in the weak operator topology and  $\sigma(T)\supset\mathbb T$. Let $(\la_k)_{k \ge 1}\subset\mathbb D$, $\sup_{k\ge 1}|\la_k|<1,$ and $\e>0$. Then there exists an orthonormal sequence $(e_k)_{k \ge 1}$ in $H$ such that
$$
\sup_{n\ge 1}\|(T^n)_L-D^n\|\le\e
$$
$$
\hbox{and}\quad\lim_{n\to\infty}\|(T^n)_L-D^n\|=0,
$$
where $L=\bigvee_{k=1}^\infty e_k$ and $D\in B(L)$ is the diagonal operator defined by $De_k=\la_k e_k,\quad k\in\NN$.

\end{lemma}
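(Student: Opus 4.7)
The strategy is to construct the orthonormal basis $(e_k)$ of $L$ inductively via iterated applications of Lemma \ref{lemmahamdan}, and then control the matrix of $(T^n)_L - D^n$ in this basis by Schur-type estimates. First, note that Lemma \ref{lemmahamdan} is applicable at each $\la_k$: since $\sigma(T)\supset\TT$, monotonicity of the polynomial hull gives $\overline\DD=\widehat\TT\subset\hat\sigma(T)$, so $\DD\subset\Int\hat\sigma(T)$; Theorem \ref{lambdawe} combined with \eqref{einf} then places $(\la,\la^2,\dots,\la^n)\in W_\infty(T,T^2,\dots,T^n)$ for every $\la\in\DD$ and $n\in\NN$, which is precisely the hypothesis \eqref{wen} of Lemma \ref{lemmahamdan}.

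Next, fix a decreasing sequence $\e_k>0$ (concretely, $\e_k=\e/2^{k+2}$ suffices) and build $(e_k)$ inductively: at step $k$, apply Lemma \ref{lemmahamdan} with $\la=\la_k$, $A=\{e_1,\dots,e_{k-1}\}$, $M=H$, and tolerance $\e_k$. This produces a unit vector $e_k$ orthogonal to $e_1,\dots,e_{k-1}$ with
$$
\sup_{n\ge 1}|\langle T^n e_k, e_k\rangle - \la_k^n|\le\e_k,\quad
\sup_{n\ge 1}|\langle T^n e_k, e_j\rangle|\le\e_k,\quad
\sup_{n\ge 1}|\langle T^n e_j, e_k\rangle|\le\e_k
$$
for every $j<k$, where the last inequality follows from the $T^{*n}$-estimate in Lemma \ref{lemmahamdan} after taking complex conjugates. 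Setting $L=\bigvee_k e_k$ and $De_k=\la_k e_k$, the matrix $b_{ij}(n)$ of $B_n:=(T^n)_L-D^n$ in the basis $(e_k)$ satisfies $|b_{ii}(n)|\le\e_i$ and $|b_{ij}(n)|\le\e_{\max(i,j)}$ for $i\ne j$, uniformly in $n$.

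The uniform bound $\sup_n\|B_n\|\le\e$ follows by splitting $B_n$ into its diagonal part $E_n$ (with $\|E_n\|\le\sup_i\e_i$) and off-diagonal part $F_n$, then applying Schur's test to $F_n$ with the common row/column bound $\sum_{j\ne i}\e_{\max(i,j)}=(i-1)\e_i+\sum_{j>i}\e_j$; the choice $\e_k=\e/2^{k+2}$ keeps both contributions $\le\e/2$. For the asymptotic statement, given $\eta>0$, choose $N$ so large that the Schur estimate applied to the tail $B_n-P_NB_nP_N$, where $P_N$ is the orthogonal projection in $L$ onto $\bigvee_{k\le N}e_k$, is less than $\eta/2$ uniformly in $n$; this is possible because the estimate depends only on the tail smallness of the $\e_k$ and is independent of $n$. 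On the finite block $P_NB_nP_N$, every entry tends to $0$ as $n\to\infty$ because $T^n\to 0$ weakly and $\la_k^n\to 0$, hence the block norm is $<\eta/2$ for all sufficiently large $n$. Combining the two bounds yields $\limsup_n\|B_n\|\le\eta$, and since $\eta$ was arbitrary, $\|B_n\|\to 0$.

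The principal obstacle is the bookkeeping in the Schur estimates, which must simultaneously deliver a uniform-in-$n$ bound $\le\e$ \emph{and} a tail that shrinks with $N$ independently of $n$. The key point is that Lemma \ref{lemmahamdan} furnishes uniform-in-$n$ smallness of individual matrix entries, so the tail contribution can be controlled uniformly, while the weak-operator convergence $T^n\to 0$ forces the finite block to vanish entry-wise as $n\to\infty$.
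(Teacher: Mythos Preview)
Your argument is correct and is in fact cleaner than the paper's. Both proofs build $(e_k)$ inductively from Lemma \ref{lemmahamdan}, but the paper additionally forces $e_{s+1}\perp\{T^ne_k,T^{*n}e_k:0\le n\le n_s,\,1\le k\le s\}$ and introduces an auxiliary sequence of thresholds $n_0<n_1<\cdots$; it then estimates $\langle(T^n-D^n)x,x\rangle$ by a case analysis according to which interval $(n_s,n_{s+1}]$ contains $n$, exploiting the exact zeros coming from those extra orthogonalities. You bypass all of this: the uniform-in-$n$ entry bounds $|b_{ij}(n)|\le\e_{\max(i,j)}$ supplied by Lemma \ref{lemmahamdan} alone feed directly into a Schur test, and the limit $\|B_n\|\to 0$ follows from the standard ``finite block plus uniformly small tail'' decomposition, using only $T^n\to 0$ weakly and $|\la_k|<1$ on the $N\times N$ corner. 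The paper's route yields a slightly more explicit quantitative decay $\|(T^n)_L-D^n\|\le\e/2^s$ on the $s$-th interval, whereas your route trades this for a shorter and more transparent proof. Both are valid; yours is arguably the more natural packaging of the information Lemma \ref{lemmahamdan} already provides.
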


\begin{proof}
Let $r=\sup_k |\la_k|<1$. By Theorem \ref{lambdawe} and \eqref{einf}, $$(\la_k,\la_k^2,\dots,\la_k^n)\in W_\infty(T,T^2,\dots,T^n)$$ for all $n,k\in\NN$.
Let $n_0\in\NN$ satisfy $r^{n_0}<\frac{\e}{16}$.
We construct vectors $e_1,e_2,\dots$ and positive integers $n_1< \cdots$ also inductively.

Fix a unit vector $e_1 \in H,$ suppose that $s\ge 1$ and orthonormal vectors $e_1,\dots,e_s\in H$ and numbers $n_0<n_1<\cdots<n_s$ have already been constructed.
Using Lemma \ref{lemmahamdan}, find a unit vector $e_{s+1}\in H$ such that
\begin{align*}
e_{s+1}&\perp \{T^ne_k, T^{*n}e_k: 0\le n\le n_s, 1\le k\le s\},\\
\sup_{n\ge 1}&\bigl|\langle (T^n-\la_{s+1}^n)e_{s+1},e_{s+1}\rangle\bigr|<\frac{\e}{2^{s+4}(s+1)},\\
\sup_{n\ge 1}&|\langle T^{n}e_{s+1},e_{k}\rangle|<\frac{\e}{2^{s+4}(s+1)},\qquad 1\le k\le s,
\end{align*}
and
$$
\sup_{n\ge 1}|\langle T^{*n}e_{s+1},e_{k}\rangle|<\frac{\e}{2^{s+4}(s+1)},\qquad 1\le k\le s.
$$
Find $n_{s+1}>n_s$ such that
$
r^{n_{s+1}}<\frac{\e}{2^{s+5}}
$
and
$$
|\langle T^n e_k,e_{k'}\rangle|\le\frac{\e}{2^{s+4}(s+1)}, \qquad 1\le k,k'\le s+1, n\ge n_{s+1}.
$$

Let $L=\bigvee_{k=1}^\infty e_k$ and let the diagonal operator $D:L\to L$ be defined by $De_k=\la_ke_k, \quad k\in\NN$.

Let $x\in L$, $\|x\|=1$. Then $x=\sum_{k\ge 1} \al_ke_k$ where $\sum_{k \ge 1} |\al_k|^2=1$.
Note that $\sum_{k=1}^s|\al_k|\le\sqrt s$ for all $s\in\NN$.

For $1\le n\le n_0$ we have
\begin{align*}
|\langle (T^n-D^n)x,x\rangle|\le&
\sum_{k,k'=1}^\infty|\al_k\bar\al_{k'}|\cdot |\langle (T^n-D^n)e_k,e_{k'}\rangle|\\
=&\sum_{k=1}^\infty|\al_k|^2 \cdot |\langle (T^n-\la_k^n)e_k,e_k\rangle|\le
\frac{\e}{2}.
\end{align*}
Thus $\|(T^n)_L-D^n\|\le\e$.

Let $s\ge 0$ and $n_s< n\le n_{s+1}$. Then
\begin{align*}
|\langle (T^n-D^n)x,x\rangle|
\le& |\langle T^nx,x\rangle|+r^n\\
\le&
\Bigl|\sum_{k,k'=1}^\infty\al_k\bar\al_{k'} \langle T^ne_k,e_{k'}\rangle\Bigr|+\frac{\e}{2^{s+4}}\\
\le&\sum_{k,k'=1}^s|\al_k\bar\al_{k'}|\cdot |\langle T^ne_k,e_{k'}\rangle|
+
\sum_{k=1}^{s}|\al_{s+1}\bar\al_{k}|\cdot |\langle T^ne_{s+1},e_{k}\rangle|\\
+&
\sum_{k=1}^{s}|\al_k\bar\al_{s+1}|\cdot |\langle T^ne_k,e_{s+1}\rangle|
+\sum_{k=s+1}^\infty|\al_k|^2 \cdot|\langle T^ne_k,e_k\rangle|\\
+&\sum_{k\ne k',\max\{k,k'\}\ge s+2}|\al_k\bar\al_{k'}|\cdot |\langle T^ne_k,e_{k'}\rangle|+\frac{\e}{2^{s+4}}\\
\le& \frac{s\e}{2^{s+3}s}+
\frac{\e\sqrt{s}}{2^{s+4}(s+1)}
+\frac{\e\sqrt{s}}{2^{s+4}(s+1)}\\
+&\sum_{k=s+1}^\infty|\al_k|^2\cdot\bigl(|\langle (T^n-D^n)e_k,e_k\rangle|+r^n\bigr)+0+\frac{\e}{2^{s+4}}\\
\le&
\frac{\e}{2^{s+3}}+\frac{\e}{2^{s+4}}+\frac{\e}{2^{s+4}}
+\bigl(\frac{\e}{2^{s+4}}+\frac{\e}{2^{s+4}}\bigr)+\frac{\e}{2^{s+4}}\\
<&\frac{\e}{2^{s+1}}.
\end{align*}
Thus
$$
\sup\bigl\{|\langle(T^n-D^n)x,x\rangle|:x\in L, \|x\|=1\bigr\}\le\frac{\e}{2^{s+1}},
$$
and so
$\|(T^n)_L-D^n\|\le \frac{\e}{2^{s}}$.

Hence $$\sup_{n\ge 1}\|(T^n)_L-D^n\|\le\e \qquad \text{and}
\qquad \lim_{n\to\infty}\|(T^n)_L-D^n\|=0.$$
\end{proof}

 Now using dilation theory, we can replace a strictly contractive diagonal operator $D$ in Lemma \ref{diagonal} by any strict contraction unitarily equivalent to a given one.

\begin{theorem}\label{maint}
Let $T\in B(H)$ be such that $T^n\to 0$ in the weak operator topology, and let  $\si(T)\supset\TT$. Let $\tilde C\in B(H)$ and $\|\tilde C\|<1$. Then for every $\e>0$ there exists a subspace $L\subset H$ and $C\in B(L)$ unitarily equivalent to $\tilde C$ such that
$$
\sup_{n \ge 1}\|(T^n)_L- C^n\|\le \e
$$
and
$$
\lim_{n\to\infty}\|(T^n)_L- C^n\|=0.
$$
\end{theorem}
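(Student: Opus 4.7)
The plan is to reduce the general strict contraction $\tilde C$ to the diagonal case already handled by Lemma \ref{diagonal}, via a combination of Sz.-Nagy dilation and the Weyl--von Neumann theorem. Setting $r:=\|\tilde C\|<1$, Sz.-Nagy's theorem furnishes a unitary $V$ on a separable Hilbert space $K\supset H_0$ (where $H_0$ denotes the space on which $\tilde C$ acts) satisfying $(\tilde C/r)^n=P_{H_0}V^n|_{H_0}$ for all $n\ge 0$; hence, setting $N:=rV$, which is a normal operator with spectrum in $r\TT$ and $\|N\|=r$, one obtains $\tilde C^n=P_{H_0}N^n|_{H_0}$ for all $n\ge 0$.

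Next, the Berg--Weyl--von Neumann theorem applied to the normal operator $N$ on the separable space $K$ produces, for any prescribed $\eta>0$, a decomposition $N=\tilde D+R$ with $\tilde D$ diagonal in some orthonormal basis $(f_k)_{k\ge 1}$ of $K$, diagonal entries $(\la_k)_{k\ge 1}$ of modulus $r$, and $R$ compact with $\|R\|\le\eta$. The telescoping identity $N^n-\tilde D^n=\sum_{j=0}^{n-1}N^jR\tilde D^{n-1-j}$ then yields
$$
\|N^n-\tilde D^n\|\le nr^{n-1}\eta,
$$
which is both uniformly small in $n$ (since $r<1$) and vanishing as $n\to\infty$.

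Since $\sup_k|\la_k|=r<1$, Lemma \ref{diagonal} delivers an orthonormal sequence $(e_k)_{k\ge 1}\subset H$ with $L':=\bigvee_{k\ge 1}e_k$, and a diagonal operator $D'\in B(L')$ defined by $D'e_k=\la_k e_k$, such that $\sup_n\|(T^n)_{L'}-(D')^n\|\le\eta'$ and these norms tend to zero. The unitary $U\colon K\to L'$ sending $f_k\mapsto e_k$ intertwines $\tilde D$ with $D'$; put $L:=U(H_0)\subset L'$ and $C:=(U|_{H_0})\tilde C (U|_{H_0})^{-1}\in B(L)$, which is by construction unitarily equivalent to $\tilde C$. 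Exploiting $P_L=UP_{H_0}U^*$ and the identity $(T^n)_L=((T^n)_{L'})_L$ valid since $L\subset L'$, a short computation gives
$$
\|((D')^n)_L-C^n\|=\|P_{H_0}\tilde D^nP_{H_0}|_{H_0}-\tilde C^n\|\le\|\tilde D^n-N^n\|\le nr^{n-1}\eta,
$$
and hence
$$
\|(T^n)_L-C^n\|\le\|(T^n)_{L'}-(D')^n\|+\|((D')^n)_L-C^n\|\le\eta'+nr^{n-1}\eta,
$$
which is at most $\e$ uniformly and tends to $0$ as $n\to\infty$, once $\eta,\eta'$ are chosen sufficiently small.

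The main technical point is the balancing of the two error sources: the Weyl--von Neumann error $\eta$ is amplified by the factor $nr^{n-1}$ from the telescoping, and it is precisely the inequality $r<1$ that keeps this factor uniformly bounded and forces it to vanish --- this is where the strict contraction hypothesis is essential. The remaining step is the coherent bookkeeping of the unitaries and subspaces linking $K$, $L'$, $H_0$, and $L$, which is routine once the setup above is in place.
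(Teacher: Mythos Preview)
Your argument is correct and follows the same architecture as the paper's proof: dilate $\tilde C$ to a scalar multiple of a unitary, approximate that by a diagonal operator, invoke Lemma~\ref{diagonal}, and then compress back via the identity $(T^n)_L=\bigl((T^n)_{L'}\bigr)_L$ for $L\subset L'$. The only substantive difference is in how the diagonalization is carried out. The paper takes $c$ with $\|\tilde C\|<c<1$, dilates $\tilde C/c$ to the bilateral shift $\tilde U$ of infinite multiplicity, and then approximates $\tilde U$ \emph{explicitly} by a step function on $k$ arcs of $\TT$ via the spectral theorem, obtaining $\|(c\tilde U)^n-(c\tilde D)^n\|\le 2\pi nc^n/k$; you instead take $r=\|\tilde C\|$, dilate $\tilde C/r$ to an arbitrary unitary $V$, and appeal to the Berg--Weyl--von Neumann theorem to write $rV=\tilde D+R$ with $\|R\|\le\eta$, yielding the telescoping bound $nr^{n-1}\eta$. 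The paper's route is more elementary (no Weyl--von Neumann needed, only the spectral theorem for unitaries), while yours is more automatic and does not require identifying the dilation as a specific shift. One small point: setting $r=\|\tilde C\|$ makes $\tilde C/r$ undefined when $\tilde C=0$, so that trivial case needs a separate one-line treatment; the paper avoids this by choosing $c>\|\tilde C\|$.
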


\begin{proof}
Let $\|\tilde C\|<c<1$. Then $\tilde C$ has the power dilation $c\tilde U$ on a Hilbert space $\tilde K$, where $\tilde U\in B(\tilde K)$ is the bilateral shift of infinite multiplicity. So it is sufficient to show the statement for the operator $c\tilde U$.

Find $k\in\NN$ such that $\sup_{n \ge 1} nc^n< \frac{k\e}{4\pi}$.
For $0\le j\le k-1$ let $E_j\subset \tilde K$ be the spectral subspace of $\tilde U$ corresponding to the set $\{e^{2\pi i t}: j/k\le t<(j+1)/k\}$. Consider the operator $\tilde D\in B(\tilde K)$ defined by $\tilde Dx=e^{2\pi ij/k}x, x\in E_j$. Then $\|\tilde U-\tilde D\|\le \frac{2\pi}{k}$ and similarly,
$\|\tilde U^n-\tilde D^n\|\le \frac{2\pi n}{k}$ for all $k$ and $n$ from $\mathbb N.$ Thus
$$
\sup_{n \ge 1} \|(c\tilde U)^n-(c\tilde D)^n\|\le\sup_{n \ge 1} \frac{2\pi nc^n}{k}\le \e/2
$$
and
$$
\lim_{n\to\infty}\|(c\tilde U)^n-(c\tilde D)^n\|=0.
$$
Moreover, $c\tilde D$ is a diagonal operator of the form considered in Lemma \ref{diagonal}. Hence, by Lemma \ref{diagonal},
 there exists a subspace $K\subset H$ and a unitarily equivalent copy  $c D\in B(K)$ of $c\tilde D$ such that
$$
\sup_{n \ge 1}\bigl\|(T^n)_K-(c D)^n\bigr\|\le \e/2
$$
and
$$
\lim_{n\to\infty}\bigl\|(T^n)_K-(c D)^n\bigr\|=0.
$$
Thus
$$
\sup_{n \ge 1}\bigl\|(T^n)_{K}-(cU)^n\bigr\|\le \e
$$
and
$$
\lim_{n\to\infty}\bigl\|(T^n)_{K}-(cU)^n\bigr\|=0,
$$
where $U\in B(K)$ is unitarily equivalent to  $\tilde U$. Hence there exits a subspace $L\subset K$ and a unitarily equivalent  copy
$C\in B(L)$ of $\tilde C$
such that
$$
\sup_{n \ge 1} \|(T^n)_{L}- C^n\|\le \e\qquad\hbox{and}\qquad
\lim_{n\to\infty} \|(T^n)_{L}- C^n\|=0.
$$

\end{proof}

\end{document}